\newtheorem{lemma}{Lemma}
\newtheorem{theorem}{Theorem}
\theoremstyle{definition}
\newtheorem{example}{Example}
\newcommand{\R}{{\mathbb R}}
\newcommand{\Z}{{\mathbb Z}}
\newcommand{\eqdef}{\overset{\text{def}}{=}}
\newcommand{\E}{\mathbb E}
\newcommand{\QNl}{\widehat{Q}^\varepsilon}
\newcommand{\DN}{D^\varepsilon}
\title{Multilevel Monte Carlo for stochastic differential equations with small noise}
\author{
David F. Anderson\thanks{Department of Mathematics, University of
  Wisconsin, Madison, USA.  anderson@math.wisc.edu, grant support from NSF-DMS-1318832 and Army Research Office grant W911NF-14-1-0401.},
\and
Desmond J. Higham\thanks{Department of Mathematics and
 Statistics, University of Strathclyde, UK.
d.j.higham@maths.strath.ac.uk, supported by a Royal Society/Wolfson Research Merit Award.},
  \and
 Yu Sun\thanks{Department of Mathematics, University of
  Wisconsin, Madison, USA.  ysun@math.wisc.edu, grant support from NSF-DMS-1318832.}
}
\begin{document}

\maketitle

\begin{abstract}
	
	We consider the problem of numerically estimating expectations of solutions to stochastic differential equations driven by Brownian motions in the commonly occurring small noise regime.  We consider (i)  standard Monte Carlo methods combined with numerical discretization algorithms tailored to the small noise setting, and (ii) a multilevel Monte Carlo method combined with a standard Euler-Maruyama implementation.   Under the assumptions we make on the underlying model, the multilevel method combined with Euler-Maruyama is often found to  be the most efficient option.  Moreover, under a wide range of scalings the multilevel method is found to give the same asymptotic complexity that would arise in the idealized case where we have access to exact samples of the required distribution at a cost of $O(1)$ per sample.  A key step in our analysis is to analyze the variance between two coupled paths directly, as opposed to their $L^2$ distance.  Careful simulations are provided to illustrate the asymptotic results. 
\end{abstract}

\section{Introduction}
\label{sec:intro}

In many modeling and simulation contexts it has proved useful to 
parametrize the diffusion 
coefficient of a stochastic differential equation (SDE)
and study the small noise case.
In particular, 
diffusion and linear noise approximations to jump processes 
arise 
naturally 
under the \lq\lq thermodynamic limit\rq\rq\ 
in biochemistry and cell biology 
\cite{AndKurtz2011,AK2015,G91,G2000}.
Researchers in econometrics and finance may represent market microstructure noise
as small scale diffusion,
 and the task of calibrating model parameters then gives rise to 
small noise SDE simulations;
 see, for example, 
\cite{CMD15,zhmysaha05}, with a more general overview in 
\cite{PS07}.
In computational fluid dynamics, 
small noise SDEs are used as a means to 
incorporate 
thermal fluctuations  
into traditional
models in the 
\lq \lq weak fluctuation regime\rq\rq\ 
\cite[Section~V]{DSGVD14}.
In several other application areas, 
including 
ecology,
circuit simulation,
microbiology, 
neuroscience and population dynamics, 
\cite{BBRT13,denk2007modelling,MMR02,OTN11,SM13,1236910,SMP10,TRW03},
the small noise limit is of interest 
from the perspective of understanding 
properties of physical models. 
The small noise regime has also been investigated as a means to 
validate 
conclusions drawn from analytical or heuristic arguments, especially with regard to 
long-time stability properties 
\cite{IDW98,PRJ01}.

From the perspective of computer simulation, 
many customized numerical methods have been developed for small noise SDEs
with the aim of improving efficiency by exploiting the structure; see \cite[Chapter~3]{MT04}
for an overview.
In this work, we focus on the problem of numerically estimating expectations 
of solutions to small noise SDEs via Monte Carlo and multilevel Monte Carlo methods.  
In particular,  we show that under a range of scalings the standard Euler--Maruyama method combined with the usual multilevel Monte Carlo method of Giles \cite{Giles2008} yields the same complexity that would arise if we had access to exact samples of the
required distribution at a cost of $O(1)$ per sample. So, in this well-defined setting, 
customized methods are not necessary. 

Let 
$(\Omega,{\mathcal F}, \{{\mathcal F}_t\}_{t\ge 0}, P)$ be a filtered probability space satisfying the usual conditions; i.e.~the filtration is complete and right-continuous. Let $W(t)=(W_1(t),W_2(t),\ldots,W_m(t))$ be an $m$-dimensional
standard Wiener processes under $\{{\mathcal F}_t\}_{t \ge0}$.
Let $\varepsilon\in (0,1)$ be a small parameter and let $D^\varepsilon$ be the solution to the following
It\^o SDE,
\begin{equation}\label{eq:2408957248907}
	D^\varepsilon(t)=D(0)+\int_{0}^{t}\mu(D^\varepsilon(s))ds+\varepsilon \int_{0}^{t}\sigma(D^\varepsilon(s))dW(s),
\end{equation}
where $\mu:\R^d \to \R^d$ and $\sigma:\R^d\to \R^{d\times m } $ are  continuous functions satisfying further assumptions detailed below.  

Let $f:\R^d\to \R$ have bounded first and second partial derivatives and let $T >0 $ be a fixed positive number. We are interested in the problem of numerically estimating $\E[f(D^\varepsilon(T))]$ to an accuracy of $\delta >0$ in the sense of confidence intervals.  In particular, we study the computational complexity required to solve this problem utilizing both (i) standard Monte Carlo methods combined with discretization methods tailored to the small noise setting \cite{MT97, milstein1997numerical},  and (ii) multilevel Monte Carlo methods combined with Euler-Maruyama \cite{Giles2008}.  We will show that in the small noise setting the $L^2$ bounds on the difference between exact and approximate processes that are already in the literature \cite{MT97} do not provide sharp estimates for the variance between two coupled paths; an analogous 
issue was previously addressed in the jump process setting \cite{AHS2014}.  Our main effort is  therefore directed at  analyzing the variance between two coupled paths in the small noise setting.

To get a feel for the best possible result, we note that in the idealized case where realizations of $f(D^\varepsilon(T))$ could be generated with a single numerical calculation, and if $\textsf{Var}(f(D^\varepsilon(T))) = O(\varepsilon^2)$, then the computational complexity of solving the problem via Monte Carlo would be $O(\varepsilon^2\delta^{-2}+1)$, where the ``+1'' recognizes the fact that at least one realization must be produced.  We show in this work that when $\delta \ge e^{-\frac1\varepsilon}$ the multilevel Monte Carlo method of Giles \cite{Giles2008} combined with a standard implementation of Euler-Maruyama solves the problem with a computational complexity of $O(\varepsilon^2\delta^{-2} + \delta^{-1})$, which is the same as in the idealized case when $\delta \le \varepsilon^2$, and is otherwise equal to the complexity of the standard Euler method applied to an ordinary differential equation.  
We will show that when $\delta < e^{-\frac1\varepsilon}$ the multilevel Monte Carlo method combined with Euler-Maruyama solves the problem with a complexity of $O(\varepsilon^4 \delta^{-2} \log(1/\delta)^2)$.  

We also demonstrate below that when $\delta < \varepsilon^2$, methods customized to the small noise setting combined with standard Monte Carlo  can sometimes be more efficient than the multilevel Monte Carlo method combined with standard Euler-Maruyama.  This occurs because in the regime $\delta < \varepsilon^2$ the majority of the required work falls on accurately computing the drift in \eqref{eq:2408957248907}, and not due to the randomness of the process.

We make the following regularity assumption throughout the manuscript.

\vspace{.125in}

\noindent \textbf{Running assumption.}  \textit{We suppose there are constants $a,b>0$ such that for all $x, y \in \R^d$ the following inequalities hold:
\[
  |\nabla\mu(x)|^2\vee|\nabla^2\mu(x)|^2\le a,
\]
and
    \[
     |\mu(x)-\mu(y)|^2\le a|x-y|^2,\quad |\sigma(x)-\sigma(y)|^2\le b|x-y|^2,
\]
and
\[
 |\mu(x)|^2\le a(1+|x|^2),\quad |\sigma(x)|^2\le b(1+|x|^2).
\]
}

\noindent
 Under the above assumptions, the SDE \eqref{eq:2408957248907} is known to have
a unique strong solution (see, for example, Theorem 3.1 on page 51 in \cite{mao1997stochastic}).    We also note that when these assumptions are violated the multilevel Monte Carlo method may fail,
but the performance can be recovered by modifying the Euler--Maruyama discretization \cite{HJK13}.

\vspace{.125in}

\subsection{Euler-Maruyama and a statement of main mathematical result}
We provide a continuous version of the Euler-Maruyama discretization method.
Let $h>0$ and let $D_h^\varepsilon$ be the  solution to
\begin{equation}\label{eq:48907528}
	D_h^\varepsilon(t) = D(0) + \int_0^t \mu(D_h^\varepsilon(\eta_h(s))) \, ds + \varepsilon \int_0^t \sigma(D_h^\varepsilon(\eta_h(s)))\, dW(s),
\end{equation}
where $\eta_h(s) \eqdef \lfloor s/h\rfloor h$, for $s \ge 0$. It is straightforward to see that the solution to \eqref{eq:48907528} restricted to the set of times $\{0,h,2h,\dots\}$ has the same distribution as the discrete time process generated by the usual Euler-Maruyama method \cite{KloedenPlaten92}.

In order to understand the computational complexity of the multilevel scheme, we need sharp estimates for the variance between two coupled paths.  The following provides such an estimate and is the main theorem provided in this paper. 
The result bounds  the variance between two coupled process;  both are generated via \eqref{eq:48907528}, though they have different time discretization parameters.  See the beginning of section \ref{sec:res} for more details related to the coupling.

\begin{theorem}
  \label{thm:var}
		   Suppose the functions $\mu$ and $\sigma$ satisfy our running assumptions and that $T>0$ and $\varepsilon\in(0,1)$.  Suppose further that $D_{h_\ell}^\varepsilon(t)$ and $D_{h_{\ell-1}}^\varepsilon(t)$ satisfy \eqref{eq:48907528} with time discretization parameters $h_\ell=T\cdot M^{-\ell}$ and $h_{\ell-1}=T\cdot M^{-(\ell-1)}$, respectively, where $M\ge 2$ is a positive integer, and that these two processes are constructed with the same realization of Brownian motions.  Assume that $f:\R^d \to \R$ has continuous second derivative and there exists a constant $C_L$ such that
     \[
       \left\| \frac{\partial f}{\partial x_i} \right\|_\infty \leq C_L \quad and \quad \left\|\frac{\partial^2 f}{\partial x_i\partial x_j}\right\|_\infty\leq C_L\quad for \,\,any\,\,i,j=1,2,...,d.
     \]
	Then, for $\ell \ge 1$,
		\begin{equation}
		\label{eq:main}
			\max_{0\le n\le  M^{\ell-1}}\textsf{Var}(f(D_{h_{\ell}}^\varepsilon(t_n)) - f(D_{h_{\ell-1}}^\varepsilon(t_n)))\leq \bar{C}_1h_{\ell-1}^2\varepsilon^2+\bar{C}_2h_{\ell-1}\varepsilon^4,
		\end{equation}
where $t_n = n\cdot h_{\ell-1}$, and  $\bar{C}_1$ and $\bar{C}_2$ are positive constants only depending on $a,b,d,m,T,D(0)$ and $C_L$.
\end{theorem}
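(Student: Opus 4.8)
The plan is to work with the coupled difference process and to reduce the statement to a sharp bound on its \emph{variance}, since the deterministic discretization bias is exactly the quantity an $L^2$ analysis cannot remove. Write $X=D^\varepsilon_{h_\ell}$, $Y=D^\varepsilon_{h_{\ell-1}}$, $U=X-Y$, and set $h=h_{\ell-1}$ for brevity. From \eqref{eq:48907528} the difference is an It\^o process $dU=\alpha\,dt+\beta\,dW$ with $U(0)=0$, where $\alpha=\mu(X(\eta_{h_\ell}(t)))-\mu(Y(\eta_{h_{\ell-1}}(t)))$ and $\beta=\varepsilon[\sigma(X(\eta_{h_\ell}(t)))-\sigma(Y(\eta_{h_{\ell-1}}(t)))]$. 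First I would record, via discrete and continuous Gronwall arguments based on the linear growth and Lipschitz assumptions, uniform-in-$(h,\varepsilon,\ell)$ moment bounds for $X$ and $Y$, together with the standard strong estimates $\E|U(t)|^2\le C(h^2+\varepsilon^2 h)$ and the one-step freezing bound $\E|X(s)-X(\eta_{h_\ell}(s))|^2\le C(h_\ell^2+\varepsilon^2 h_\ell)$, which are of the type already in \cite{MT97}. The decisive point is that the $O(h^2)$ term is a purely deterministic bias carried by $\E[U]$, so it must be excised; this is precisely why one has to estimate the variance directly rather than $\E|U|^2$.

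To pass from $U$ to $f$, I would use the mean-value form $f(X)-f(Y)=\Psi^\top U$ with $\Psi=\int_0^1\nabla f(Y+\theta U)\,d\theta$, which is bounded by $C_L\sqrt d$, and split $\Psi=\nabla f(D^0)+(\Psi-\nabla f(D^0))$, where $D^0$ solves the noise-free flow $\dot D^0=\mu(D^0)$, $D^0(0)=D(0)$. Since $\nabla f(D^0(t))$ is deterministic, its contribution to $\textsf{Var}(f(X)-f(Y))$ is at most $C\,\textsf{Var}(U)$; the remainder is a product of the gradient increment, of size $O(|Y-D^0|+|U|)$, with $U$, and a careful bookkeeping carried out \emph{at the level of variances} (never second moments) shows it contributes only higher-order terms once the moment bounds and the smallness of $\textsf{Var}(U)$ and of $|Y-D^0|$ are invoked. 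This reduces the theorem to proving $\sup_{t\le T}\textsf{Var}(U(t))\le C(\varepsilon^2 h^2+\varepsilon^4 h)$.

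The engine is the exact variance identity obtained by applying It\^o's formula to $|U-\E U|^2$, namely $\frac{d}{dt}\textsf{Var}(U)=2\,\textsf{Cov}(U,\alpha)+\E|\beta|^2$. The diffusion term is the clean part: Lipschitzness of $\sigma$ and the strong estimate give $\E|\beta|^2\le \varepsilon^2 b\,\E|X(\eta_{h_\ell})-Y(\eta_{h_{\ell-1}})|^2=O(\varepsilon^4 h+\varepsilon^2 h^2)$, which already integrates to the target. The main obstacle is the drift covariance $\textsf{Cov}(U,\alpha)$: the naive bound $\textsf{Cov}(U,\alpha)\le\sqrt{\textsf{Var}(U)\,\textsf{Var}(\alpha)}$ is fatally lossy, because $\textsf{Var}(\alpha)=O(\varepsilon^2 h)$ (it contains the within-cell diffusive fluctuation $\varepsilon\sigma(W(\eta_{h_\ell})-W(\eta_{h_{\ell-1}}))$), and propagating $\sqrt{\textsf{Var}(U)}\cdot O(\varepsilon\sqrt h)$ through Gronwall yields a bound of order $\varepsilon^2 h$, far above the claimed $\varepsilon^4 h$.

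The resolution, and the heart of the argument, is a decorrelation step. Fixing $t$ in a coarse cell $[\tau,\tau+h)$ with $\tau=\eta_{h_{\ell-1}}(t)$, I would decompose $\alpha=\alpha^{\mathrm{sm}}+\alpha^{0}$, where $\alpha^{\mathrm{sm}}$ is $\mathcal F_\tau$-measurable (built from $U(\tau)$ and the conditional-mean within-cell drift) and $\alpha^{0}$ is the conditionally centered within-cell increment, of size $O(\varepsilon\sqrt h)$ and satisfying $\E[\alpha^{0}\mid\mathcal F_\tau]=0$. Conditioning kills the leading correlation, $\textsf{Cov}(U(\tau),\alpha^{0})=0$, so only the within-cell change $U(t)-U(\tau)$, which is $O(\varepsilon^2 h+\varepsilon h^{3/2})$ in $L^2$, correlates with $\alpha^{0}$; Cauchy--Schwarz then gives a contribution $O(\varepsilon^3 h^{3/2}+\varepsilon^2 h^2)$, and the arithmetic--geometric mean inequality $\varepsilon^3 h^{3/2}\le\tfrac12(\varepsilon^4 h+\varepsilon^2 h^2)$ places it inside the target. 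The smooth part obeys $\textsf{Cov}(U,\alpha^{\mathrm{sm}})\le C\sup_{s\le t}\textsf{Var}(U(s))$, which closes the loop. Assembling these estimates into a Gronwall inequality for $v^\ast(t)=\sup_{s\le t}\textsf{Var}(U(s))$ yields $\sup_{t\le T}\textsf{Var}(U(t))=O(\varepsilon^2 h^2+\varepsilon^4 h)$, and the reduction above then gives \eqref{eq:main}. I expect the conditioning/decorrelation step to be the principal difficulty, since it is exactly where the extra factor $\varepsilon^2$ over the $L^2$ theory is purchased; the moment and strong-error inputs are routine.
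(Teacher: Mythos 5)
Your overall architecture is the same as the paper's: reduce to the difference process via a mean-value expansion, set up a Gronwall recursion for the \emph{variance} of $U=D^\varepsilon_{h_\ell}-D^\varepsilon_{h_{\ell-1}}$ rather than its second moment, bound the diffusion contribution by the $L^2$ estimates, and kill the dangerous $O(\varepsilon\sqrt h)$ fluctuation in the drift increment by splitting off a conditionally centered part that decorrelates from $\mathcal F_\tau$-measurable quantities (your $\alpha^0$ is exactly the paper's term $B_k$ in Lemma~\ref{lem:3terms}, and $\textsf{Cov}(U(\tau),B_k)=0$ is the paper's treatment of \eqref{eq:AA5}). However, there is a genuine gap at the step you dispatch in one line: the claim $\textsf{Cov}(U,\alpha^{\mathrm{sm}})\le C\sup_{s\le t}\textsf{Var}(U(s))$. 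The term $\alpha^{\mathrm{sm}}$ contains $\mu(X(\tau))-\mu(Y(\tau))$, and the only bounds your listed inputs provide are second-moment bounds, $\E|\mu(X)-\mu(Y)|^2\le a\,\E|U|^2=O(h^2+\varepsilon^4h)$; feeding the bare $O(h^2)$ into Gronwall returns the bound $O(h^2+\varepsilon^4h)$, i.e.\ nothing better than \eqref{eq:MT}. What is actually needed is $\textsf{Var}(\mu(X(\tau))-\mu(Y(\tau)))\le C\,\textsf{Var}(U(\tau))+O(h^2\varepsilon^2+h\varepsilon^6)$, and this does not follow from Lipschitzness: the paper obtains it (Lemmas~\ref{lemma:third_term} and~\ref{lemma:4_term}) by writing $\mu(X)-\mu(Y)=\rho\cdot U$ with $\rho$ an averaged gradient, showing $\textsf{Var}(\rho)=O(\varepsilon^2)$ because both paths concentrate within $O(\varepsilon)$ in $L^2$ of the \emph{same deterministic Euler trajectory} $z_h$ (Lemma~\ref{lem:var_bound}), showing $|\E U|=O(h+\varepsilon^2\sqrt h)$, and invoking a dedicated product-variance lemma (Lemma~\ref{lem:Var_bound}) for $\textsf{Var}(\rho\cdot U)$. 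You gesture at this mechanism in your reduction from $f$ to $U$, but you never deploy it where it is indispensable — inside the drift covariance — and without it the argument collapses back to the $L^2$ rate.

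Two further instances of the same leak. First, your $L^2$ size for the within-cell increment, $\|U(t)-U(\tau)\|_{2}=O(\varepsilon^2h+\varepsilon h^{3/2})$, omits the $O(h^2)$ accumulated drift bias; with it restored, Cauchy--Schwarz against $\|\alpha^0\|_2=O(\varepsilon\sqrt h)$ produces an $O(\varepsilon h^{5/2})$ term that exceeds $\varepsilon^2h^2+\varepsilon^4h$ whenever $\varepsilon\lesssim\sqrt h$. The fix is to subtract the $\mathcal F_\tau$-conditional mean of $U(t)-U(\tau)$ before applying Cauchy--Schwarz (legitimate since $\E[\alpha^0\mid\mathcal F_\tau]=0$) — another application of the conditioning idea you would need to make explicit. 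Second, you center the gradient at the exact flow $D^0$; since $\E|Y-D^0|^2=O(\varepsilon^2+h^2)$, any second-moment use of this centering again injects a bare $h^2$. The paper deliberately centers at the Euler discretization $z_h$ of the ODE with the \emph{same} step, for which the discrepancy is $O(\varepsilon^2)$ with no $h$-contamination. In short: the skeleton is right and the decorrelation insight is the correct one, but the theorem's extra factor of $\varepsilon^2$ over \eqref{eq:MT} is purchased in several places at once, and your sketch secures it in only one of them.
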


In the context of analyzing the classical mean-square error,   
it was shown by Milstein and Tretyakov in \cite{MT97} that under the same assumptions as in Theorem \ref{thm:var}, 
\begin{equation}\label{eq:MT}
  	 \E[|f(D^\varepsilon(T)) - f(D^\varepsilon_{h}(T))|^2] = O(h^2 + h \varepsilon^4), 
\end{equation}
  where $D^\varepsilon$ is the solution to \eqref{eq:2408957248907}. 
  We note that the $O(h^2)$ term cannot be avoided when we analyze the mean-square error because the underlying deterministic Euler method is first order. From the mean-square error bound \eqref{eq:MT} we can deduce that for some $C_1, C_2 > 0$, we have $\max_{0\le n\le  M^{\ell-1}}\textsf{Var}(f(D_{h_{\ell}}^\varepsilon(t_n)) - f(D_{h_{\ell-1}}^\varepsilon(t_n)))\leq {C}_1h_{\ell-1}^2+{C}_2h_{\ell-1}\varepsilon^4$, where, again, $t_n = n\cdot h_{\ell-1}$.
 Theorem \ref{thm:var} sharpens this bound considerably, showing that the 
overall variance scales favorably with $\varepsilon$, even though the
Euler--Maruyama method 
has not been customized to exploit the small noise property.

\section{Complexity analysis}
\subsection{Standard Monte Carlo methods}\label{sec:2.1}
As a basis for comparison, we first analyze the complexity of standard Monte Carlo
with a general discretization method.   

Suppose $D_h^\varepsilon$ is generated by a numerical scheme (not necessarily \eqref{eq:48907528}) for which the bias of the discretization method satisfies 
\begin{equation}\label{eq:908897}
	|\E [f(D_{h}^\varepsilon(T))]-\E [f(D^\varepsilon(T))]|=O(h^{p}+\varepsilon^{r}h^{q}),
\end{equation}
where $q\le p$ and $r \ge 0$ (see \cite{milstein1997numerical}, where some such methods are provided). In order to ensure that the bias \eqref{eq:908897} is of order $\delta$, we require that
\begin{equation}\label{eq:67697696}
h = O(\min(\delta^{1/p},\delta^{1/q}\varepsilon^{-r/q})).
\end{equation}
Under our running assumptions and assuming Lemma \ref{lem:var_bound} below, which applies to Euler-Maruyama, holds for these customized methods we find 
$$\textsf{Var}(f(D_h^\varepsilon(T))) =\textsf{Var}(f(D_h^\varepsilon(T))-f(z_h(T))) \le C\E \left[\sup_{s \le T}	 |D_{h}^\varepsilon(s) - z_h(s)|^2\right]= O(\varepsilon^2),$$ 
where $z_h$ is the Euler solution to the associated deterministic model obtained when $\varepsilon$ is set to $0$ in \eqref{eq:2408957248907}, see \eqref{gen_approx_ode}. 
Thus, the standard Monte Carlo estimator
\[
	\E[f(D^\varepsilon(T))] \approx \E[f(D_h^\varepsilon(T))] \approx \frac{1}{n} \sum_{i = 1}^n f(D_{h,[i]}^\varepsilon(T)),
\]
where $D_{h,[i]}^\varepsilon$ is the $i$th independent realization of the process $D_h^\varepsilon$, 
has a variance that is $O(n^{-1} \varepsilon^2)$.  To produce an overall estimator variance of $O(\delta^2)$, we require that $n = O(\varepsilon^2 \delta^{-2}+1)$, where the ``$+1$'' captures the requirement that at least one path must be generated.  Assuming  that the cost of generating a single path of the scheme scales like $h^{-1}$, we
obtain an upper bound on the overall computational complexity of order 
\begin{equation}\label{eq:578965}
O((\varepsilon^2 \delta^{-2} +1)h^{-1})=O\left(\frac{\varepsilon^2\delta^{-2} + 1}{\min(\delta^{1/p},\delta^{1/q}\varepsilon^{-r/q})}\right).
\end{equation}

For example, with the Euler-Maruyama scheme \eqref{eq:48907528} we have that $p = q= 1, r=0$ yielding a bias of $O(h)$ in \eqref{eq:908897}. 
In this case we  select $h = O(\delta)$, and find a computational complexity of $O(\varepsilon^2 \delta^{-3} + \delta^{-1})$.  

To see how a customized method may be beneficial, consider the case $\delta = O(\varepsilon^{\rho})$ for $\rho \in (1,2)$.  We may select a method with $p=2, r = 2,q = 1$ (see section 5 of \cite{milstein1997numerical}) in which case \eqref{eq:67697696} gives $h = O(\varepsilon^{\rho/2})$, and \eqref{eq:578965} yields a computational complexity of order $O(\varepsilon^{2-\frac52\rho})$; see subsection \ref{sec:comparisons} for further details..

\subsection{Euler-based multilevel Monte Carlo}
\label{subsec:emmc}
Here we specify and analyze an Euler-Maruyama based multilevel Monte Carlo method for the diffusion approximation.  We follow the original framework of Giles \cite{Giles2008}.

For a fixed positive integer $M\ge 2$ we let $h_{\ell} = T \cdot M^{-\ell}$ for $\ell \in \{0,\dots,L\}$.  Reasonable choices for $M$ include $M\in \{2,3,4,5,6 , 7\}$, and $L$ is determined below.
For each $\ell \in \{0,1,\dots, L\}$, let $\DN_{h_\ell}$ denote the approximate process generated by \eqref{eq:48907528} with a step size of $h_{\ell}$.
Note that
\begin{align*}
	\E [f(D^\varepsilon(T))] &\approx \E [f(D_{h_L}^\varepsilon(T))] =  \E[f(D^\varepsilon_{h_0}(T))] +  \sum_{\ell = 1}^L \E[ f(D^\varepsilon_{h_{\ell}}(T)) - f(D^\varepsilon_{h_{\ell-1}}(T))],
\end{align*}
with the quality of the approximation only depending upon $h_L$.  As mentioned in \cite{milstein1997numerical}, the Euler discretization has a weak order of one in the present setting for a large class of functionals $f$.  Hence, we set $h_L = \delta$ in order for the bias to be $O(\delta)$.  This choice yields $L = O( \log(1/\delta) )$.
We now let
\begin{align*}
	\QNl_{0} &\eqdef \frac{1}{n_{0}} \sum_{i = 1}^{n_{0}} f(\DN_{h_0,[i]}(T)), \quad \text{and} \quad
	\QNl_{\ell} \eqdef \frac{1}{n_{\ell}} \sum_{i = 1}^{n_{\ell}} ( f(\DN_{h_\ell,[i]}(T)) - f(\DN_{h_{\ell-1},[i]}(T))),
\end{align*}
for $\ell = 1, \dots, L$, where $n_0$ and the different $n_\ell$ have yet to be determined.  
Our  estimator is then
\[
  \QNl \ \eqdef \
  \QNl_0 +
  \sum_{\ell = 1}^L \QNl_{\ell},
\]
which is the usual multilevel Monte Carlo estimator \cite{Giles2008}.
 Set
\[
	\delta_{\varepsilon,\ell} = \textsf{Var}  ( f(\DN_{h_\ell}(T)) - f(\DN_{h_{\ell-1}}(T))).
\]
 By Theorem \ref{thm:var}, we have $\delta_{\varepsilon,\ell}=O(h_\ell^2\varepsilon^2+h_\ell \varepsilon^4)$ under a wide array of circumstances.  Also note that $\delta_{\varepsilon,0}=\textsf{Var}(f(D_{0}^\varepsilon)) = O(\varepsilon^2).$ 
 
 For $\ell \in\{1,\dots,L\}$, let $C_\ell$  be the computational complexity required to generate a single pair of coupled trajectories at level $\ell$.  Let $C_0$  be the computational complexity required to generate a single trajectory at the coarsest level.  To be concrete, we set $C_\ell$ to be the number
of random variables required to generate the requisite path.  To determine $n_{\ell}$,
 we solve the following optimization problem, which ensures a total variance of $\QNl$ no greater than order $\delta^2$:
 \begin{align}
&\underset{n_{\ell}}{\text{minimize}} \hspace{.15in} \sum_{\ell = 0}^L n_{\ell}C_{\ell},\label{eq:LM1}\\
&\text{subject to} \hspace{.1in} \sum_{\ell = 0}^L \frac{\delta_{\varepsilon,\ell}}{n_{\ell}}= \delta^2.\label{eq:LM2}
\end{align}
     We use Lagrange multipliers.  Since $C_{\ell}=K\cdot h_{\ell}^{-1}$, for some fixed constant $K$, the optimization problem above is solved at solutions to
\begin{equation*}
 \nabla_{n_0,\dots,n_L,\lambda}\left(
 \sum_{\ell = 0}^L n_{\ell}K\cdot h_{\ell}^{-1}+\lambda\left(\sum_{\ell = 0}^L \frac{\delta_{\varepsilon,\ell}}{n_{\ell}}- \delta^2\right)\right) = 0.
\end{equation*}
By taking a derivative with respect to $n_{\ell}$ we obtain,
\begin{equation}
\label{eq:nl}
n_{\ell}= \sqrt{\tfrac{\lambda}{K}\delta_{\varepsilon,\ell}h_{\ell}}, \qquad \text{ for } \ell \in\{0,1,2,\dots,L\}
\end{equation}
for some $\lambda\ge 0$. Plugging \eqref{eq:nl} into \eqref{eq:LM2} yields
\begin{equation}\label{eq:78966798}
\sum_{\ell = 0}^L\sqrt{ \frac{\delta_{\varepsilon,\ell}}{h_{\ell}}}= \sqrt{\tfrac{\lambda}{K}}\cdot \delta^2
\end{equation}
and hence, by Theorem \ref{thm:var} there is a $C>0$ for which
\begin{equation}\label{eq:09877890}
 \sqrt{\tfrac{\lambda}{K}}=\delta^{-2}\sum_{\ell = 0}^L\sqrt{ \frac{\delta_{\varepsilon,\ell}}{h_{\ell}}} \le C\delta^{-2}\sum_{\ell = 0}^L\sqrt{ h_\ell\varepsilon^2+\varepsilon^4} \le \tilde C\delta^{-2}(\varepsilon+\varepsilon^{2} L),
\end{equation}
where in the final inequality we used that $\sqrt{a+b} \le \sqrt{a} + \sqrt{b}$ for non-negative $a,b$, and $\tilde C$ is a new constant.
Recall that $L=O(\log(1/\delta))$.  Hence, if $\delta\ge e^{-\frac{1}{\varepsilon}}$, which is equivalent to $\varepsilon^2 L \le \varepsilon$, then
\[
\frac{\lambda}{K}=O(\delta^{-4}\varepsilon^2).
\]
Plugging this back into \eqref{eq:nl}, and recognizing that we must have $n_\ell \ge 1$, yields
\[
	n_{\ell} = O(\delta^{-2} \varepsilon \sqrt{\delta_{\varepsilon,\ell} h_\ell} + 1).
\]
Hence, we see that in this case of $\delta \ge e^{-\frac{1}{\varepsilon}}$ the overall computational complexity is
\begin{align}
\sum_{\ell=0}^L n_\ell K \cdot h_\ell^{-1} = O \left( \sum_{\ell=0}^L \delta^{-2} \varepsilon \sqrt{\delta_{\varepsilon,\ell}h_{\ell}^{-1}} + \sum_{\ell=0}^L h_{\ell}^{-1}\right) = O \left(  \varepsilon^2\delta^{-2}  +  \delta^{-1}\right).
\label{eq:reg1}
\end{align}

%

If $\delta < e^{-\frac{1}{\varepsilon}}$, in which case $\varepsilon^2 L > \varepsilon$, then $\frac{\lambda}{K} = O(\delta^{-4} \varepsilon^4 L^2)$, and \eqref{eq:nl} yields
\begin{align*}
	n_\ell = O\left(\varepsilon^2\delta^{-2} L \sqrt{\delta_{\varepsilon,\ell} h_\ell}\right).
\end{align*}
Since $\delta^{-2}\varepsilon^2 L \sqrt{\delta_{\varepsilon,\ell} h_\ell} \ge 0.89$ when $\delta < e^{-\frac1\varepsilon}$, we see that the usual ``+1'' term is not necessary in the expression above.  
We may now conclude that the overall computational complexity under the assumption $\delta < e^{-\frac{1}{\varepsilon}}$ is
\begin{align}\label{eq:reg2}
	\sum_{\ell=0}^L n_\ell K \cdot h_\ell^{-1} = O( \varepsilon^4\delta^{-2} \log(1/\delta)^2).
\end{align}

\subsection{Comparisons}
\label{sec:comparisons}
There are multiple scaling regimes to consider.  We begin with $\delta < e^{-\frac1\varepsilon}$,  which represents a severe accuracy requirement.  Under this assumption, the computational complexity of multilevel Monte Carlo with Euler-Maruyama is given by  \eqref{eq:reg2}, whereas the  complexity \eqref{eq:578965} required for methods tailored to the small noise setting is
\[
O\left( \varepsilon^2\delta^{-2}  (\delta^{-\frac1p} + \delta^{-\frac1q} \varepsilon^{r/q}) \right).
\]
Hence, so long as 
\begin{align}\label{eq:89769768}
	\varepsilon^2 \log(1/\delta)^2 < \delta^{-\frac1p} + \delta^{-\frac1q} \varepsilon^{r/q} 
\end{align}
 multilevel Monte Carlo combined with Euler-Maruyama is most efficient, and there is no need to utilize customized methods.  To get a sense of the restriction \eqref{eq:89769768}, we note that if $p = 2$, then \eqref{eq:89769768} holds  so long as $\varepsilon < 0.65$, and if $p = 4$, then  \eqref{eq:89769768} holds  so long as $\varepsilon < 0.33$. 
   In fact, under the further assumption that $\delta \approx e^{-\frac1\varepsilon}$ we see that \eqref{eq:reg2} is $O(\varepsilon^2 \delta^{-2})$, the same ---asymptotically in the parameters $\delta$ or $\varepsilon$---as in the situation where we can generate independent realizations of $f(D^\varepsilon(T))$ exactly in a single step.  As  $\delta$ decreases below this threshold, the ratio between \eqref{eq:reg2} and the complexity in the idealized setting considered in the introduction grows like $\log(1/\delta)^2$, as is common in the multilevel setting:
\[
	\frac{\varepsilon^4 \delta^{-2} \log(1/\delta)^2}{\varepsilon^2 \delta^{-2}} = \varepsilon^2 \log(1/\delta)^2.
\]

\vspace{.1in}


Turning to the case $\delta \ge e^{-\frac1\varepsilon}$, there are two relevant subcases to consider.  First, in the regime $\delta \le \varepsilon^{2}$ we have $\varepsilon^2 \delta^{-2} \ge \delta^{-1}$ and the  complexity \eqref{eq:reg1} is of order $O(\varepsilon^{2}\delta^{-2})$.  
This bound compares favorably with the bound  
$O(\varepsilon^2 \delta^{-3})$
that we derived in
subsection~\ref{subsec:emmc}
for standard Monte Carlo with Euler--Maruyama, and allows us to 
carry through 
a conclusion that applies to general SDEs 
\cite{Giles2008}:
multilevel Monte Carlo can improve on the complexity 
of standard Monte Carlo by a factor $\delta^{-1}$, where 
$\delta $ is the required accuracy. 
Moreover, and still under the assumption that $\delta \le \varepsilon^2$, the complexity $O(\varepsilon^2 \delta^{-2})$ is uniformly superior to the complexity \eqref{eq:578965} required for methods tailored to the small noise setting.  Hence, we may conclude that when $\delta \le \varepsilon^2$, there is no need to use such tailored methods.  Finally,  following the discussion in section~\ref{sec:intro} and in the paragraph above, 
we note that this multilevel Euler computational complexity is the same---asymptotically in the parameters $\delta$ or $\varepsilon$---as in the situation where we can generate independent realizations of $f(D^\varepsilon(T))$ exactly in a single step.

The last case to consider  is $\delta > \varepsilon^2$.  Now the complexity \eqref{eq:reg1} is of order $O(\delta^{-1})$, the same as Euler's method applied to an ordinary differential equation.  In this case, well selected customized methods can be asymptotically more efficient than multilevel Monte Carlo combined with standard Euler-Maruyama.  For example, and following the discussion at the end of section \ref{sec:2.1}, if $\delta = \varepsilon^\rho$ for some $\rho \in (1,2)$, then the multilevel method with Euler-Maruyama requires a complexity of order $O(\varepsilon^{-\rho})$.  However, a customized method with $p = 2, r = 2, q=1$ requires a complexity of order $O(\varepsilon^{2-\frac52 \rho})$.  Hence, the customized method is superior when $\rho \in (1,\frac43)$.

Finally, it is tempting to think that the computational complexity of the multilevel scheme found above can be heuristically derived in the following manner.  Start with a continuous time Markov chain model which satisfies a scaling so that \eqref{eq:2408957248907} is a natural diffusion approximation of the jump process.  Next, use the results of \cite{AHS2014}, which are related to the variance between two coupled paths of the jump process, to infer the proper scaling in the diffusive regime.  Somewhat surprisingly, this heuristic does not work and leads to overly pessimistic results.  We delay a deeper discussion of this issue until section \ref{sec:comparisonCTMC}, where we address this issue both analytically and computationally.


\section{Proof of Theorem \ref{thm:var}}
\label{sec:res}

Throughout this section, we assume the conditions of Theorem \ref{thm:var} are met with positive integer $M$ fixed.

The coupling of the two approximate processes, $D^\varepsilon_{h_{\ell}}(t)$ and $D^\varepsilon_{h_{\ell-1}}(t)$, takes the form
\begin{align*}
	D_{h_{\ell}}^\varepsilon(t) &= D(0) + \int_0^t \mu(D_{h_{\ell}}^\varepsilon(\eta_{h_\ell}(s))) \, ds + \varepsilon \int_0^t \sigma(D^\varepsilon_{h_{\ell}}(\eta_{h_\ell}(s)))\, dW(s), \\ 
	D_{h_{\ell-1}}^\varepsilon(t) &= D(0) + \int_0^t \mu(D_{h_{\ell-1}}^\varepsilon(\eta_{h_{\ell-1}}(s))) \, ds + \varepsilon \int_0^t \sigma(D^\varepsilon_{h_{\ell-1}}(\eta_{h_{\ell-1}}(s)))\, dW(s).
\end{align*}
For $n \in \{0,1,\dots,M^{\ell-1}\}$ and $k \in \{0,\dots,M\}$ let 
\[
	 t_n=nh_{\ell-1},\quad \text{and} \quad t_n^k=nh_{\ell-1}+kh_{\ell}.
\]
Note that for each $n$ we have
\[
t_{n}^0=t_n,\quad t_n^M=t_{n+1}.
\]
We use the following discretization scheme to simulate the coupling above.  First, for each $n \in \{0,1,\ldots,M^{\ell-1}\}$ and $k \in \{0,\dots,M-1\}$, let 
\begin{align}
D_{h_{\ell}}^\varepsilon(t_{n}^{k+1}) =D_{h_{\ell}}^\varepsilon(t_{n}^{k}) + \mu(D_{h_{\ell}}^\varepsilon(t_{n}^k)){h_\ell} +  \varepsilon\sqrt{h_\ell}\sigma(D_{h_{\ell}}^\varepsilon(t_{n}^k))W_{n}^{k},
  \label{eq:d11}
\end{align}
where  the random vector $W_{n}^{k} \in \R^m$ has independent components (from each other and all previous random variables), and each component is distributed as $N(0,1)$.   Note that \eqref{eq:d11} implies
\begin{align*}
 	D_{h_{\ell}}^\varepsilon(t_{n+1}) =D_{h_{\ell}}^\varepsilon(t_{n}) + \sum_{k=0}^{M-1} \mu(D_{h_{\ell}}^\varepsilon(t_{n}^k)){h_\ell} + \varepsilon\sqrt{h_\ell}\sum_{k=0}^{M-1} \sigma(D_{h_{\ell}}^\varepsilon(t_{n}^k))W_{n}^{k}.
     \end{align*}
To simulate $D_{h_{\ell-1}}^\varepsilon$, we then use
    \begin{align*}
	D_{h_{\ell-1}}^\varepsilon(t_{n+1}) = D_{h_{\ell-1}}^\varepsilon(t_{n}) + \mu(D_{h_{\ell-1}}^\varepsilon(t_{n})){h_{\ell-1}} + \varepsilon\sqrt{h_{\ell-1}} \sigma(D_{h_{\ell-1}}^\varepsilon(t_{n})) \sum_{k=0}^{M-1} W_{n}^{k}.
\end{align*}

We begin with a series of necessary lemmas.

\begin{lemma}
\label{lem:moment_bound}
	For any $T>0$ we have
\[
			\E\left[\sup_{0\le s\le T} |D_{h_\ell}^\varepsilon(s)|^4\right] \leq  C,
\]
for some $C= C(a,b,T,D(0))$.
\end{lemma}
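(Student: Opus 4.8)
The plan is to derive a closed integral inequality for $\phi(t)\eqdef\E[\sup_{0\le s\le t}|D_{h_\ell}^\varepsilon(s)|^4]$ and then close it with Gr\"onwall's inequality. Starting from the integral form \eqref{eq:48907528} and using the elementary convexity bound $|x+y+z|^4\le 27(|x|^4+|y|^4+|z|^4)$, I would split $|D_{h_\ell}^\varepsilon(t)|^4$ into the initial term $|D(0)|^4$, a drift contribution, and a diffusion contribution, then take the supremum over $t\in[0,T]$ followed by expectations.

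For the drift term I apply Jensen's (H\"older's) inequality to pull the fourth power inside the time integral, $\bigl|\int_0^t \mu(D_{h_\ell}^\varepsilon(\eta_{h_\ell}(s)))\,ds\bigr|^4\le T^3\int_0^t |\mu(D_{h_\ell}^\varepsilon(\eta_{h_\ell}(s)))|^4\,ds$, and then use the linear-growth bound $|\mu(x)|^2\le a(1+|x|^2)$ from the running assumptions together with $(1+|x|^2)^2\le 2(1+|x|^4)$. For the diffusion term I apply the Burkholder--Davis--Gundy inequality to bound $\E[\sup_{t\le T}|\varepsilon\int_0^t\sigma\,dW|^4]$ by a constant times $\varepsilon^4\,\E[(\int_0^T|\sigma(D_{h_\ell}^\varepsilon(\eta_{h_\ell}(s)))|^2\,ds)^2]$, then use H\"older and the growth bound $|\sigma(x)|^2\le b(1+|x|^2)$. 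Crucially, since $\eta_{h_\ell}(s)\le s$, I replace $|D_{h_\ell}^\varepsilon(\eta_{h_\ell}(s))|^4\le \sup_{0\le u\le s}|D_{h_\ell}^\varepsilon(u)|^4$ throughout, so every integrand becomes the running supremum and the estimate closes as
\[
\phi(t)\le C_0 + C_1\int_0^t \phi(s)\,ds,
\]
with $C_0,C_1$ depending only on $a,b,T,d,m,D(0)$. Here I use $\varepsilon\in(0,1)$ to replace each factor $\varepsilon^4$ by $1$, making the constants free of $\varepsilon$, and the bounds are manifestly independent of $h_\ell$.

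Before invoking Gr\"onwall I must ensure $\phi(t)<\infty$; the cleanest route is localization by the stopping times $\tau_N=\inf\{t\ge 0:|D_{h_\ell}^\varepsilon(t)|\ge N\}$. Running the whole argument with $t$ replaced by $t\wedge\tau_N$ makes the left-hand side finite by construction, Gr\"onwall then gives $\phi_N(T)\le C_0 e^{C_1 T}$ uniformly in $N$, and monotone convergence as $N\to\infty$ yields the claimed bound with $C=C_0 e^{C_1 T}$. I expect the only genuine care points to be the correct BDG exponent for the fourth moment (pairing the fourth power of the stochastic integral with the square of the quadratic variation) and the bookkeeping verifying that all constants are truly free of $\varepsilon$ and $h_\ell$; otherwise this is a standard a priori moment estimate and no step presents a substantive obstacle.
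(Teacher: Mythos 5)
Your proposal is correct, and the core of the estimate is the same as the paper's: the $|x+y+z|^4\le 27(|x|^4+|y|^4+|z|^4)$ split, H\"older on the drift integral, Burkholder--Davis--Gundy on the stochastic integral, and the linear-growth hypotheses to close everything in terms of the running supremum. Where you diverge is in how the Gr\"onwall step is closed. The paper first specializes the continuous inequality to the grid points $t=nh_\ell$, so that the time integrals become finite sums over grid intervals; it then applies the \emph{discrete} Gr\"onwall lemma to bound $\E[\sup_{m\le n}|D_{h_\ell}^\varepsilon(mh_\ell)|^4]$, and finally feeds that discrete bound back into the continuous inequality \eqref{eq:cont_z} to control the supremum over all of $[0,T]$. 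You instead run a direct continuous Gr\"onwall argument on $\phi(t)=\E[\sup_{s\le t}|D_{h_\ell}^\varepsilon(s)|^4]$, using $\eta_{h_\ell}(s)\le s$ to absorb the grid evaluation into the running supremum, and you handle the a priori finiteness of $\phi$ by localizing with the exit times $\tau_N$ and passing to the limit by monotone convergence. Both routes are sound: your localization is the standard way to justify the continuous Gr\"onwall and is airtight here since the paths are continuous (so the stopped supremum is bounded by $N$ plus the initial data); the paper's two-stage discrete-then-continuous argument achieves the same end without stopping times, because the grid values are finite compositions of Gaussians under linear-growth coefficients and hence manifestly have finite fourth moments. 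Your version is arguably cleaner in exchange for the extra localization bookkeeping; neither approach gains or loses anything in the constants, which in both cases depend only on $a,b,T,D(0)$ and are uniform in $\varepsilon$ and $h_\ell$.
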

\begin{proof}
For any $t>0$,
\begin{align*}
   |D_{h_{\ell}}^\varepsilon(t)|^4\le27|D(0)|^4+27\left| \int_{0}^{t}\mu(D_{h_{\ell}}^\varepsilon(\eta_{h_\ell}(s)))ds \right|^4+27\varepsilon^4\left|\int_{0}^{t}\sigma(D_{h_{\ell}}^\varepsilon(\eta_{h_\ell}(s)))dW(s)\right|^4.
\end{align*}
Thus, 
\begin{align}
   \sup_{0\le s \le t}|D_{h_{\ell}}^\varepsilon(s)|^4&\le 27|D(0)|^4+27t^3\int_{0}^{t}\sup_{0\le r \le s}|\mu(D_{h_{\ell}}^\varepsilon(\eta_{h_{\ell}}(r)))|^4ds\notag\\
   &\hspace{.2in}+ 27\varepsilon^4\sup_{0\le s \le t}\left|\int_{0}^{s}\sigma(D_{h_{\ell}}^\varepsilon(\eta_{h_{\ell}}(r)))dW(r)\right|^4,\label{eq:BDG88}
\end{align}
since the right-hand-side is monotonically increasing in $t$.
Applying the Burkholder-Davis-Gundy inequality  \cite{mao1997stochastic} to the  term \eqref{eq:BDG88} and taking expectations we get
\begin{align}
\begin{split}
    \E\left[\sup_{0\le s \le t}|D_{h_{\ell}}^\varepsilon(s)|^4\right] &\le27|D(0)|^4+27t^3\int_{0}^{t}\E\left[\sup_{0\le r \le s}|\mu(D_{h_{\ell}}^\varepsilon(\eta_{h_{\ell}}(r)))|^4\right]ds\\
   &\quad+K(T)\varepsilon^4\int_{0}^{t}\E[|\sigma(D_{h_{\ell}}^\varepsilon(\eta_{h_{\ell}}(s)))|^4]ds,
   \end{split}
   \label{eq:cont_z}
\end{align}
where $K(T)$ is a generic constant only depending on $T$.
Using \eqref{eq:cont_z} with $t = nh_{\ell}$ and $s = mh_{\ell}$, where $n$ and $m$ are nonnegative integers for which $mh_{\ell} \le nh_{\ell}\le t \le T$, we get
\begin{align*}
   \E\left[\sup_{m \le n}|D_{h_{\ell}}^\varepsilon(mh_{\ell})|^4\right]   &\le27|D(0)|^4+27t^3\sum_{i=0}^{n-1}\E\left[ \sup_{m \le i}|\mu(D_{h_{\ell}}^\varepsilon(mh_\ell))|^4\right]h_\ell \\
   &\quad +K(T)\varepsilon^4\sum_{i=0}^{n-1}\E\left[|\sigma(D_{h_{\ell}}^\varepsilon(ih_\ell))|^4\right]h_\ell\\
   &\le 27|D(0)|^4+54a^2T^4+K(T)b^2\varepsilon^4\\
   &\hspace{.2in}+(54a^2T^3+K(T)b^2\varepsilon^4)\sum_{i=0}^{n-1}\E\left[\sup_{m \le i}|D_{h_{\ell}}^\varepsilon(mh_\ell)|^4\right]h_\ell,
\end{align*}
where in the final inequality we applied the growth conditions for both $\mu$ and $\sigma$ found in the running assumption.
We then use the discrete version of Gronwall's Lemma to obtain 
\[
\E\left[\sup_{m \le n}|D_{h_{\ell}}^\varepsilon(mh)|^4\right]\le C_1(a,b,T,D(0)).
\]
Now we return to \eqref{eq:cont_z} and, after  applying the growth conditions pertaining to both $\mu$ and $\sigma$ in our running assumption, conclude
\begin{equation*}
	\E \left[\sup_{0\le s \le T}|D_{h_{\ell}}^\varepsilon(s)|^4\right] \le C(a,b,T,D(0)),
\end{equation*}
for some new constant $C$.
\end{proof}

Let $z_h$ be the deterministic solution to
\begin{equation}\label{gen_approx_ode}
	z_h(t) = D(0) + \int_0^t \mu(z_h(\eta_h(s)))ds,
\end{equation}
which is an Euler approximation to the ODE obtained from \eqref{eq:2408957248907} when $\varepsilon$ is set to zero.

\begin{lemma}\label{lem:var_bound}
	For any $T>0$ we have
\[
		 \E \left[\sup_{0\le s \le T}	 |D_{h_{\ell}}^\varepsilon(s) - z_{h_\ell}(s)|^2\right] \leq  C\varepsilon^2,
\]
for some $C=C(a,b,T,D(0))$.
\end{lemma}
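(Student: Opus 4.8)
The plan is to set $e(t) \eqdef D^\varepsilon_{h_\ell}(t) - z_{h_\ell}(t)$ and run a Gronwall argument on $\phi(t) \eqdef \E[\sup_{0\le s\le t}|e(s)|^2]$. Subtracting \eqref{gen_approx_ode} from \eqref{eq:48907528}, and observing that both schemes start from $D(0)$, use the same step-size $h_\ell$, and evaluate $\mu$ at the same grid times $\eta_{h_\ell}(s)$, we get $e(t) = \int_0^t [\mu(D^\varepsilon_{h_\ell}(\eta_{h_\ell}(s))) - \mu(z_{h_\ell}(\eta_{h_\ell}(s)))]\,ds + \varepsilon\int_0^t \sigma(D^\varepsilon_{h_\ell}(\eta_{h_\ell}(s)))\,dW(s)$. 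The key structural point is that the drift contributions do not produce any deterministic discretization error: their difference is of the form $\mu(\cdot) - \mu(\cdot)$ evaluated at the same discretized argument, so the Lipschitz assumption controls it by $|e|$ itself and it can be absorbed on the left via Gronwall. The only exogenous source of error is the stochastic integral, which carries the explicit prefactor $\varepsilon$. This is precisely why the final bound is a clean $O(\varepsilon^2)$ with no $h_\ell$-dependent term.

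First I would split $\sup_{s\le t}|e(s)|^2 \le 2\sup_{s\le t}\bigl|\int_0^s[\mu(D^\varepsilon_{h_\ell}(\eta_{h_\ell}(r))) - \mu(z_{h_\ell}(\eta_{h_\ell}(r)))]\,dr\bigr|^2 + 2\varepsilon^2\sup_{s\le t}\bigl|\int_0^s \sigma(D^\varepsilon_{h_\ell}(\eta_{h_\ell}(r)))\,dW(r)\bigr|^2$ using $|a+b|^2\le 2|a|^2+2|b|^2$. For the drift term, Cauchy–Schwarz gives $\sup_{s\le t}|\int_0^s(\cdots)\,dr|^2 \le t\int_0^t|\mu(D^\varepsilon_{h_\ell}(\eta_{h_\ell}(r)))-\mu(z_{h_\ell}(\eta_{h_\ell}(r)))|^2\,dr$, and the Lipschitz bound $|\mu(x)-\mu(y)|^2\le a|x-y|^2$ turns this into $at\int_0^t \sup_{u\le r}|e(u)|^2\,dr$, whose expectation is $\le aT\int_0^t\phi(r)\,dr$.

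For the stochastic term I would apply the Burkholder–Davis–Gundy (or Doob $L^2$) inequality exactly as in Lemma \ref{lem:moment_bound}, obtaining $\E[\sup_{s\le t}|\int_0^s\sigma(D^\varepsilon_{h_\ell}(\eta_{h_\ell}(r)))\,dW(r)|^2] \le K(T)\E[\int_0^t |\sigma(D^\varepsilon_{h_\ell}(\eta_{h_\ell}(r)))|^2\,dr]$; the growth condition $|\sigma(x)|^2\le b(1+|x|^2)$ together with the second-moment bound that follows from Lemma \ref{lem:moment_bound} (via Jensen, since the fourth moment is controlled) bounds this by a constant $\tilde C(a,b,T,D(0))$ independent of $\varepsilon$. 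Combining the two estimates yields $\phi(t) \le 2\varepsilon^2\tilde C + 2aT\int_0^t\phi(r)\,dr$, and Gronwall's inequality gives $\phi(T)\le 2\tilde C\,e^{2aT^2}\varepsilon^2$, which is the claimed bound with $C = 2\tilde C e^{2aT^2}$.

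The argument is essentially routine; the only thing requiring care is feeding the moment bound of Lemma \ref{lem:moment_bound} correctly through the growth condition so that the stochastic term contributes a constant rather than something depending on $\varepsilon$ or $h_\ell$, and ensuring the $\sup$ inside the expectation is handled by a maximal inequality before Gronwall rather than after. There is no genuine obstacle here — the substantive difficulty of the paper lies in the sharp variance estimate of Theorem \ref{thm:var}, whereas this lemma only records the order-$\varepsilon^2$ separation of the stochastic Euler path from its deterministic counterpart.
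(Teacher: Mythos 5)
Your proof is correct and follows essentially the same route as the paper's: the same splitting of $|D_{h_\ell}^\varepsilon - z_{h_\ell}|^2$ into a drift difference (controlled by Cauchy--Schwarz and the Lipschitz bound on $\mu$) plus an $\varepsilon^2$-weighted stochastic integral (controlled by a maximal inequality, the growth condition on $\sigma$, and the moment bound of Lemma \ref{lem:moment_bound}), followed by Gronwall. The only organizational difference is that you apply the continuous Gronwall inequality directly to $\phi(t)=\E[\sup_{s\le t}|e(s)|^2]$ after bounding $|e(\eta_{h_\ell}(r))|^2\le\sup_{u\le r}|e(u)|^2$, whereas the paper first runs a discrete Gronwall argument at the grid points and then returns to the continuous-time inequality; your shortcut is valid since $\phi$ is monotone and finite by Lemma \ref{lem:moment_bound}.
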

\begin{proof}
For $t\le T$, we have
\begin{align*}
|D_{h_{\ell}}^\varepsilon(t)-z_{h_\ell}(t)|^2&\le 2T\int_0^t|\mu(D_{h_{\ell}}^\varepsilon(\eta_{h_{\ell}}(s)))-\mu(z_{h_\ell}(\eta_{h_{\ell}}(s))|^2ds\\
&\hspace{.2in}+2\varepsilon^2\left|\int_0^t\sigma(D_{h_{\ell}}^\varepsilon(\eta_{h_{\ell}}(s)))dW(s)\right|^2.
\end{align*}
As a result of the Burkholder-Davis-Gundy inequality and our running assumptions,
\begin{align}
\begin{split}
\E &\left[\sup_{0\le s\le t}  \left|D_{h_{\ell}}^\varepsilon(s) - z_{h_\ell}(s)\right|^2\right] \\
& \le 2aT\int_0^t\E\left[ \sup_{0\le s\le r} |D_{h_{\ell}}^\varepsilon(\eta_{h_{\ell}}(s)) - z_{h_\ell}(\eta_{h_{\ell}}(s))|^2\right] dr+8\varepsilon^2\int_0^t\E[|\sigma(D_{h_{\ell}}^\varepsilon(\eta_{h_{\ell}}(s)))|^2]ds\\
& \le 8bT\E\left[\sup_{0\le s\le t}(1+|D_{h_{\ell}}^\varepsilon(s)|^2)\right]\varepsilon^2+2aT\int_0^t \E\left[\sup_{0\le s\le r} |D_{h_{\ell}}^\varepsilon(\eta_{h_{\ell}}(s)) - z_{h_\ell}(\eta_{h_{\ell}}(s))|^2\right] dr.
\end{split}
\label{eq:needed_bound14545}
\end{align}

Specializing the above to $t = nh_{\ell}$ and $s = mh_{\ell}$, where $n$ and $m$ are nonnegative integers for which $mh_{\ell} \le nh_{\ell}\le t \le T$, we get 
 \begin{align*}
\E  \bigg[\sup_{m\le n}& |D_{h_{\ell}}^\varepsilon(mh_\ell) - z_{h_\ell}(mh_\ell)|^2\bigg] \\
&\le 8bT\E\left[\sup_{0\le s\le t}(1+|D_{h_{\ell}}^\varepsilon(s)|^2)\right]\varepsilon^2+ 2aT\sum_{i = 0}^{n-1}h_\ell\cdot  \E\left[\sup_{m\le i } |D_{h_{\ell}}^\varepsilon(mh_\ell) - z_{h_\ell}(mh_\ell)|^2\right]\\
&\le 8bT(1+K)\varepsilon^2+2aT\sum_{i = 0}^{n-1}h_\ell\cdot  \E\left[\sup_{m\le i } |D_{h_{\ell}}^\varepsilon(mh_\ell) - z_{h_\ell}(mh_\ell)|^2\right],
\end{align*}
for some $K=K(a,b,T,D(0))$, where the first inequality follows from \eqref{eq:needed_bound14545} and the second utilizes Lemma \ref{lem:moment_bound}.

By the discrete version of Gronwall's inequality we see
\begin{align*}
\E \left[\sup_{m\le n} |D_{h_{\ell}}^\varepsilon(mh_\ell) - z_{h_\ell}(mh_\ell)|^2 \right]\leq ( 8bT(1+K))e^{2aT^2}\varepsilon^2.
\end{align*}
Since $n$ satisfying $nh_\ell\le T$ was arbitrary, we return to \eqref{eq:needed_bound14545}  to conclude
that for any $0\le t\le T$
\begin{align*}
\E \left[ \sup_{s\le t} |D_{h_{\ell}}^\varepsilon(s) - z_{h_\ell}(s)|^2\right] &\le C(a,b,T,D(0))\varepsilon^2.
\qedhere
		\end{align*}		
\end{proof}

\begin{lemma}\label{lem:local_error1} 
	\[
		 \max_{\substack{0\le n\le M^{\ell-1}\\ 1\le k\le M}}|\E [D_{h_{\ell}}^\varepsilon(t_{n}^k) - D_{h_{\ell}}^\varepsilon(t_{n})]|\le CMh_{\ell},
	\]
where $C$ is a positive constant that only depends on $a,b,T,m,D(0)$.
 \end{lemma}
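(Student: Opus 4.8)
The plan is to work directly with the one-step recursion \eqref{eq:d11} for the fine-scale process and to exploit the fact that the Gaussian increments are mean zero and independent of the current state. First I would telescope \eqref{eq:d11} across the fine steps inside a single coarse interval. Summing from $t_n = t_n^0$ up to $t_n^k$ gives
\[
	D_{h_\ell}^\varepsilon(t_n^k) - D_{h_\ell}^\varepsilon(t_n) = h_\ell \sum_{j=0}^{k-1} \mu(D_{h_\ell}^\varepsilon(t_n^j)) + \varepsilon\sqrt{h_\ell} \sum_{j=0}^{k-1} \sigma(D_{h_\ell}^\varepsilon(t_n^j)) W_n^j,
\]
which isolates a pure drift contribution and a stochastic contribution.

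Next I would take expectations and argue that the stochastic sum vanishes. Each $D_{h_\ell}^\varepsilon(t_n^j)$ is $\mathcal{F}_{t_n^j}$-measurable while $W_n^j$ is independent of $\mathcal{F}_{t_n^j}$ with $\E[W_n^j] = 0$; conditioning on $\mathcal{F}_{t_n^j}$ therefore yields $\E[\sigma(D_{h_\ell}^\varepsilon(t_n^j)) W_n^j] = \E[\sigma(D_{h_\ell}^\varepsilon(t_n^j))]\,\E[W_n^j] = 0$ for every $j$, so the entire martingale part drops out and
\[
	\E\left[D_{h_\ell}^\varepsilon(t_n^k) - D_{h_\ell}^\varepsilon(t_n)\right] = h_\ell \sum_{j=0}^{k-1} \E\left[\mu(D_{h_\ell}^\varepsilon(t_n^j))\right].
\]

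It then remains to bound the drift terms uniformly. Using the linear growth bound $|\mu(x)|^2 \le a(1+|x|^2)$, hence $|\mu(x)| \le \sqrt{a}\,(1+|x|)$, together with the uniform fourth-moment estimate of Lemma \ref{lem:moment_bound} and Jensen's inequality (which controls $\E|D_{h_\ell}^\varepsilon(t_n^j)|$ by $C^{1/4}$), I would obtain $\E|\mu(D_{h_\ell}^\varepsilon(t_n^j))| \le \sqrt{a}\,(1+C^{1/4})$, a constant independent of $j$, $n$, and $\ell$. Passing the absolute value inside the sum and using $k \le M$, the sum of at most $M$ terms each of size $O(h_\ell)$ delivers the claimed bound $C M h_\ell$ with $C$ depending only on $a,b,T,m,D(0)$.

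There is no genuine analytic obstacle here; the lemma is a short local-error estimate. The only points requiring care are the vanishing of the stochastic increment in expectation, which relies on the adaptedness of the state and the independence and zero mean of $W_n^j$, and the uniform-in-level control of $\E|\mu(D_{h_\ell}^\varepsilon(t_n^j))|$, which is exactly what Lemma \ref{lem:moment_bound} supplies; both are already in hand.
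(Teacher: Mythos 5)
Your proposal is correct and follows essentially the same route as the paper's proof: iterate the one-step recursion \eqref{eq:d11}, observe that the martingale part has zero expectation, bound the drift via the linear growth condition $|\mu(x)|\le\sqrt{a}(1+|x|)$, and invoke Lemma \ref{lem:moment_bound} together with $k\le M$. The only difference is that you spell out the conditioning argument for why the diffusion terms vanish in expectation, which the paper states without elaboration.
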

\begin{proof}
Iterating \eqref{eq:d11} yields
\begin{align*}
 \left|\E  \left[D_{h_{\ell}}^\varepsilon(t_{n}^{k})-D_{h_{\ell}}^\varepsilon(t_{n})\right] \right| &\le \left|\E\left[\sum_{i=0}^{k-1}\mu(D_{h_{\ell}}^\varepsilon(t_{n}^i)){h_\ell}\right]\right| + \left|\E\left[ \varepsilon\sqrt{h_\ell}\sum_{i=0}^{k-1}\sigma(D_{h_{\ell}}^\varepsilon(t_{n}^i))W_{n}^{i}\right]\right|\\
&\le \sum_{i=0}^{k-1} \E\left[ |\mu(D_{h_{\ell}}^\varepsilon(t_{n}^i)){h_\ell}|\right]\\
&\le h_{\ell} \sqrt{a}\sum_{i=0}^{k-1}(1+\E[|D_{h_{\ell}}^\varepsilon(t_{n}^i)|]),
\end{align*}
where the first inequality is simply the triangle inequality, the second follows from the triangle inequality combined with the observation that the expectations of the diffusion terms are zero, and the third inequality follows from our running assumptions.
The proof is completed by using Lemma \ref{lem:moment_bound} and recalling that $k \le M$.
\end{proof}

\begin{lemma}\label{lem:local_error} 
	\[
		 \max_{\substack{0\le n\le M^{\ell-1}\\ 1\le k\le M}}  \E [|D_{h_{\ell}}^\varepsilon(t_{n}^k) - D_{h_{\ell}}^\varepsilon(t_{n})|^4|]\le C_1M^4h_{\ell}^4+C_2\varepsilon^4M^2h_{\ell}^{2}, 
	\]
where $C_1$ and $C_2$ are positive constants that only depend on $a,b,T,m,D(0)$.
 \end{lemma}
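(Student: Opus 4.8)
The plan is to represent the fine-grid increment in continuous-time form using \eqref{eq:48907528} and then estimate the drift and diffusion contributions separately. Since $t_n^k = (nM+k)h_\ell$ is a fine grid point, the continuous scheme gives
\[
D_{h_\ell}^\varepsilon(t_n^k) - D_{h_\ell}^\varepsilon(t_n) = \int_{t_n}^{t_n^k} \mu(D_{h_\ell}^\varepsilon(\eta_{h_\ell}(s)))\,ds + \varepsilon\int_{t_n}^{t_n^k}\sigma(D_{h_\ell}^\varepsilon(\eta_{h_\ell}(s)))\,dW(s).
\]
Using the elementary bound $|x+y|^4\le 8(|x|^4+|y|^4)$, it then suffices to estimate the fourth moment of each of the two integrals, recalling throughout that the interval length is $t_n^k - t_n = kh_\ell \le Mh_\ell$.

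For the drift term I would apply H\"older's inequality in the form $|\int_{t_n}^{t_n^k} g\,ds|^4 \le (kh_\ell)^3\int_{t_n}^{t_n^k}|g|^4\,ds$, take expectations, and then invoke the growth condition $|\mu(x)|^2\le a(1+|x|^2)$ together with the uniform fourth-moment bound of Lemma \ref{lem:moment_bound}. This yields $\E[|\int_{t_n}^{t_n^k}\mu\,ds|^4]\le C(kh_\ell)^4 \le C_1 M^4 h_\ell^4$, which produces the first term in the claim. This part is entirely routine and parallels the estimates already carried out in the proofs of Lemmas \ref{lem:moment_bound} and \ref{lem:var_bound}.

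The only place where genuine care is needed is the diffusion term, and here the decisive step is to apply the Burkholder-Davis-Gundy inequality to pass from the stochastic integral to its quadratic variation, giving $\E[|\varepsilon\int_{t_n}^{t_n^k}\sigma\,dW|^4]\le \varepsilon^4 C\,\E[(\int_{t_n}^{t_n^k}|\sigma|^2\,ds)^2]$. A further H\"older estimate $(\int_{t_n}^{t_n^k}|\sigma|^2\,ds)^2\le (kh_\ell)\int_{t_n}^{t_n^k}|\sigma|^4\,ds$, followed by the growth condition $|\sigma(x)|^2 \le b(1+|x|^2)$ and Lemma \ref{lem:moment_bound}, gives $\E[|\varepsilon\int_{t_n}^{t_n^k}\sigma\,dW|^4]\le C_2\varepsilon^4(kh_\ell)^2\le C_2\varepsilon^4 M^2 h_\ell^2$, the second term. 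Combining the two estimates, relabelling constants, and taking the maximum over $n$ and $k$ (the bounds being uniform in both since $k\le M$ and all expectations are controlled by the same moment bound) completes the proof. The essential point is that the factor $\varepsilon^4$ attaches \emph{only} to the diffusion contribution, and it is precisely the BDG step—estimating the stochastic integral through its quadratic variation rather than through a cruder pathwise or $L^2$ bound—that prevents the diffusion term from degrading to order $\varepsilon^2$ and thereby supplies the favorable power of $\varepsilon$ that ultimately drives Theorem \ref{thm:var}.
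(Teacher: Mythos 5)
Your proposal is correct and follows essentially the same route as the paper: the paper iterates the discrete recursion \eqref{eq:d11} to write the increment as a drift sum plus a martingale sum, bounds the drift sum exactly as you do, and controls the fourth moment of the martingale sum with a discrete $L^4$ moment inequality for sums of martingale differences (Lemma 3.8 of \cite{hutzenthaler2012strong}), which plays precisely the role of your continuous Burkholder--Davis--Gundy step. The resulting bounds $C_1M^4h_\ell^4$ and $C_2\varepsilon^4M^2h_\ell^2$, and the observation that the $\varepsilon^4$ attaches only to the diffusion part, match the paper's argument.
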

\begin{proof}
Iterating \eqref{eq:d11} yields
\begin{align*}
D_{h_{\ell}}^\varepsilon(t_{n}^{k})-D_{h_{\ell}}^\varepsilon(t_{n}) = \sum_{i=0}^{k-1}\mu(D_{h_{\ell}}^\varepsilon(t_{n}^i)){h_\ell} +  \varepsilon\sqrt{h_\ell}\sum_{i=0}^{k-1}\sigma(D_{h_{\ell}}^\varepsilon(t_{n}^i))W_{n}^{i}.
\end{align*}
Denoting  $\| X \|_{L^4(\Omega, \R^d)}= \left( \E[|X|^4]\right)^{1/4}$ and $\sigma^j$ to be the $j$th column of $\sigma$, we use the inequality $(a+b)^4 \le 8a^4  + 8b^4$ to conclude
\begin{align*}
 \E\big[ |D_{h_{\ell}}^\varepsilon(t_{n}^{k})-&D_{h_{\ell}}^\varepsilon(t_{n}) |^4\big] \le 8M^3 \sum_{i=0}^{k-1}\E\left[ |\mu(D_{h_{\ell}}^\varepsilon(t_{n}^i)){h_\ell}|^4\right] + 8 \varepsilon^4 h_{\ell}^2\E\left[ \left|\sum_{i=0}^{k-1}\sigma(D_{h_{\ell}}^\varepsilon(t_{n}^i))W_{n}^{i}\right|^4\right]\\
&\le C(a,b,T,D(0))M^4h_{\ell}^4+ 2048\varepsilon^4 h_{\ell}^2 \left(\sum_{i=0}^{M-1}\sum_{j=1}^{m}\|\sigma^j(D_{h_{\ell}}^\varepsilon(t_{n}^i))\|^2_{L^{4}(\Omega,\R^{d})}\right)^2\\
&\le C(a,b,T,D(0))^4M^4h_{\ell}^4+2048b\varepsilon^4 M^2h_{\ell}^2m^2(2+2\max_{0\le i\le M-1}\|D_{h_{\ell}}^\varepsilon(t_{n}^i)\|_{L^{4}(\Omega,\R^{d})}^4)\\
&\le C(a,b,T,D(0))^4M^4h_{\ell}^4+C_2\varepsilon^4M^2h_{\ell}^2,
\end{align*}
where the second inequality follows from Lemma \ref{lem:moment_bound} and Lemma 3.8 in \cite{hutzenthaler2012strong}, the last inequality follows from Lemma \ref{lem:moment_bound}, and  $C_1$ and $C_2$ are constants only depending on $a,b,T,m,D(0)$.
\end{proof}

The following is a Taylor expansion of the drift coefficient.
\begin{lemma}\label{lem:3terms}  Let $\mu_i(x)$ be the $i$th component of $\mu(x)$, then
	\begin{align}
	\label{eq21321214}
		 \mu_i(D_{h_{\ell}}^\varepsilon(t_{n}^k))-\mu_i(D_{h_{\ell}}^\varepsilon(t_{n}))=
A_k+B_k+E_k, 
	\end{align}
where
\begin{align*}
A_k:=\int_0^1\left[ \nabla\mu_i(D_{h_{\ell}}^\varepsilon(t_{n})+s(D_{h_{\ell}}^\varepsilon(t_{n}^k)-D_{h_{\ell}}^\varepsilon(t_{n}))) \right] ds\cdot\left(h_{\ell}\sum_{j=0}^{k-1}\mu(D_{h_{\ell}}^\varepsilon(t_{n}^j))\right),
\end{align*}
\begin{align}\label{eq:Bk}
B_k:=\nabla\mu_i(D_{h_{\ell}}^\varepsilon(t_{n}))\cdot\left(\varepsilon\sqrt{h_{\ell}}\sum_{j=0}^{k-1}\sigma(D_{h_{\ell}}^\varepsilon(t_{n}^j))W_n^j\right),
\end{align}
and
\begin{align*}
E_k:=&\left(\int_0^1\int_0^s \left[ \nabla^2\mu_i(D_{h_{\ell}}^\varepsilon(t_{n})+r(D_{h_{\ell}}^\varepsilon(t_{n}^k)-D_{h_{\ell}}^\varepsilon(t_{n})))(D_{h_{\ell}}^\varepsilon(t_{n}^k)-D_{h_{\ell}}^\varepsilon(t_{n})) \right] drds\right)\\ \notag
&\quad\cdot\left(\varepsilon\sqrt{h_{\ell}}\sum_{j=0}^{k-1}\sigma(D_{h_{\ell}}^\varepsilon(t_{n}^j))W_n^j\right).
\end{align*}
 \end{lemma}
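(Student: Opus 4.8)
The plan is to recognize the claimed identity as nothing more than the exact first- and second-order Taylor expansions of the scalar function $\mu_i$ with integral remainder, applied to the one-step increment of the approximate process. To lighten notation, abbreviate $X = D_{h_{\ell}}^\varepsilon(t_n^k)$ and $Y = D_{h_{\ell}}^\varepsilon(t_n)$. The first step is to apply the fundamental theorem of calculus to the map $s \mapsto \mu_i(Y + s(X-Y))$ on $[0,1]$, whose derivative is $\nabla\mu_i(Y+s(X-Y))\cdot(X-Y)$; this is legitimate since the running assumption bounds $\nabla^2\mu$ and hence forces $\mu_i \in C^2$. This yields the exact identity
\[
\mu_i(X) - \mu_i(Y) = \left(\int_0^1 \nabla\mu_i(Y + s(X-Y))\,ds\right) \cdot (X-Y).
\]

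Next I would substitute the explicit form of the increment obtained by iterating the one-step update \eqref{eq:d11}, namely
\[
X - Y = h_\ell \sum_{j=0}^{k-1}\mu(D_{h_\ell}^\varepsilon(t_n^j)) + \varepsilon\sqrt{h_\ell}\sum_{j=0}^{k-1}\sigma(D_{h_\ell}^\varepsilon(t_n^j))W_n^j,
\]
and split the dot product along these two summands. Contracting the averaged gradient against the drift sum reproduces $A_k$ verbatim, so only the diffusion contribution remains. To resolve that piece into $B_k$ and $E_k$, I add and subtract $\nabla\mu_i(Y)$ inside the integral: the part with $\nabla\mu_i(Y)$ frozen at the base point, contracted against $\varepsilon\sqrt{h_\ell}\sum_{j}\sigma(D_{h_\ell}^\varepsilon(t_n^j))W_n^j$, is exactly $B_k$, while the leftover carries the factor $\int_0^1[\nabla\mu_i(Y+s(X-Y)) - \nabla\mu_i(Y)]\,ds$. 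A second application of the fundamental theorem of calculus, now to the vector field $r \mapsto \nabla\mu_i(Y + r(X-Y))$ with derivative $\nabla^2\mu_i(Y+r(X-Y))(X-Y)$, rewrites each bracket as $\int_0^s \nabla^2\mu_i(Y + r(X-Y))(X-Y)\,dr$, and the resulting double integral contracted against the diffusion sum is precisely $E_k$.

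Since every manipulation is an exact equality, no estimates enter and there is no genuine obstacle; the argument is routine calculus. The only point requiring care is bookkeeping: applying the iterated fundamental theorem to the correct parametrized segment and matching the order of integration in $E_k$ (inner variable $r\in[0,s]$, outer $s\in[0,1]$) to the statement. I would emphasize that the decomposition is engineered for the later analysis: $A_k$ collects the purely deterministic, $O(h_\ell)$-scale drift increment, $B_k$ isolates the leading stochastic term (mean zero, since it is a frozen coefficient contracted against the diffusion increments) carrying an explicit factor of $\varepsilon$, and $E_k$ is a true second-order remainder controllable through the moment bounds of Lemmas \ref{lem:moment_bound} and \ref{lem:local_error}.
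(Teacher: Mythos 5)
Your proposal is correct and follows essentially the same route as the paper: a first-order Taylor expansion with integral remainder, substitution of the iterated one-step increment to split off $A_k$, and then a second Taylor expansion of the averaged gradient about the base point $D_{h_\ell}^\varepsilon(t_n)$ to separate $B_k$ from the second-order remainder $E_k$. Your ``add and subtract $\nabla\mu_i(Y)$ plus a second application of the fundamental theorem of calculus'' is exactly the paper's ``multidimensional version of Lemma \ref{lem:taylor}'' step, so there is nothing to flag.
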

\begin{proof}
Using  Taylor's  expansion (see Lemma \ref{lem:taylor} in the appendix) we see
\begin{align*}
    \mu_i(&D_{h_{\ell}}^\varepsilon(t_{n}^k))-\mu_i(D_{h_{\ell}}^\varepsilon(t_{n}))\\
    &=\int_0^1\left[ \nabla\mu_i(D_{h_{\ell}}^\varepsilon(t_{n})+s(D_{h_{\ell}}^\varepsilon(t_{n}^k)-D_{h_{\ell}}^\varepsilon(t_{n}))) \right]ds\cdot(D_{h_{\ell}}^\varepsilon(t_{n}^k)-D_{h_{\ell}}^\varepsilon(t_{n}))\\
    &=\int_0^1\left[\nabla\mu_i(D_{h_{\ell}}^\varepsilon(t_{n})+s(D_{h_{\ell}}^\varepsilon(t_{n}^k)-D_{h_{\ell}}^\varepsilon(t_{n}))) \right] ds\cdot\left(h_{\ell}\sum_{j=0}^{k-1}\mu(D_{h_{\ell}}^\varepsilon(t_{n}^j))\right)\\
    &\quad+\int_0^1\left[ \nabla\mu_i(D_{h_{\ell}}^\varepsilon(t_{n})+s(D_{h_{\ell}}^\varepsilon(t_{n}^k)-D_{h_{\ell}}^\varepsilon(t_{n}))) \right] ds\cdot\left(\varepsilon\sqrt{h_{\ell}}\sum_{j=0}^{k-1}\sigma(D_{h_{\ell}}^\varepsilon(t_{n}^j))W_n^j\right).
\end{align*}
Applying a multidimensional version of Lemma \ref{lem:taylor},
\begin{align*}
\int_0^1[& \nabla\mu_i(D_{h_{\ell}}^\varepsilon(t_{n})+s(D_{h_{\ell}}^\varepsilon(t_{n}^k)-D_{h_{\ell}}^\varepsilon(t_{n}))) ] ds\cdot\left(\varepsilon\sqrt{h_{\ell}}\sum_{j=0}^{k-1}\sigma(D_{h_{\ell}}^\varepsilon(t_{n}^j))W_n^j\right)\\
&=\nabla\mu_i(D_{h_{\ell}}^\varepsilon(t_{n}))\cdot\left(\varepsilon\sqrt{h_{\ell}}\sum_{j=0}^{k-1}\sigma(D_{h_{\ell}}^\varepsilon(t_{n}^j))W_n^j\right)\\
&\quad+\left(\int_0^1\int_0^s \left[ H(\mu_i) 
(D_{h_{\ell}}^\varepsilon(t_{n})+r(D_{h_{\ell}}^\varepsilon(t_{n}^k)-D_{h_{\ell}}^\varepsilon(t_{n})))(D_{h_{\ell}}^\varepsilon(t_{n}^k)-D_{h_{\ell}}^\varepsilon(t_{n})) \right] drds\right)\\
&\qquad\cdot\left(\varepsilon\sqrt{h_{\ell}}\sum_{j=0}^{k-1}\sigma(D_{h_{\ell}}^\varepsilon(t_{n}^j))W_n^j\right).
\end{align*}
Therefore,
    \begin{align*}
     \mu_i(D_{h_{\ell}}^\varepsilon(&t_{n}^k))-\mu_i(D_{h_{\ell}}^\varepsilon(t_{n}))\\ 
       &=\int_0^1[ \nabla\mu_i(D_{h_{\ell}}^\varepsilon(t_{n})+s(D_{h_{\ell}}^\varepsilon(t_{n}^k)-D_{h_{\ell}}^\varepsilon(t_{n}))) ] ds\cdot\left(h_{\ell}\sum_{j=0}^{k-1}\mu(D_{h_{\ell}}^\varepsilon(t_{n}^j))\right)\\ 
 &\quad+  \nabla\mu_i(D_{h_{\ell}}^\varepsilon(t_{n}))\cdot\left(\varepsilon\sqrt{h_{\ell}}\sum_{j=0}^{k-1}\sigma(D_{h_{\ell}}^\varepsilon(t_{n}^j))W_n^j\right)\\ 
&\quad+\left(\int_0^1\int_0^s [ \nabla^2\mu_i(D_{h_{\ell}}^\varepsilon(t_{n})+r(D_{h_{\ell}}^\varepsilon(t_{n}^k)-D_{h_{\ell}}^\varepsilon(t_{n})))(D_{h_{\ell}}^\varepsilon(t_{n}^k)-D_{h_{\ell}}^\varepsilon(t_{n})) ] drds\right)\\ 
&\qquad\cdot \left(\varepsilon\sqrt{h_{\ell}}\sum_{j=0}^{k-1}\sigma(D_{h_{\ell}}^\varepsilon(t_{n}^j))W_n^j\right)\\
&=A_k+B_k+E_k.
\qedhere
         \end{align*}
\end{proof}

The following result is similar to the $L^2$ bound  found in \cite{MT97} 
in the case where the numerical discretization method is Euler--Maruyama.

 \begin{lemma}\label{lem:L2 error} 
	\[
		 \max_{0\le n\le M^{{\ell}-1}}\E [|D_{h_{\ell}}^\varepsilon(t_{n}) - D_{h_{\ell-1}}^\varepsilon(t_{n})|^2]\le d_1 h_{\ell-1}^2+d_2\varepsilon^4 h_{\ell-1}, 
	\]
where $d_1$ and $d_2$ are positive constants that depend on $a,b,T,m,D(0)$.
 \end{lemma}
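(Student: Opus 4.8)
The plan is to run a discrete mean-square stability argument on the error process $e_n \eqdef D_{h_\ell}^\varepsilon(t_n) - D_{h_{\ell-1}}^\varepsilon(t_n)$, $0\le n\le M^{\ell-1}$, and to close it with the discrete Gronwall inequality. First I would write one coarse step of each scheme over $[t_n,t_{n+1}]$. Using $h_{\ell-1}=Mh_\ell$ and the fact that the coarse Brownian increment over $[t_n,t_{n+1}]$ is the sum of the $M$ fine increments $\sqrt{h_\ell}\,W_n^k$, both the coarse drift and the coarse diffusion increment can be written as sums of $M$ terms that structurally match the fine increment, so that $e_{n+1}-e_n=\Delta\mu_n+\Delta\sigma_n$, where $\Delta\mu_n=h_\ell\sum_{k=0}^{M-1}\big(\mu(D_{h_\ell}^\varepsilon(t_n^k))-\mu(D_{h_{\ell-1}}^\varepsilon(t_n))\big)$ and $\Delta\sigma_n=\varepsilon\sqrt{h_\ell}\sum_{k=0}^{M-1}\big(\sigma(D_{h_\ell}^\varepsilon(t_n^k))-\sigma(D_{h_{\ell-1}}^\varepsilon(t_n))\big)W_n^k$. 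The key structural observation is that each summand splits into a within-fine-step displacement $D_{h_\ell}^\varepsilon(t_n^k)-D_{h_\ell}^\varepsilon(t_n)$, controlled by Lemma \ref{lem:local_error} (and, in $L^1$, Lemma \ref{lem:local_error1}), plus the cross-level term $D_{h_\ell}^\varepsilon(t_n)-D_{h_{\ell-1}}^\varepsilon(t_n)=e_n$, controlled by Lipschitz continuity. Since $e_0=0$ and there are $M^{\ell-1}$ steps with $M^{\ell-1}h_{\ell-1}=T$, a recursion of the form $\E[|e_{n+1}|^2]\le(1+Ch_{\ell-1})\E[|e_n|^2]+C(h_{\ell-1}^3+\varepsilon^4 h_{\ell-1}^2)$ will, after Gronwall, deliver the stated bound $d_1h_{\ell-1}^2+d_2\varepsilon^4 h_{\ell-1}$ uniformly in $n$.

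Next I would expand $\E[|e_{n+1}|^2]=\E[|e_n|^2]+2\E[e_n\cdot\Delta\mu_n]+2\E[e_n\cdot\Delta\sigma_n]+\E[|\Delta\mu_n+\Delta\sigma_n|^2]$ and treat the terms separately. The diffusion cross term vanishes: conditioning on $\mathcal{F}_{t_n}$, each $W_n^k$ is mean zero and independent of everything generated up to $t_n^k$, so $\E[\Delta\sigma_n\mid\mathcal{F}_{t_n}]=0$ and hence $\E[e_n\cdot\Delta\sigma_n]=0$. For the square term I would use $|\Delta\mu_n+\Delta\sigma_n|^2\le 2|\Delta\mu_n|^2+2|\Delta\sigma_n|^2$; the off-diagonal terms in the diffusion sum again vanish by the same martingale argument, leaving a sum of conditional variances $\E[|\Delta\sigma_n|^2]=\varepsilon^2 h_\ell\sum_k\E[|\sigma(D_{h_\ell}^\varepsilon(t_n^k))-\sigma(D_{h_{\ell-1}}^\varepsilon(t_n))|^2]$. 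Combining the Lipschitz bound on $\sigma$, the within-step estimate of Lemma \ref{lem:local_error} at the $L^2$ level (via Jensen), and $Mh_\ell=h_{\ell-1}$ gives $\E[|\Delta\sigma_n|^2]=O(\varepsilon^2 h_{\ell-1}^3+\varepsilon^4 h_{\ell-1}^2+\varepsilon^2 h_{\ell-1}\E[|e_n|^2])$; here the $\varepsilon^4 h_{\ell-1}^2$ piece is precisely what accumulates to the decisive $\varepsilon^4 h_{\ell-1}$ term. The analogous Lipschitz and Cauchy--Schwarz bound for the drift gives $\E[|\Delta\mu_n|^2]=O(h_{\ell-1}^4+\varepsilon^2 h_{\ell-1}^3+h_{\ell-1}^2\E[|e_n|^2])$.

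The delicate point, and the main obstacle, is the drift cross term $\E[e_n\cdot\Delta\mu_n]$. A brute-force Young's inequality $2e_n\cdot\Delta\mu_n\le h_{\ell-1}|e_n|^2+h_{\ell-1}^{-1}|\Delta\mu_n|^2$ is not good enough: the diffusion-driven fluctuations hidden inside $\Delta\mu_n$ contribute an $O(\varepsilon^2 h_{\ell-1}^3)$ term to $\E[|\Delta\mu_n|^2]$, which after dividing by $h_{\ell-1}$ and accumulating over $T/h_{\ell-1}$ steps produces a spurious $O(\varepsilon^2 h_{\ell-1})$ term that the target bound $d_1 h_{\ell-1}^2+d_2\varepsilon^4 h_{\ell-1}$ cannot absorb once $\varepsilon^2>h_{\ell-1}$. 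To avoid this I would instead write $\E[e_n\cdot\Delta\mu_n]=\E[e_n\cdot\E[\Delta\mu_n\mid\mathcal{F}_{t_n}]]$ and compute the conditional mean carefully, applying the Taylor decomposition of Lemma \ref{lem:3terms} to each within-step increment $\mu_i(D_{h_\ell}^\varepsilon(t_n^k))-\mu_i(D_{h_\ell}^\varepsilon(t_n))=A_k+B_k+E_k$. The term $B_k$, being linear in the diffusion increment, has zero conditional mean and drops out; $A_k$ and $E_k$ are genuinely higher order and, after summation against $h_\ell$ and use of Lemmas \ref{lem:moment_bound}, \ref{lem:local_error1} and \ref{lem:local_error}, contribute only $O(h_{\ell-1}^2)$ in $L^2$, while the cross-level Lipschitz part contributes $\le\sqrt{a}\,h_{\ell-1}|e_n|$. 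This yields $|\E[\Delta\mu_n\mid\mathcal{F}_{t_n}]|\le\sqrt{a}\,h_{\ell-1}|e_n|+O(h_{\ell-1}^2)$, and then Cauchy--Schwarz together with Young's inequality give $2|\E[e_n\cdot\Delta\mu_n]|\le Ch_{\ell-1}\E[|e_n|^2]+Ch_{\ell-1}^3$, with no $\varepsilon^2 h_{\ell-1}$ contribution.

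Finally I would collect the estimates, absorbing $\varepsilon^2 h_{\ell-1}^3\le h_{\ell-1}^3$ and $h_{\ell-1}^4\le Th_{\ell-1}^3$ into the additive remainder, and $h_{\ell-1}^2\le Th_{\ell-1}$ and $\varepsilon^2 h_{\ell-1}\le h_{\ell-1}$ into the multiplicative coefficient, to reach
\[
\E[|e_{n+1}|^2]\le(1+Ch_{\ell-1})\,\E[|e_n|^2]+C\big(h_{\ell-1}^3+\varepsilon^4 h_{\ell-1}^2\big).
\]
Since $e_0=0$ and there are $M^{\ell-1}=T/h_{\ell-1}$ steps, the discrete Gronwall inequality turns the per-step additive term into $e^{CT}(h_{\ell-1}^2+\varepsilon^4 h_{\ell-1})$, uniformly in $n\le M^{\ell-1}$, which is exactly the asserted bound $d_1 h_{\ell-1}^2+d_2\varepsilon^4 h_{\ell-1}$. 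The one genuinely non-routine ingredient is the conditional treatment of the drift cross term in the third paragraph; everything else is a careful bookkeeping of the $\varepsilon$ and $h_{\ell-1}$ powers using the already-established moment and local-error lemmas.
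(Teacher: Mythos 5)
Your proposal is correct and follows essentially the same route as the paper: the same add-and-subtract decomposition of the one-step error into within-fine-step and cross-level parts, the same use of Lemmas \ref{lem:local_error1} and \ref{lem:local_error} for the local increments, and the same key step of invoking the Taylor decomposition $A_k+B_k+E_k$ of Lemma \ref{lem:3terms} so that the $B_k$ contribution to the drift cross term vanishes, thereby avoiding the spurious $\varepsilon^2 h_{\ell-1}$ term. The only cosmetic difference is that you phrase this cancellation via the conditional expectation $\E[\Delta\mu_n\mid\mathcal{F}_{t_n}]$ while the paper computes the expectations of the inner products directly and closes with Cauchy--Schwarz and $2ab\le a^2+b^2$ before applying discrete Gronwall.
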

  \begin{proof}
    For $n \le M^{\ell-1}-1$ we have
    \begin{align*}
   D_{h_{\ell}}^\varepsilon(t_{n+1})-D_{h_{\ell-1}}^\varepsilon(t_{n+1}) &=D_{h_{\ell}}^\varepsilon(t_{n})-D_{h_{\ell-1}}^\varepsilon(t_{n}) + h_{\ell}\sum_{k=0}^{M-1}( \mu(D_{h_{\ell}}^\varepsilon(t_{n}^k))-\mu(D_{h_{\ell-1}}^\varepsilon(t_{n})))\\
   &\quad+\varepsilon\sqrt{h_\ell}\sum_{k=0}^{M-1} (\sigma(D_{h_{\ell}}^\varepsilon(t_{n}^k))-\sigma(D_{h_{\ell-1}}^\varepsilon(t_{n})))W_{n}^{k}\\
     &=D_{h_{\ell}}^\varepsilon(t_{n})-D_{h_{\ell-1}}^\varepsilon(t_{n}) + h_{\ell}\sum_{k=0}^{M-1}( \mu(D_{h_{\ell}}^\varepsilon(t_{n}^k))-\mu(D_{h_{\ell}}^\varepsilon(t_{n})))\\
     &\quad+h_{\ell}\sum_{k=0}^{M-1}( \mu(D_{h_{\ell}}^\varepsilon(t_{n}))-\mu(D_{h_{\ell-1}}^\varepsilon(t_{n})))\\
     &\quad +\varepsilon\sqrt{h_\ell} \sum_{k=0}^{M-1} (\sigma(D_{h_{\ell}}^\varepsilon(t_{n}^k))-\sigma(D_{h_{\ell}}^\varepsilon(t_{n})))W_{n}^{k}\\
     &\quad+\varepsilon\sqrt{h_\ell} \sum_{k=0}^{M-1} (\sigma(D_{h_{\ell}}^\varepsilon(t_{n}))-\sigma(D_{h_{\ell-1}}^\varepsilon(t_{n})))W_{n}^{k},
    \end{align*}
    where the final equality simply comes from adding and subtracting some terms.
After some manipulation the above implies
   \begin{align*}
      |D_{h_{\ell}}^\varepsilon(t_{n+1})&-D_{h_{\ell-1}}^\varepsilon(t_{n+1})|^2\le|D_{h_{\ell}}^\varepsilon(t_{n})-D_{h_{\ell-1}}^\varepsilon(t_{n}) |^2+ 4h_{\ell}^2|\sum_{k=0}^{M-1} ( \mu(D_{h_{\ell}}^\varepsilon(t_{n}^k))-\mu(D_{h_{\ell}}^\varepsilon(t_{n})))|^2\\
     &\quad+4h_{\ell}^2|\sum_{k=0}^{M-1}( \mu(D_{h_{\ell}}^\varepsilon(t_{n}))-\mu(D_{h_{\ell-1}}^\varepsilon(t_{n})))|^2 \\
     &\quad +4|\varepsilon\sqrt{h_\ell} \sum_{k=0}^{M-1} (\sigma(D_{h_{\ell}}^\varepsilon(t_{n}^k))-\sigma(D_{h_{\ell}}^\varepsilon(t_{n})))W_{n}^{k}|^2\\
     &\quad+4|\varepsilon\sqrt{h_\ell} \sum_{k=0}^{M-1} (\sigma(D_{h_{\ell}}^\varepsilon(t_{n}))-\sigma(D_{h_{\ell-1}}^\varepsilon(t_{n})))W_{n}^{k}|^2\\
     &\quad+2h_\ell\sum_{k=0}^{M-1} \langle D_{h_{\ell}}^\varepsilon(t_{n})-D_{h_{\ell-1}}^\varepsilon(t_{n}),\mu(D_{h_{\ell}}^\varepsilon(t_{n}^k))-\mu(D_{h_{\ell}}^\varepsilon(t_{n}))\rangle\\
     &\quad+ 2h_\ell\sum_{k=0}^{M-1} \langle D_{h_{\ell}}^\varepsilon(t_{n})-D_{h_{\ell-1}}^\varepsilon(t_{n}),\mu(D_{h_{\ell}}^\varepsilon(t_{n}))-\mu(D_{h_{\ell-1}}^\varepsilon(t_{n}))\rangle\\
     &\quad+2\varepsilon\sqrt{h_\ell} \sum_{k=0}^{M-1} \langle D_{h_{\ell}}^\varepsilon(t_{n})-D_{h_{\ell-1}}^\varepsilon(t_{n}) , (\sigma(D_{h_{\ell}}^\varepsilon(t^k_{n}))-\sigma(D_{h_{\ell}}^\varepsilon(t_{n})))W_{n}^{k}\rangle\\
     &\quad+2\varepsilon\sqrt{h_\ell} \sum_{k=0}^{M-1} \langle D_{h_{\ell}}^\varepsilon(t_{n})-D_{h_{\ell-1}}^\varepsilon(t_{n}) , (\sigma(D_{h_{\ell}}^\varepsilon(t_{n}))-\sigma(D_{h_{\ell-1}}^\varepsilon(t_{n})))W_{n}^{k}\rangle,
    \end{align*}
    where $\langle u, v\rangle$ denotes the inner product of $u$ and $v$.
   Therefore,
  \begin{align*}
     \E[|D_{h_{\ell}}^\varepsilon&(t_{n+1})-D_{h_{\ell-1}}^\varepsilon(t_{n+1})|^2]\\
     &\le\E[|D_{h_{\ell}}^\varepsilon(t_{n})-D_{h_{\ell-1}}^\varepsilon(t_{n}) |^2]+ 4Mh_{\ell}^2\sum_{k=0}^{M-1} \E[|\mu(D_{h_{\ell}}^\varepsilon(t_{n}^k)-\mu(D_{h_{\ell}}^\varepsilon(t_{n}))|^2]\\
     &\quad+4Mh_{\ell}^2\sum_{k=0}^{M-1} \E[|\mu(D_{h_{\ell}}^\varepsilon(t_{n}))-\mu(D_{h_{\ell-1}}^\varepsilon(t_{n}))|^2]\\
     & \quad +4\varepsilon^2h_\ell \sum_{k=0}^{M-1} \E[|(\sigma(D_{h_{\ell}}^\varepsilon(t_{n}^k))-\sigma(D_{h_{\ell}}^\varepsilon(t_{n})))W_{n}^{k}|^2]\\
     &\quad+4\varepsilon^2h_{\ell}\sum_{k=0}^{M-1} \E[|(\sigma(D_{h_{\ell}}^\varepsilon(t_{n}))-\sigma(D_{h_{\ell-1}}^\varepsilon(t_{n})))W_{n}^{k}|^2]\\
     &\quad+2h_\ell\sum_{k=0}^{M-1} \E[\langle D_{h_{\ell}}^\varepsilon(t_{n})-D_{h_{\ell-1}}^\varepsilon(t_{n}),\mu(D_{h_{\ell}}^\varepsilon(t_{n}^k))-\mu(D_{h_{\ell}}^\varepsilon(t_{n}))\rangle]\\
     &\quad+ 2h_\ell\sum_{k=0}^{M-1} \E[\langle D_{h_{\ell}}^\varepsilon(t_{n})-D_{h_{\ell-1}}^\varepsilon(t_{n}),\mu(D_{h_{\ell}}^\varepsilon(t_{n}))-\mu(D_{h_{\ell-1}}^\varepsilon(t_{n}))\rangle],
     \end{align*}
     where we used that $W_n^k$ is independent from  $D_{h_{\ell}}^\varepsilon(t_n), D_{h_{\ell-1}}^\varepsilon(t_n),$ and $D_{h_\ell}^\varepsilon(t_n^k)$.
     Hence, by Lemma \ref{lem:local_error}, there are positive constants $C_1$ and $C_2$ that only depend on $a,b,T,m,D(0)$, such that
      \begin{align*}   
      \E[|D_{h_{\ell}}^\varepsilon&(t_{n+1})-D_{h_{\ell-1}}^\varepsilon(t_{n+1})|^2]\\
     &\le \E[|D_{h_{\ell}}^\varepsilon(t_{n})-D_{h_{\ell-1}}^\varepsilon(t_{n}) |^2]+ 4aC_1M^4h_{\ell}^4+4aC_2\varepsilon^2M^3h_{\ell}^3\\
     &\quad +4aMh_{\ell}^2\sum_{k=0}^{M-1} \E[|D_{h_{\ell}}^\varepsilon(t_{n})-D_{h_{\ell-1}}^\varepsilon(t_{n}) |^2] +4bC_1\varepsilon^2M^3h_{\ell}^3+4bC_2\varepsilon^4M^2h_{\ell}^2 \\
     &\quad +4b\varepsilon^2h_\ell \sum_{k=0}^{M-1} \E[|D_{h_{\ell}}^\varepsilon(t_{n})-D_{h_{\ell-1}}^\varepsilon(t_{n}) |^2]\\
     &\quad +2h_\ell\sum_{k=0}^{M-1} \E[\langle D_{h_{\ell}}^\varepsilon(t_{n})-D_{h_{\ell-1}}^\varepsilon(t_{n}),\mu(D_{h_{\ell}}^\varepsilon(t_{n}^k))-\mu(D_{h_{\ell}}^\varepsilon(t_{n}))\rangle]\\
     &\quad + 2h_\ell \sqrt{a} M\E[|D_{h_{\ell}}^\varepsilon(t_{n})-D_{h_{\ell-1}}^\varepsilon(t_{n}) |^2],
     \end{align*}
     where the final term follows from the Cauchy-Schwarz inequality.  Continuing,
    \begin{align}
      \E[|D_{h_{\ell}}^\varepsilon&(t_{n+1})-D_{h_{\ell-1}}^\varepsilon(t_{n+1})|^2]\notag\\
     &\le \E[|D_{h_{\ell}}^\varepsilon(t_{n})-D_{h_{\ell-1}}^\varepsilon(t_{n}) |^2]+(2\sqrt{a}+4aMh_{\ell}+\varepsilon^24b)Mh_{\ell}\E[|D_{h_{\ell}}^\varepsilon(t_{n})-D_{h_{\ell-1}}^\varepsilon(t_{n}) |^2]\notag\\
     &\quad  +4aC_1M^4h_{\ell}^4+4aC_2\varepsilon^2M^3h_{\ell}^3+4bC_1\varepsilon^2M^3h_{\ell}^3+4bC_2\varepsilon^4M^2h_{\ell}^2\notag\\
      &\quad +2h_\ell\sum_{k=0}^{M-1} \E[\langle D_{h_{\ell}}^\varepsilon(t_{n})-D_{h_{\ell-1}}^\varepsilon(t_{n}),\mu(D_{h_{\ell}}^\varepsilon(t_{n}^k))-\mu(D_{h_{\ell}}^\varepsilon(t_{n}))\rangle].\label{eq:876}
    \end{align}
We turn to the term \eqref{eq:876}. Applying Lemma \ref{lem:3terms}, we know
    \begin{align*}
     &\quad\mu_i(D_{h_{\ell}}^\varepsilon(t_{n}^k))-\mu_i(D_{h_{\ell}}^\varepsilon(t_{n}))=A_k+B_k+E_k\notag.
         \end{align*}
         Also we notice,
        \[
        \E[|A_k|^2]\le K_1M^2h_{\ell}^2,
        \]
         where $K_1$ is a constant that only depends on $a,b,T,m,D(0)$. Utilizing Lemmas \ref{lem:moment_bound} and \ref{lem:local_error}
        \begin{align}
        \label{eq3214322432}
        \begin{split}
        \E[|E_k|^2]&\leq ah_{\ell}\varepsilon^2\E\left[|D_{h_{\ell}}^\varepsilon(t_{n}^k)-D_{h_{\ell}}^\varepsilon(t_{n})|^2\sum_{j=0}^{k-1}|\sigma(D_{h_{\ell}}^\varepsilon(t_{n}^j))W_n^j |^2\right]\\ 
  &\le ah_{\ell}\varepsilon^2\left(\E[|D_{h_{\ell}}^\varepsilon(t_{n}^k)-D_{h_{\ell}}^\varepsilon(t_{n})|^4]\right)^{1/2}\left(k\sum_{j=0}^{k-1}\E\left[|\sigma(D_{h_{\ell}}^\varepsilon(t_{n}^j))W_n^j |^4\right]\right)^{1/2}\\ 
  &\le K_2M^3h^3_{\ell}\varepsilon^2+K_3M^2h^2_{\ell}\varepsilon^4, 
  \end{split}
        \end{align}
  where $K_2$ and $K_3$ are constants depending only on $a,b,T,m,D(0)$. As a result,
    \begin{align}
     2h_{\ell}\sum_{k=0}^{M-1} &\E[\langle D_{h_{\ell}}^\varepsilon(t_{n})-D_{h_{\ell-1}}^\varepsilon(t_{n}),\mu(D_{h_{\ell}}^\varepsilon(t_{n}^{k}))-\mu(D_{h_{\ell}}^\varepsilon(t_{n}))\rangle]\notag \\
     &=2h_{\ell}\sum_{k=0}^{M-1} \E[\langle D_{h_{\ell}}^\varepsilon(t_{n})-D_{h_{\ell-1}}^\varepsilon(t_{n}),A_k\rangle]\notag\\
     &\quad+2h_{\ell}\sum_{k=0}^{M-1} \E[\langle D_{h_{\ell}}^\varepsilon(t_{n})-D_{h_{\ell-1}}^\varepsilon(t_{n}),B_k\rangle]\label{eq:7686}\\
       &\quad+2h_{\ell}\sum_{k=0}^{M-1} \E[\langle D_{h_{\ell}}^\varepsilon(t_{n})-D_{h_{\ell-1}}^\varepsilon(t_{n}),E_k\rangle]\notag\\
     &\le 2Mh_{\ell}\E[|D_{h_{\ell}}^\varepsilon(t_{n})-D_{h_{\ell-1}}^\varepsilon(t_{n})|^2]+h_{\ell}\sum_{k=0}^{M-1}\E [|A_k|^2]+h_{\ell}\sum_{k=0}^{M-1}\E [|E_k|^2]\notag\\
     &\le 2Mh_{\ell}\E[|D_{h_{\ell}}^\varepsilon(t_{n})-D_{h_{\ell-1}}^\varepsilon(t_{n})|^2]+ K_1M^3h_{\ell}^3+K_2M^4h^4_{\ell}\varepsilon^2+K_3M^3h^3_{\ell}\varepsilon^4,\notag
    \end{align}
where the first inequality follows from: (i) the observation that the expectation \eqref{eq:7686}  is zero, (ii) the Cauchy-Schwarz inequality, and (iii) the inequality $2ab\le a^2+b^2$.   Combining all the estimates above, we find
   \begin{align*}
    \E[|D_{h_{\ell}}^\varepsilon&(t_{n+1})-D_{h_{\ell-1}}^\varepsilon(t_{n+1})|^2]\\
     &\le\E\left[|D_{h_{\ell}}^\varepsilon(t_{n})-D_{h_{\ell-1}}^\varepsilon(t_{n}) |^2\right]\\
     &\quad+(2+2\sqrt{a}+4aMh_{\ell}+4b\varepsilon^2)Mh_{\ell}\E\left[|D_{h_{\ell}}^\varepsilon(t_{n})-D_{h_{\ell-1}}^\varepsilon(t_{n}) |^2\right]\\
     &\quad +4aC_1M^4h_{\ell}^4+4aC_2\varepsilon^2M^3h_{\ell}^3+4bC_1\varepsilon^2M^3h_{\ell}^3+4bC_2\varepsilon^4M^2h_{\ell}^2\\
      &\quad +K_1M^3h_{\ell}^3+K_2M^4h^4_{\ell}\varepsilon^2+K_3M^3h^3_{\ell}\varepsilon^4.
   \end{align*}
   Noting that the dominant terms above are of order $h_{\ell-1}^2 \varepsilon^4$ and $h_{\ell-1}^3$, an application of Gronwall's inequality completes the proof.
  \end{proof}

	We are now ready to prove our main result.

\begin{proof}[Proof of Theorem \ref{thm:var}]
		Following \cite{AHS2014}, we first prove the result in the case that $f(x) = x_i$ for some $i \in \{1,\dots,d\}$.  We have that for $n \le M^{\ell-1}-1$,
    \begin{align*}
   [&D_{h_{\ell}}^\varepsilon(t_{n+1})-D_{h_{\ell-1}}^\varepsilon(t_{n+1})]_i\\
     &=[D_{h_{\ell}}^\varepsilon(t_{n})-D_{h_{\ell-1}}^\varepsilon(t_{n})]_i + h_{\ell}\sum_{k=0}^{M-1} ( \mu_i(D_{h_{\ell}}^\varepsilon(t_{n}^k))-\mu_i(D_{h_{\ell}}^\varepsilon(t_{n})))\\
     &\quad+h_{\ell}\sum_{k=0}^{M-1} ( \mu_i(D_{h_{\ell}}^\varepsilon(t_{n}))-\mu_i(D_{h_{\ell-1}}^\varepsilon(t_{n}))) +\varepsilon\sqrt{h_\ell} \sum_{k=0}^{M-1} (\sigma_i(D_{h_{\ell}}^\varepsilon(t_{n}^k))-\sigma_i(D_{h_{\ell}}^\varepsilon(t_{n})))W_{n}^{k}\\
     &\quad+\varepsilon\sqrt{h_\ell} \sum_{k=0}^{M-1} (\sigma_iD_{h_{\ell}}^\varepsilon(t_{n}))-\sigma_i(D_{h_{\ell-1}}^\varepsilon(t_{n})))W_{n}^{k},
    \end{align*}
where $\mu_i$ is the $i$th component of $\mu$ and $\sigma_i$ is the $i$th row of $\sigma$.
As a result, and after some manipulation,
\begin{align}
  \textsf{Var}&([D_{h_{\ell}}^\varepsilon(t_{n+1})-D_{h_{\ell-1}}^\varepsilon(t_{n+1})]_i)\notag\\ 
  &\le (1+Mh_\ell) \textsf{Var}([D_{h_{\ell}}^\varepsilon(t_{n})-D_{h_{\ell-1}}^\varepsilon(t_{n})]_i) \notag\\
  &\quad +4 h_{\ell}^2M\sum_{k=0}^{M-1} \textsf{Var}( \mu_i(D_{h_{\ell}}^\varepsilon(t_{n}^k))-\mu_i(D_{h_{\ell}}^\varepsilon(t_{n}))) \label{eq:AA1}\\ 
     &\quad +(4Mh_{\ell}+1)Mh_{\ell}\textsf{Var}( \mu_i(D_{h_{\ell}}^\varepsilon(t_{n}))-\mu_i(D_{h_{\ell-1}}^\varepsilon(t_{n})))\label{eq:AA2}\\ 
      &\quad  +4\varepsilon^2h_{\ell} \sum_{k=0}^{M-1} \textsf{Var}((\sigma_i(D_{h_{\ell}}^\varepsilon(t_{n}^k))-\sigma_i(D_{h_{\ell}}^\varepsilon(t_{n})))W_{n}^{k})\label{eq:AA3}\\ 
     &\quad +4\varepsilon^2h_{\ell} \sum_{k=0}^{M-1} \textsf{Var}( (\sigma_i(D_{h_{\ell}}^\varepsilon(t_{n}))-\sigma_i(D_{h_{\ell-1}}^\varepsilon(t_{n})))W_{n}^{k})\label{eq:AA4}\\ 
     &\quad +2\textsf{Cov}([D_{h_{\ell}}^\varepsilon(t_{n})-D_{h_{\ell-1}}^\varepsilon(t_{n})]_i,h_{\ell}\sum_{k=0}^{M-1} ( \mu_i(D_{h_{\ell}}^\varepsilon(t_{n}^k))-\mu_i(D_{h_{\ell}}^\varepsilon(t_{n}))))\label{eq:AA5}. 
    \end{align}
	 We must bound each expression on the right-hand side in order to apply Gronwall's inequality.  We first consider \eqref{eq:AA3}, which leads to a dominant term.  Lemma~\ref{lem:local_error} implies that
\begin{align*}
	\sum_{k=0}^{M-1} \textsf{Var}((\sigma_i(D_{h_{\ell}}^\varepsilon(t_{n}^k))-\sigma_i(D_{h_{\ell}}^\varepsilon(t_{n})))W_{n}^{k})  & \le\sum_{k=0}^{M-1} \E[|\sigma_i(D_{h_{\ell}}^\varepsilon(t_{n}^k))-\sigma_i(D_{h_{\ell}}^\varepsilon(t_{n})))|^2]\\
           &\le Mb(c_1M^2h_{\ell}^2+c_2\varepsilon^2Mh_{\ell}).
\end{align*}
Similarly, by Lemma~\ref{lem:L2 error} we  may bound \eqref{eq:AA4}, which also yields a dominant term,
		 \begin{align*}
 \sum_{k=0}^{M-1} \textsf{Var}( (\sigma_i(D_{h_{\ell}}^\varepsilon(t_{n}))-\sigma_i(D_{h_{\ell-1}}^\varepsilon(t_{n})))W_{n}^{k}) &\le \sum_{k=0}^{M-1} \E[|\sigma_i(D_{h_{\ell}}^\varepsilon(t_{n}))-\sigma_i(D_{h_{\ell}-1}^\varepsilon(t_{n})))|^2]\\
           &\le Mb(d_1M^2h_{\ell}^2+d_2\varepsilon^4Mh_{\ell}),
		\end{align*}
where $c_1,c_2,d_1$ and $d_2$ are positive constants only depending on $a,b,T,m,D(0)$.

\vspace{.1in}

		Turning to \eqref{eq:AA1}, we have the following lemma.
		
\begin{lemma}\label{lemma:third_term}
	\begin{align*}&\displaystyle \textsf{Var}\left(  \mu_i(D_{h_{\ell}}^\varepsilon(t_{n}^k))-\mu_i(D_{h_{\ell}}^\varepsilon(t_{n}))\right)\le CMh_{\ell}\varepsilon^2,
            \end{align*}
 where $C$ is a positive constant that only depends on $a,b,T,d,m,D(0)$.
 \end{lemma}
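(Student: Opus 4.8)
The plan is to start from the Taylor decomposition of Lemma \ref{lem:3terms}, which writes $\mu_i(D_{h_\ell}^\varepsilon(t_n^k)) - \mu_i(D_{h_\ell}^\varepsilon(t_n)) = A_k + B_k + E_k$, and then bound the variance of each piece separately. Since standard deviation is subadditive (Minkowski's inequality), one has $\textsf{Var}(A_k+B_k+E_k) \le 3(\textsf{Var}(A_k) + \textsf{Var}(B_k) + \textsf{Var}(E_k))$, so it suffices to show that each of the three variances is $O(Mh_\ell\varepsilon^2)$. The reason for working with variances rather than second moments is that the first-order deterministic bias, which is only $O(Mh_\ell) = O(h_{\ell-1})$ and would otherwise dominate, is removed.

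For the stochastic term $B_k$ of \eqref{eq:Bk} and the remainder $E_k$, the crude estimate $\textsf{Var}(\cdot) \le \E[|\cdot|^2]$ already suffices. For $B_k$, the factor $\nabla\mu_i(D_{h_\ell}^\varepsilon(t_n))$ is bounded by $\sqrt a$, and the martingale structure of $\sum_{j=0}^{k-1}\sigma(D_{h_\ell}^\varepsilon(t_n^j))W_n^j$ (each $W_n^j$ is mean-zero and independent of everything indexed before $j$) makes the cross terms vanish, giving $\E[|B_k|^2] \le a\varepsilon^2 h_\ell\sum_{j=0}^{k-1}\E[|\sigma(D_{h_\ell}^\varepsilon(t_n^j))|^2]$; the growth bound on $\sigma$ together with Lemma \ref{lem:moment_bound} and $k\le M$ then yield $\E[|B_k|^2] = O(Mh_\ell\varepsilon^2)$. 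For $E_k$, I would quote \eqref{eq3214322432} directly, namely $\E[|E_k|^2] \le K_2 M^3h_\ell^3\varepsilon^2 + K_3 M^2h_\ell^2\varepsilon^4$, and observe that since $Mh_\ell = h_{\ell-1}\le T$ and $\varepsilon<1$ both terms are $O(Mh_\ell\varepsilon^2)$.

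The hard part is $A_k$, because here the naive bound $\E[|A_k|^2]\le K_1M^2h_\ell^2 = O(h_{\ell-1}^2)$ is far too large for the target $O(h_{\ell-1}\varepsilon^2)$. This is exactly where variance, not $L^2$ distance, must be used: $A_k$ is nearly deterministic, its randomness entering only through the $O(\varepsilon)$ fluctuations of the path around the deterministic Euler trajectory $z_{h_\ell}$ of \eqref{gen_approx_ode}. I would therefore let $\bar A_k$ denote the deterministic quantity obtained by replacing every $D_{h_\ell}^\varepsilon$ in $A_k$ by $z_{h_\ell}$, and use that $\bar A_k$ is constant to write $\textsf{Var}(A_k) = \textsf{Var}(A_k - \bar A_k)\le \E[|A_k-\bar A_k|^2]$. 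Writing $A_k = P\cdot Q$ with $P$ the integrated gradient of $\mu_i$ and $Q = h_\ell\sum_{j=0}^{k-1}\mu(D_{h_\ell}^\varepsilon(t_n^j))$, and splitting $A_k - \bar A_k = (P-\bar P)\cdot Q + \bar P\cdot(Q-\bar Q)$, I would estimate $|P - \bar P|$ using the Lipschitz continuity of $\nabla\mu_i$ (the bounded Hessian $|\nabla^2\mu|^2\le a$) and $|Q-\bar Q|$ using the Lipschitz continuity of $\mu$, in both cases bringing in $\sup_s|D_{h_\ell}^\varepsilon(s)-z_{h_\ell}(s)|$. Lemma \ref{lem:var_bound} controls this supremum in $L^2$ by $O(\varepsilon^2)$; since $|\bar P|\le\sqrt a$ and $\E[|Q|^2] = O((Mh_\ell)^2)$, the piece $\bar P\cdot(Q-\bar Q)$ contributes $O((Mh_\ell)^2\varepsilon^2) = O(Mh_\ell\varepsilon^2)$, using $Mh_\ell\le T$.

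The one technical wrinkle, and the main obstacle, is the cross term $\E[|P-\bar P|^2|Q|^2]$: because $P-\bar P$ and $Q$ are correlated, decoupling them by Cauchy--Schwarz calls for an $L^4$ rather than $L^2$ estimate of $\sup_s|D_{h_\ell}^\varepsilon(s)-z_{h_\ell}(s)|$. Such a bound, $\E[\sup_s|D_{h_\ell}^\varepsilon(s)-z_{h_\ell}(s)|^4] = O(\varepsilon^4)$, follows from the same Burkholder--Davis--Gundy and discrete Gronwall argument used for Lemma \ref{lem:var_bound} carried out with fourth powers, the required fourth moments being supplied by Lemma \ref{lem:moment_bound}; combined with $\E[|Q|^4] = O((Mh_\ell)^4)$ this again gives $O((Mh_\ell)^2\varepsilon^2) = O(Mh_\ell\varepsilon^2)$ for the cross term. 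Collecting the three contributions proves the stated bound.
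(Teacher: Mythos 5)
Your proof is correct, but it is organized differently from the paper's. The paper does not route this lemma through the three-term decomposition of Lemma \ref{lem:3terms}; instead it writes the difference as the single product $\rho^k(t_n)\cdot(D_{h_\ell}^\varepsilon(t_n^k)-D_{h_\ell}^\varepsilon(t_n))$, with $\rho^k$ the integrated gradient, and invokes the product-variance Lemma \ref{lem:Var_bound}, feeding it $\textsf{Var}([\rho^k(t_n)]_j)=O(\varepsilon^2)$ from Lemma \ref{lem:orderv}, the uniform bound $|\rho^k|\le\sqrt a$, the mean bound $|\E[D_{h_\ell}^\varepsilon(t_n^k)-D_{h_\ell}^\varepsilon(t_n)]|=O(Mh_\ell)$ from Lemma \ref{lem:local_error1}, and $\textsf{Var}([D_{h_\ell}^\varepsilon(t_n^k)-D_{h_\ell}^\varepsilon(t_n)]_j)=O(M^2h_\ell^2\varepsilon^2+Mh_\ell\varepsilon^2)$, this last obtained by exactly the recentering-on-$z_{h_\ell}$ trick you apply to $Q$. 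So the key idea---that the drift-driven contribution must be measured in variance by subtracting the deterministic Euler path \eqref{gen_approx_ode}, which is what removes the otherwise fatal $O(h_{\ell-1}^2)$ term---is identical in both arguments. What the paper's route buys is that Lemma \ref{lem:Var_bound} decouples the two correlated factors using only the almost-sure bound on the gradient factor and second moments; your direct treatment of $A_k=P\cdot Q$ instead needs Cauchy--Schwarz on $\E[|P-\bar P|^2|Q|^2]$ and hence an $L^4$ analogue of Lemma \ref{lem:var_bound}, namely $\E[\sup_{s\le T}|D_{h_\ell}^\varepsilon(s)-z_{h_\ell}(s)|^4]=O(\varepsilon^4)$, which the paper never states. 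That estimate does follow by the Burkholder--Davis--Gundy/Gronwall argument you indicate, with the fourth moments supplied by Lemma \ref{lem:moment_bound}, so this is an additional routine lemma to write out rather than a gap. Your bounds on $B_k$ and $E_k$ are fine and match the estimates the paper derives for those quantities elsewhere (e.g.\ \eqref{eq3214322432}).
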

		
		\begin{proof}
      From Lemma \ref{lem:taylor} in the appendix (Taylor approximation), we have
       \begin{align}
       \mu_i(D_{h_{\ell}}^\varepsilon(t_{n}^k))-\mu_i&(D_{h_{\ell}}^\varepsilon(t_{n})) = \rho^k(t_n) \cdot (D_{h_{\ell}}^\varepsilon(t_{n}^k)-D_{h_{\ell}}^\varepsilon(t_{n})),
              \label{eq:crazy23423}
       \end{align}
       where
       \begin{align*}
       \rho^k(t_n)= \int_0^1 \left[ \nabla \mu_i(D_{h_{\ell}}^\varepsilon(t_{n})) + r(D_{h_{\ell}}^\varepsilon(t_{n}^k)-D_{h_{\ell}}^\varepsilon(t_{n})) \right] dr.
       \end{align*}
          In order to bound the right hand side of \eqref{eq:crazy23423},  we will apply Lemma \ref{lem:Var_bound} in the appendix with $A^{\varepsilon,h_{\ell-1}}=[\rho^k(t_n)]_j$  and $B^{\varepsilon,h_{\ell-1}}=[D_{h_{\ell}}^\varepsilon(t_{n}^k)-D_{h_{\ell}}^\varepsilon(t_{n})]_j$.  Hence, we must find appropriate bounds on these components. 
          
          We begin with   $B^{\varepsilon, h_{\ell-1}}$.  We use Lemmas  \ref{lem:moment_bound} and \ref{lem:var_bound} after iterating \eqref{eq:d11} to find
     \begin{align}
     \begin{split}
           \textsf{Var}&([D_{h_{\ell}}^\varepsilon(t_{n}^k)-D_{h_{\ell}}^\varepsilon(t_{n})]_j)\\
           &\le2\textsf{Var}\left(h_{\ell}\sum_{r=0}^{k-1}\mu_j(D_{h_{\ell}}^\varepsilon(t_{n}^r))\right)+2\textsf{Var}\left(\varepsilon\sqrt{h_{\ell}}\sum_{r=0}^{k-1} \sigma_{j}(D_{h_{\ell}}^\varepsilon(t_{n}^r))W_n^r\right)\\
           &\le2h_{\ell}^2\textsf{Var}\left(\sum_{r=0}^{k-1}(\mu_j(D_{h_{\ell}}^\varepsilon(t_{n}^r))-\mu_j(z_{h_{\ell}}(t_{n}^r)))\right)+2\varepsilon^2h_{\ell}\E\left[ \left|\sum_{r=0}^{k-1} \sigma_{j}(D_{h_{\ell}}^\varepsilon(t_{n}^r))W_n^r\right|^2\right]\\
               &\le C_1M^2h_{\ell}^2\varepsilon^2+C_2Mh_{\ell}\varepsilon^2, 
           \end{split}
           \label{eq:908789}
           \end{align}
 where $C_1$ and $C_2$ are positive constants that only depend on $a,b,T,m,D(0)$.

        Turning to $A^{\varepsilon,h_{\ell-1}}$, we  apply Lemma \ref{lem:orderv} in the appendix with 
  $X_1(s)=D_{h_{\ell}}^\varepsilon(s)$, $X_2(s)=D_{h_{\ell}}^\varepsilon(\eta_{h_{\ell}}(s))$, $x_1(s)=z_{h_\ell}(s)$, $x_2(s)=z_{h_\ell}(\eta_{h_{\ell}}(s))$ and $u(x)=\nabla_j\mu_i(x)$
          to obtain
           \begin{align*}
           \textsf{Var}([\rho^k(t_n)]_j) =\textsf{Var} &\left(\int_0^1 [ \nabla_j \mu_i(D_{h_{\ell}}^\varepsilon(t_{n}) + r(D_{h_{\ell}}^\varepsilon(t_{n}^k)-D_{h_{\ell}}^\varepsilon(t_{n}))) ]dr \right)\le  K\varepsilon^2, 
        \end{align*}
where $K$ is positive constant that only depends on $a,b,T,m,D(0)$.
     
     We may now combine  Lemma \ref{lem:Var_bound} with Lemma \ref{lem:local_error1} to conclude
      \begin{align*}
   \textsf{Var}\left( [\rho^k(t_n)]_j \, \cdot\, [D_{h_{\ell}}^\varepsilon(t_{n}^k)-D_{h_{\ell}}^\varepsilon(t_{n})]_j\right) &\leq \hat{C}KM^2h_{\ell}^2\varepsilon^2  +15a\textsf{Var}([D_{h_{\ell}}^\varepsilon(t_{n}^k)-D_{h_{\ell}}^\varepsilon(t_{n})]_j)\\
   &\hspace{.3in}\leq  (\hat{C}K+15aC_1)M^2h_{\ell}^2\varepsilon^2+15C_2Mh_{\ell}\varepsilon^2\\
   &\hspace{.3in}\leq  \hat{C}_1Mh_{\ell}\varepsilon^2, 
   \end{align*}
  where $\hat{C}_1$ is positive and does not depend on $\varepsilon$ and $h_{\ell}$, and we applied \eqref{eq:908789} in the second inequality.

   Returning to \eqref{eq:crazy23423}, the above allows us to conclude
   \begin{equation*}
   \textsf{Var}(\mu_i(D_{h_{\ell}}^\varepsilon(t_{n}^k))-\mu_i(D_{h_{\ell}}^\varepsilon(t_{n})))\le d^2\hat{C}_1Mh_{\ell}\varepsilon^2. \qedhere
   \end{equation*}
   \end{proof}

		We now turn to the first term of \eqref{eq:AA2}.
\begin{lemma}\label{lemma:4_term}
	\begin{align*}
	\textsf{Var}&\left(  \mu_i(D_{h_{\ell}}^\varepsilon(t_{n}))-\mu_i(D_{h_{\ell}-1}^\varepsilon(t_{n}))\right)\\
	&\le 15ad\sum_{j=1}^d \textsf{Var}([D_{h_{\ell}}^\varepsilon(t_{n})-D_{h_{\ell-1}}^\varepsilon(t_{n})]_j)+K_1 M^2h^2_{\ell}\varepsilon^2+K_2 Mh_{\ell}\varepsilon^6,
            \end{align*}
         where $K_1,K_2$ are positive constants that only depend on $a,b,T,d,m,D(0)$.
 \end{lemma}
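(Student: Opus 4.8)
The plan is to mirror the proof of Lemma \ref{lemma:third_term}, replacing the within-level increment by the cross-level difference of the two coupled processes. First I would apply the Taylor expansion of Lemma \ref{lem:taylor} to write
\[
\mu_i(D_{h_\ell}^\varepsilon(t_n)) - \mu_i(D_{h_{\ell-1}}^\varepsilon(t_n)) = \tilde\rho(t_n)\cdot\left(D_{h_\ell}^\varepsilon(t_n) - D_{h_{\ell-1}}^\varepsilon(t_n)\right),
\]
where $\tilde\rho(t_n) = \int_0^1 \nabla\mu_i\left(D_{h_{\ell-1}}^\varepsilon(t_n) + r(D_{h_\ell}^\varepsilon(t_n) - D_{h_{\ell-1}}^\varepsilon(t_n))\right)dr$. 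Writing the inner product as $\sum_{j=1}^d [\tilde\rho(t_n)]_j B_j$ with $B_j := [D_{h_\ell}^\varepsilon(t_n)-D_{h_{\ell-1}}^\varepsilon(t_n)]_j$, and using $\textsf{Var}(\sum_{j} Y_j) \le d\sum_j \textsf{Var}(Y_j)$, the task reduces to bounding $\textsf{Var}([\tilde\rho(t_n)]_j\, B_j)$ for each $j$, which I would do by applying Lemma \ref{lem:Var_bound} with $A = [\tilde\rho(t_n)]_j$ and $B = B_j$.

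Three ingredients feed into Lemma \ref{lem:Var_bound}. For the coefficient $A$, the running assumption $|\nabla\mu|^2\le a$ yields the pointwise bound $\E[[\tilde\rho(t_n)]_j^2]\le a$, which produces the leading $15ad\sum_j\textsf{Var}(B_j)$ contribution after summation. For the smallness of $\textsf{Var}(A)$, I would invoke Lemma \ref{lem:orderv} with $X_1 = D_{h_\ell}^\varepsilon$, $X_2 = D_{h_{\ell-1}}^\varepsilon$, $x_1 = z_{h_\ell}$, $x_2 = z_{h_{\ell-1}}$ and $u = \nabla_j\mu_i$ to obtain $\textsf{Var}([\tilde\rho(t_n)]_j)\le K\varepsilon^2$. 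Finally, for the factor $B$, Jensen's inequality gives $(\E B_j)^2\le \E[B_j^2]$, and summing over $j$,
\[
\sum_{j=1}^d \E[B_j^2] = \E\left[|D_{h_\ell}^\varepsilon(t_n)-D_{h_{\ell-1}}^\varepsilon(t_n)|^2\right] \le d_1 M^2 h_\ell^2 + d_2\varepsilon^4 M h_\ell
\]
by Lemma \ref{lem:L2 error}, where I have used $h_{\ell-1} = M h_\ell$. Combining these, each of the $\textsf{Var}(A)(\E B)^2$ and $\textsf{Var}(A)\textsf{Var}(B)$ contributions is bounded by $K\varepsilon^2(d_1 M^2 h_\ell^2 + d_2\varepsilon^4 M h_\ell)$, which supplies exactly the $K_1 M^2 h_\ell^2\varepsilon^2 + K_2 M h_\ell\varepsilon^6$ terms. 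Note that, in contrast to Lemma \ref{lemma:third_term}, passing through the second moment via Jensen avoids the need for a separate cross-level weak-error estimate.

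The main obstacle is the middle step, establishing $\textsf{Var}([\tilde\rho(t_n)]_j)\le K\varepsilon^2$. Unlike in Lemma \ref{lemma:third_term}, where $\nabla\mu_i$ is sampled along a single trajectory at two nearby times, here the integrand is evaluated along the segment joining the two coupled processes, which converge to two \emph{distinct} (though $O(h_{\ell-1})$-close) deterministic Euler solutions $z_{h_\ell}$ and $z_{h_{\ell-1}}$. The key point is that Lemma \ref{lem:orderv} accommodates two separate deterministic reference trajectories, and that the $O(\varepsilon^2)$ bound originates from the $O(\varepsilon)$ fluctuation of each process about its deterministic limit, supplied by Lemma \ref{lem:var_bound} applied at \emph{both} levels $\ell$ and $\ell-1$, together with the boundedness of $\nabla^2\mu$. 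This is precisely the mechanism responsible for the favorable $\varepsilon$-scaling throughout the paper, and checking that Lemma \ref{lem:orderv} applies cleanly in this cross-level configuration is the only part of the argument that is not a routine repetition of the earlier reasoning.
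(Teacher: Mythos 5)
Your proposal follows the paper's proof essentially verbatim: the same Taylor expansion with $\tilde\rho(t_n)$, the same application of Lemma \ref{lem:Var_bound} componentwise with $A=[\tilde\rho(t_n)]_j$ and $B=[D_{h_\ell}^\varepsilon(t_n)-D_{h_{\ell-1}}^\varepsilon(t_n)]_j$, the same invocation of Lemma \ref{lem:orderv} with $X_1=D_{h_\ell}^\varepsilon$, $X_2=D_{h_{\ell-1}}^\varepsilon$, $x_1=z_{h_\ell}$, $x_2=z_{h_{\ell-1}}$ to get $\textsf{Var}(A)\le K\varepsilon^2$, and the same use of Lemma \ref{lem:L2 error} (via Jensen) to control $(\E B)^2$, yielding the $K_1M^2h_\ell^2\varepsilon^2+K_2Mh_\ell\varepsilon^6$ terms. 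The argument is correct and matches the paper's.
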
 
 \begin{proof}
 We first  write
		\begin{align*}
			\mu_i(D_{h_{\ell}}^\varepsilon(t_{n}))-\mu_i(D_{h_{\ell}-1}^\varepsilon(t_{n}))= \rho(t_n) \cdot (D_{h_{\ell}}^\varepsilon(t_n) - D_{h_{\ell}-1}^\varepsilon(t_{n})),
		\end{align*}
		where
		\[
			\rho(t_n)= \int_0^1 [  \nabla \mu_i( D_{h_{\ell}-1}^\varepsilon(t_{n}) + r( D_{h_{\ell}}^\varepsilon(t_n) - D_{h_{\ell}-1}^\varepsilon(t_{n}))) ]dr.
		\]
		We will again apply Lemma \ref{lem:Var_bound} to get the necessary bounds.  Therefore, we let
		 $A^{\varepsilon,h}=[\rho(t_n)]_j$ and $B^{\varepsilon,h}=[D_{h_{\ell}}^\varepsilon(t_n) - D_{h_{\ell}-1}^\varepsilon(t_{n})]_j$.
				
        Letting $X_1(s)=D_{h_{\ell}}^\varepsilon(s)$, $X_2(s)=D_{h_{\ell-1}}^\varepsilon(s)$, $x_1(s)=z_{h_{\ell}}(s)$, $x_2(s)=z_{h_{\ell-1}}(s)$ and $u(x)=\nabla_j \mu_i(x)$ for an application of Lemma \ref{lem:orderv},  we have
       \begin{align}
        \textsf{Var}(A^{\varepsilon,h})   &\leq  K\varepsilon^2\notag,
        \end{align}
  for some $K(a,b,T,m,D(0))$, where we recall the running assumption that $|[\nabla \mu_i]_j|^2$ is uniformly bounded by $a$.
 Hence, applying Lemmas \ref{lem:L2 error}  and \ref{lem:Var_bound} we see there are positive constants $K_1,K_2$ depending only on $a, b, T, m, D(0)$, such that,
        \begin{align*}
          \textsf{Var}&([\rho(t_n)]_j([D_{h_{\ell}}^\varepsilon(t_n) - D_{h_{\ell}-1}^\varepsilon(t_{n})]_j)) \\
          &\leq K_1 M^2h^2_{\ell}\varepsilon^2+K_2 Mh_{\ell}\varepsilon^6+15a\textsf{Var}([D_{h_{\ell}}^\varepsilon(t_n) - D_{h_{\ell}-1}^\varepsilon(t_{n})]_j,
        \end{align*}
        and
\begin{align*}
			\textsf{Var}&(\mu_i(D_{h_{\ell}}^\varepsilon(t_n) ) - \mu_i(D_{h_{\ell-1}}^\varepsilon(t_n) )) \\
&\le 15ad\sum_{j=1}^d \textsf{Var}([D_{h_{\ell}}^\varepsilon(t_n)-D_{h_{\ell-1}}^\varepsilon(t_n)]_j)+d^2K_1 M^2h^2_{\ell}\varepsilon^2+d^2K_2 Mh_{\ell}\varepsilon^6.\notag
		\end{align*}
\end{proof}

Finally, we turn to the term \eqref{eq:AA5}.
\begin{lemma}
	\begin{align*}
		\textsf{Cov}\bigg([D_{h_{\ell}}^\varepsilon&(t_{n})-D_{h_{\ell-1}}^\varepsilon(t_{n})]_i,h_{\ell}\sum_{k=0}^{M-1} ( \mu_i(D_{h_{\ell}}^\varepsilon(t_{n}^k))-\mu_i(D_{h_{\ell}}^\varepsilon(t_{n}))) \bigg)\\
	&\le Mh_{\ell}\textsf{Var}([D_{h_{\ell}}^\varepsilon(t_{n})-D_{h_{\ell-1}}^\varepsilon(t_{n})]_i)+K_1M^3h^3_{\ell}\varepsilon^2+K_2M^5h^5_{\ell}\varepsilon^2+K_3M^3h^3_{\ell}\varepsilon^4,
            \end{align*}
 where $K_1,K_2$ and $K_3$ are positive constants that only depend on $a,b,T,m,D(0)$.
 \end{lemma}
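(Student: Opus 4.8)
The plan is to feed the Taylor decomposition of Lemma~\ref{lem:3terms} into the covariance and exploit the adaptedness of the coupling. Writing $\Delta_i \eqdef [D_{h_{\ell}}^\varepsilon(t_{n})-D_{h_{\ell-1}}^\varepsilon(t_{n})]_i$, which is $\mathcal F_{t_n}$-measurable since both processes are driven by the same Brownian increments up to time $t_n$, Lemma~\ref{lem:3terms} gives $\mu_i(D_{h_{\ell}}^\varepsilon(t_{n}^k))-\mu_i(D_{h_{\ell}}^\varepsilon(t_{n}))=A_k+B_k+E_k$, and bilinearity of covariance yields
\[
\textsf{Cov}\Big(\Delta_i,\, h_{\ell}\sum_{k=0}^{M-1}(\mu_i(D_{h_{\ell}}^\varepsilon(t_{n}^k))-\mu_i(D_{h_{\ell}}^\varepsilon(t_{n})))\Big)
= h_\ell\sum_{k=0}^{M-1}\big(\textsf{Cov}(\Delta_i,A_k)+\textsf{Cov}(\Delta_i,B_k)+\textsf{Cov}(\Delta_i,E_k)\big).
\]
I would treat the three pieces separately.

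The heart of the argument is that the dominant piece $B_k$ contributes exactly zero. Recall from \eqref{eq:Bk} that $B_k=\nabla\mu_i(D_{h_{\ell}}^\varepsilon(t_{n}))\cdot\big(\varepsilon\sqrt{h_{\ell}}\sum_{j=0}^{k-1}\sigma(D_{h_{\ell}}^\varepsilon(t_{n}^j))W_n^j\big)$, where the prefactor $\nabla\mu_i(D_{h_{\ell}}^\varepsilon(t_{n}))$ is $\mathcal F_{t_n}$-measurable, each $\sigma(D_{h_{\ell}}^\varepsilon(t_{n}^j))$ is $\mathcal F_{t_n^j}$-measurable, and $W_n^j$ is mean-zero and independent of $\mathcal F_{t_n^j}$. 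Conditioning each summand on $\mathcal F_{t_n^j}$ shows that both $\E[B_k]=0$ and $\E[\Delta_i B_k]=0$, so $\textsf{Cov}(\Delta_i,B_k)=0$. This is precisely the ``analyze the variance directly'' phenomenon: the largest of the three terms (of size $O(\varepsilon\sqrt{Mh_\ell})$) drops out against the $\mathcal F_{t_n}$-measurable factor $\Delta_i$, so only the genuinely higher-order drift remainder $A_k$ and Hessian remainder $E_k$ survive.

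For the surviving terms I would combine Cauchy--Schwarz with the weighted inequality $ab\le\tfrac{\gamma}{2}a^2+\tfrac{1}{2\gamma}b^2$, choosing $\gamma$ of order $Mh_\ell$ so that the coefficient of $\textsf{Var}(\Delta_i)$ is exactly $Mh_\ell$ as required, leaving an error controlled by $\textsf{Var}\!\big(h_\ell\sum_k(A_k+E_k)\big)\le 2h_\ell^2 M\sum_k(\textsf{Var}(A_k)+\textsf{Var}(E_k))$. The main obstacle is obtaining a variance bound on $A_k$ that \emph{retains} a factor $\varepsilon^2$: the crude second-moment bound $\E[|A_k|^2]=O(M^2h_\ell^2)$ would lose the small-noise gain. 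The improvement comes from treating $A_k$ as a product, exactly as in the proof of Lemma~\ref{lemma:third_term}. The gradient integral has variance $O(\varepsilon^2)$ by Lemma~\ref{lem:orderv}, while the drift sum $h_\ell\sum_j\mu(D_{h_{\ell}}^\varepsilon(t_{n}^j))$ has a deterministic leading part (tracking $z_{h_\ell}$) together with a fluctuation of size $O(\varepsilon)$ via Lemma~\ref{lem:var_bound}; feeding these into the product-variance estimate Lemma~\ref{lem:Var_bound} gives $\textsf{Var}(A_k)=O(M^2h_\ell^2\varepsilon^2)$.

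For $E_k$ I would simply reuse the second-moment estimate \eqref{eq3214322432}, namely $\E[|E_k|^2]=O(M^3h_\ell^3\varepsilon^2+M^2h_\ell^2\varepsilon^4)$, so that $\textsf{Var}(E_k)$ obeys the same bound. Substituting the variance estimates for $A_k$ and $E_k$ into $\textsf{Var}\!\big(h_\ell\sum_k(A_k+E_k)\big)$ and then into the weighted bound produces, after dividing by the chosen $\gamma\asymp Mh_\ell$, error terms of the stated orders $M^3h_\ell^3\varepsilon^2$, $M^5h_\ell^5\varepsilon^2$ and $M^3h_\ell^3\varepsilon^4$ (using $Mh_\ell=h_{\ell-1}\le T$ to consolidate any intermediate powers). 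Together with $\textsf{Cov}(\Delta_i,B_k)=0$ this yields the claimed inequality. The only delicate point is the combination of the vanishing of $B_k$ with the sharp, $\varepsilon^2$-carrying bound on $\textsf{Var}(A_k)$; everything else is routine application of the preceding lemmas.
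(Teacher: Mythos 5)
Your proposal is correct and follows essentially the same route as the paper: decompose the increment via Lemma~\ref{lem:3terms}, observe that the covariance with the $B_k$ terms vanishes by the martingale structure, control the remaining covariance by a weighted Cauchy--Schwarz step that isolates $Mh_\ell\,\textsf{Var}(\Delta_i)$, and bound $\textsf{Var}(A_k)=O(M^2h_\ell^2\varepsilon^2)$ through the product-variance Lemma~\ref{lem:Var_bound} together with Lemmas~\ref{lem:orderv} and~\ref{lem:var_bound}, while reusing the second-moment estimate \eqref{eq3214322432} for $E_k$. Your conditioning argument for $\textsf{Cov}(\Delta_i,B_k)=0$ is in fact slightly more explicit than the paper's, which simply asserts that identity.
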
 
 \begin{proof}
 As a result of combining \eqref{eq21321214} in Lemma \ref{lem:3terms}
  with 
 \begin{align*}
 	\textsf{Cov}\left([D_{h_{\ell}}^\varepsilon(t_{n})-D_{h_{\ell-1}}^\varepsilon(t_{n})]_i,h_{\ell}\sum_{k=0}^{M-1} B_k\right)=0,
         \end{align*}
where we recall the definition of $B_k$ in \eqref{eq:Bk}, we have
\begin{align}\label{eqcov}
\begin{split}
\textsf{Cov}\bigg([D_{h_{\ell}}^\varepsilon&(t_{n})-D_{h_{\ell-1}}^\varepsilon(t_{n})]_i,h_{\ell}\sum_{k=0}^{M-1} ( \mu_i(D_{h_{\ell}}^\varepsilon(t_{n}^k))-\mu_i(D_{h_{\ell}}^\varepsilon(t_{n})))\bigg)\\ 
&= \textsf{Cov}\left([D_{h_{\ell}}^\varepsilon(t_{n})-D_{h_{\ell-1}}^\varepsilon(t_{n})]_i,h_{\ell}\sum_{k=0}^{M-1} (A_k+E_k)\right)\\
&\hspace{.2in}+\textsf{Cov}\left([D_{h_{\ell}}^\varepsilon(t_{n})-D_{h_{\ell-1}}^\varepsilon(t_{n})]_i,h_{\ell}\sum_{k=0}^{M-1} B_k\right)\\ 
&\le Mh_{\ell}\textsf{Var}([D_{h_{\ell}}^\varepsilon(t_{n})-D_{h_{\ell-1}}^\varepsilon(t_{n})]_i)+\frac{1}{2}h_{\ell}\sum_{k=0}^{M-1}\textsf{Var}\left(A_k\right)+\frac{1}{2}h_{\ell}\sum_{k=0}^{M-1}\textsf{Var}\left(E_k\right).
\end{split}
            \end{align}
 First we want to estimate $\textsf{Var}\left(A_k\right)$. Applying Lemma \ref{lem:Var_bound} with
        \begin{equation*}
        	A^{\varepsilon,h_{\ell-1}}=\int_0^1 \left[ \nabla _j\mu_i(D_{h_{\ell}}^\varepsilon(t_{n})+ r(D_{h_{\ell}}^\varepsilon(t_{n}^k)-D_{h_{\ell}}^\varepsilon(t_{n}))) \right] dr
	\end{equation*}
   and \begin{equation*}
   	B^{\varepsilon,h_{\ell-1}} =h_{\ell}\sum_{r=0}^{k-1}\mu_j(D_{h_{\ell}}^\varepsilon(t_{n}^r)),
	\end{equation*}
   we can get for some $K_1(a,b,T,m,d,D(0))$ that may change from line to line,
  \begin{align*}
  \quad\textsf{Var}(A_k)&\leq K_1M^2h_{\ell}^2\varepsilon^2  +15ad\sum_{i=1}^{d}\textsf{Var}\left(h_{\ell}\sum_{r=0}^{k-1}\mu_i(D_{h_{\ell}}^\varepsilon(t_{n}^r))\right)\\\
   &\leq  K_1M^2h_{\ell}^2\varepsilon^2+15ad\sum_{i=1}^{d}\E\left[h_{\ell}^2\left(\sum_{r=0}^{k-1}\mu_i(D_{h_{\ell}}^\varepsilon(t_{n}^r))-\mu_j(z_{h_{\ell}}(t_{n}^r))\right)^2\right]\\
   &\leq  K_1M^2h^2_{\ell}\varepsilon^2, 
  \end{align*} 
  where we also use Lemma \ref{lem:var_bound} for the last line.
On the other hand, from \eqref{eq3214322432}
\begin{align*}
  \quad\textsf{Var}(E_k)&\leq \E[|E_k|^2]\le K_2M^4h^4_{\ell}\varepsilon^2+K_3M^2h^2_{\ell}\varepsilon^4.
  \end{align*} 
  Returning to \eqref{eqcov}, we see,
  \begin{align*}
  \textsf{Cov}&\left([D_{h_{\ell}}^\varepsilon(t_{n})-D_{h_{\ell-1}}^\varepsilon(t_{n})]_i,h_{\ell}\sum_{k=0}^{M-1} ( \mu_i(D_{h_{\ell}}^\varepsilon(t_{n}^k))-\mu_i(D_{h_{\ell}}^\varepsilon(t_{n})))\right)\\ \notag
&\le Mh_{\ell}\sum_{j=1}^d\textsf{Var}([D_{h_{\ell}}^\varepsilon(t_{n})-D_{h_{\ell-1}}^\varepsilon(t_{n})]_j)+\frac{1}{2}K_1M^3h^3_{\ell}\varepsilon^2+\frac{1}{2}K_2M^5h^5_{\ell}\varepsilon^2+\frac{1}{2}K_3M^3h^3_{\ell}\varepsilon^4.
            \end{align*}
      \end{proof}	
      
Now we return to \eqref{eq:AA1}--\eqref{eq:AA5} and combine all the estimates above to conclude that there exist $C_1,C_2,$ and $C_3$ which only depend on $a,b,T,m,d,D(0)$ such that
 \begin{align*}
   \textsf{Var}\left( [D_{h_{\ell}}^\varepsilon(t_{n+1})-D_{h_{\ell-1}}^\varepsilon(t_{n+1})]_i \right)   &\le\textsf{Var}[D_{h_{\ell}}^\varepsilon(t_{n})-D_{h_{\ell-1}}^\varepsilon(t_{n})]_i +C_1M^3h_{\ell}^3\varepsilon^2+C_2M^2h_{\ell}^2\varepsilon^4\\
 &\quad   +C_3Mh_{\ell}\sum_{j=1}^d\textsf{Var}([D_{h_{\ell}}^\varepsilon(t_{n})-D_{h_{\ell-1}}^\varepsilon(t_{n})]_j).
     \end{align*}
Therefore,
   \begin{align*}
 &  \max_{i=1,2,\cdots,d}\textsf{Var}\left( [D_{h_{\ell}}^\varepsilon(t_{n+1})-D_{h_{\ell-1}}^\varepsilon(t_{n+1})]_i \right)\\ 
     &\hspace{.5in} \le\max_{i=1,2,\cdots,d}\textsf{Var}\left( [D_{h_{\ell}}^\varepsilon(t_{n})-D_{h_{\ell-1}}^\varepsilon(t_{n})]_i\right) +C_1M^3h_{\ell}^3\varepsilon^2+C_2M^2h_{\ell}^2\varepsilon^4\\
  &\hspace{.8in}  + C_3dMh_{\ell}\max_{i=1,2,\cdots,d}\textsf{Var}\left([D_{h_{\ell}}^\varepsilon(t_{n})-D_{h_{\ell-1}}^\varepsilon(t_{n})]_i\right).
     \end{align*}
     Applying Gronwall's lemma, we obtain,
    \begin{align*}
   &\quad\max_{0\le n \le M^{\ell-1}} \max_{1\le i \le d}\textsf{Var}\left([D_{h_{\ell}}^\varepsilon(t_{n})-D_{h_{\ell-1}}^\varepsilon(t_{n})]_i\right)\le C_1M^2h^2_{\ell}\varepsilon^2+C_2Mh_{\ell}\varepsilon^4, 
     \end{align*}  
where $C_1$ and $C_2$ are some universal constants which only depend on $a,b,T,m,d,D(0)$.

		\vspace{.1in}
		
		We have shown the result under the assumption that $f(x) = x_i$.  
		To show the general case, note that from Lemma \ref{lem:taylor} in the appendix we have
		\begin{align*}
			f(D_{h_{\ell}}^\varepsilon(t_n)) -& f(D_{h_{\ell-1}}^\varepsilon(t_n))) \\
			&= \int_0^1 [\nabla f(D_{h_{\ell}}^\varepsilon(t_n) + r(D_{h_{\ell}}^\varepsilon(t_n)-D_{h_{\ell}}^\varepsilon(t_n)))]dr\cdot (D_{h_{\ell}}^\varepsilon(t_n)-D_{h_{\ell-1}}^\varepsilon(t_n))\notag.
		\end{align*}
          We let $X_1(t)=D_{h_{\ell}}^\varepsilon(t) $, $X_2(t)=D_{h_{\ell-1}}^\varepsilon(t) $, $x_1(t)=z_{h_{\ell}}(t)$, $x_2(t)=z_{h_{\ell-1}}(t)$ and $u(x)= \nabla_j f(x)$ in an application of Lemma \ref{lem:orderv} which yields
          \begin{align*}
        &\textsf{Var}\left(\int_0^1 [ \nabla_jf(D_{h_{\ell}}^\varepsilon(t_n) + r(D_{h_{\ell}}^\varepsilon(t_n)-D_{h_{\ell}}^\varepsilon(t_n))) ] dr\right) \leq  K\varepsilon^2,
        \end{align*}
        where $K$ is a universal constant that depends on $C_L,D,a,b,T,D(0)$.
 Hence, by an application of Lemmas \ref{lem:L2 error} and \ref{lem:Var_bound} and the work above we see,  
        \begin{align*}
          \textsf{Var}&\left(\int_0^1 \nabla_j f(D_{h_{\ell}}^\varepsilon(t_n) + r(D_{h_{\ell}}^\varepsilon(t_n)-D_{h_{\ell}}^\varepsilon(t_n)))dr\cdot [D_{h_{\ell}}^\varepsilon(t_n)-D_{h_{\ell-1}}^\varepsilon(t_n)]_j\right)\\
          &\leq  K(d_1M^2 h_{\ell}^2+d_2M h_{\ell}\varepsilon^4)\varepsilon^2+15dC_L^2\textsf{Var}([D^\varepsilon(s)-D_{h_{\ell}}^\varepsilon(s)]_j )\\
          &\leq  (Kd_1+15dC_L^2C_1)M^2h_{\ell}^2\varepsilon^2+(15dC_L^2C_2+Kd_2)Mh_{\ell}\varepsilon^4.
        \end{align*}
  Thus
        \[
          \textsf{Var}(f(D_{h_{\ell}}^\varepsilon(t_n)) - f(D_{h_{\ell-1}}^\varepsilon(t_n))) \leq d^2(Kd_1+15dC_L^2C_1)M^2h_{\ell}^2\varepsilon^2+d^2(15dC_L^2C_2+Kd_2)h_{\ell}\varepsilon^4,
        \]
	giving the result.
	\end{proof}

\section{Numerical examples and comparison with results related to jump processes}

In this section we provide numerical evidence for the sharpness of both Theorem \ref{thm:var} and the computational complexity analyses provided in sections \ref{sec:2.1} and \ref{subsec:emmc}.  Further, we compare our results to those found in \cite{AHS2014} for scaled Markov processes.

\begin{example}\label{example1}
 We consider the following simple one dimensional model,
\begin{align*}
D^\varepsilon(t)=1-\int_{0}^{t}D^\varepsilon(s)ds+\varepsilon \int_{0}^{t}D^\varepsilon(s)dW(s),
\end{align*}
where we simulate until $T =1$.  

To gather evidence in support of the sharpness of the bound 
 $\textsf{Var}(D_{h_{\ell}}^\varepsilon(t) - D_{h_{\ell-1}}^\varepsilon(t)) = O(h^2 \varepsilon^2 + h \varepsilon^4)$, we fix one of $h$ or $\varepsilon$ in different scaling regimes and vary the other parameter in order to generate log-log plots.    We note that there are four exponents to discover, and so four log-log plots are used.  Note also that $h^2\varepsilon^2$ is the dominant term in $h^2 \varepsilon^2 + h \varepsilon^4$ if and only if $h \ge \varepsilon^2$.
We emphasize that these experiments use extreme parameter choices solely for the purpose
of testing the sharpness of the delicate asymptotic bounds. 

\vspace{.1in}

\noindent \textbf{The exponent of $h$ in $h\varepsilon^4$.}
We fix $\varepsilon = 2^{-6}$ and vary     
\[
	h_{\ell-1}\in\{2^{-13},2^{-14},2^{-15},2^{-16},2^{-17},2^{-18}\}
\]
 to ensure $h_{\ell-1}\le \varepsilon^2$.  As a result, $h_{\ell-1}\varepsilon^4$ is likely to be the dominant  term in \eqref{eq:main}. See Figure \ref{fig:example_simple1}(a), where the log-log  plot is consistent with the functional form
\begin{align*}
			\textsf{Var}(D_{h_{\ell}}^\varepsilon(T) - D_{h_{\ell-1}}^\varepsilon(T)) = O(h_{\ell-1}).
\end{align*}

\begin{figure}
\centering
\subfigure[$\varepsilon = 2^{-6}$ fixed while $h$ is varied.  The best fit curve is $y=1.02x-19.76$.]{\includegraphics[width=0.47\textwidth]{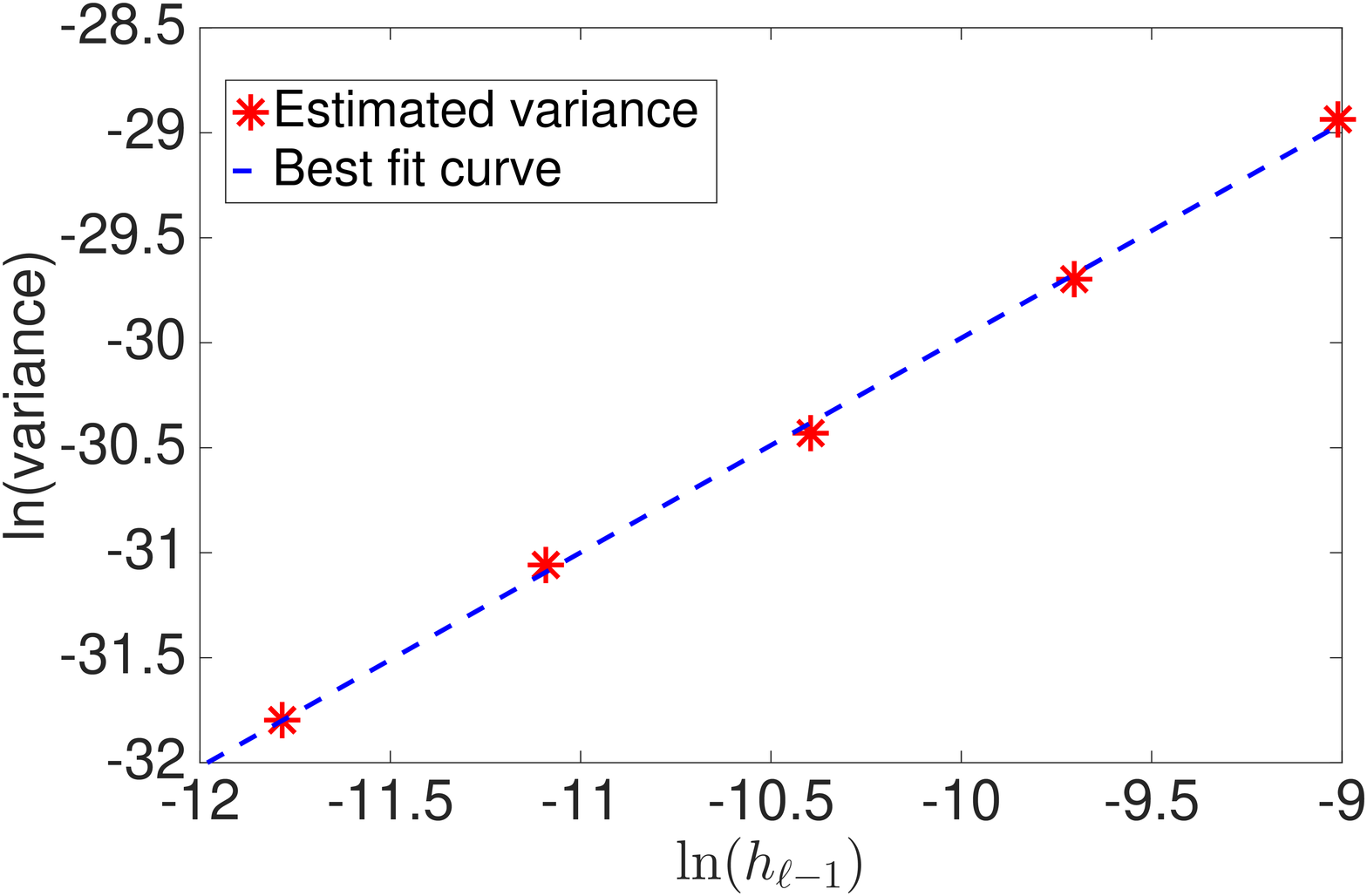}}
\hspace{.2in}
\subfigure[$\varepsilon = 2^{-10}$ fixed while $h$ is varied.  The best fit curve is $y=1.96x-18.86$.]{\includegraphics[width=0.47\textwidth]{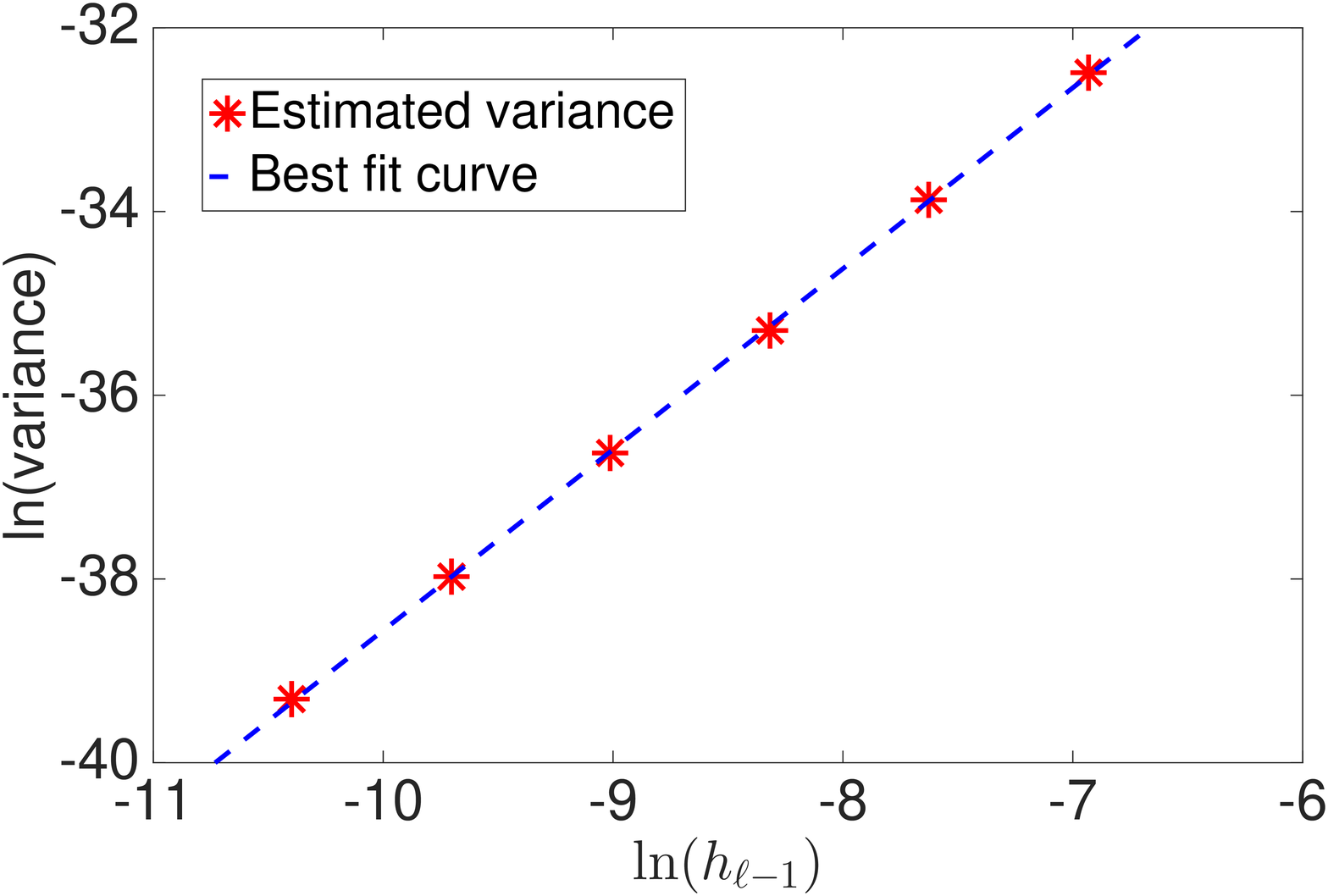}}
	\caption{Log-log plots of $\textsf{Var}(D_{h_{\ell}}^\varepsilon(1) - D_{h_{\ell-1}}^\varepsilon(1))$ with $\varepsilon$  held constant and $h_{\ell-1}$ varied. The best fit curves for all data are overlain in the dashed blue line. Each data point in (a) was generated using 2,000 independent samples and each data point in (b) was generated using 5,000 independent samples.}
	\label{fig:example_simple1}
\end{figure}

\vspace{.1in}

\noindent \textbf{The exponent of $h$ in $h^2\varepsilon^2$.} 
We fix $\varepsilon = 2^{-10}$ and vary   
\[
	h_{\ell-1}\in\{2^{-10},2^{-11},2^{-12},2^{-13},2^{-14},2^{-16}\}
	\]
	 to ensure $h_{\ell-1}\ge \varepsilon^2$.  As a result, $h_{\ell-1}^2 \varepsilon^2$ is likely to be the dominant  term in \eqref{eq:main}. See Figure \ref{fig:example_simple1}(b), where the log-log  plot is consistent with the functional form
\begin{align*}
			\textsf{Var}(D_{h_{\ell}}^\varepsilon(1) - D_{h_{\ell-1}}^\varepsilon(1)) = O(h_{\ell-1}^2).
\end{align*}

\vspace{.1in}

\noindent \textbf{The exponent of $\varepsilon$ in $h\varepsilon^4$.} 
We fix $h_{\ell-1}= 2^{-19}$  and vary   
\[
	\varepsilon \in \{2^{-5},2^{-6},2^{-7},2^{-8},2^{-9}\}
	\]
	 to ensure $h_{\ell-1}\le \varepsilon^2$.  As a result, $h_{\ell-1} \varepsilon^4$ is
likely to be  the dominant  term in \eqref{eq:main}. See Figure \ref{fig:example_simple2}(a), where the log-log  plot is consistent with the functional form
\begin{align*}
			\textsf{Var}(D_{h_{\ell}}^\varepsilon(1) - D_{h_{\ell-1}}^\varepsilon(1)) = O(\varepsilon^4).
\end{align*}

\vspace{.1in}

\noindent \textbf{The exponent of $\varepsilon$ in $h^2\varepsilon^2$.} 
We fix $h_{\ell-1}= 2^{-9}$  and vary   
\[
	\varepsilon \in \{2^{-6},2^{-7},2^{-8},2^{-9},2^{-10},2^{-11}\}
	\]
	 to ensure $h_{\ell-1}\ge \varepsilon^2$.  As a result, $h_{\ell-1}^2 \varepsilon^2$ is 
likely to be the dominant  term in \eqref{eq:main}. See Figure \ref{fig:example_simple2}(b), where the log-log  plot is consistent with the functional form
\begin{align*}
			\textsf{Var}(D_{h_{\ell}}^\varepsilon(1) - D_{h_{\ell-1}}^\varepsilon(1)) = O(\varepsilon^2).
\end{align*}

\begin{figure}
\centering
\subfigure[$h_{\ell-1}= 2^{-19}$ fixed while $\varepsilon$ is varied. The best fit curve is $y=3.96x-16.68$.]{\includegraphics[width=0.47\textwidth]{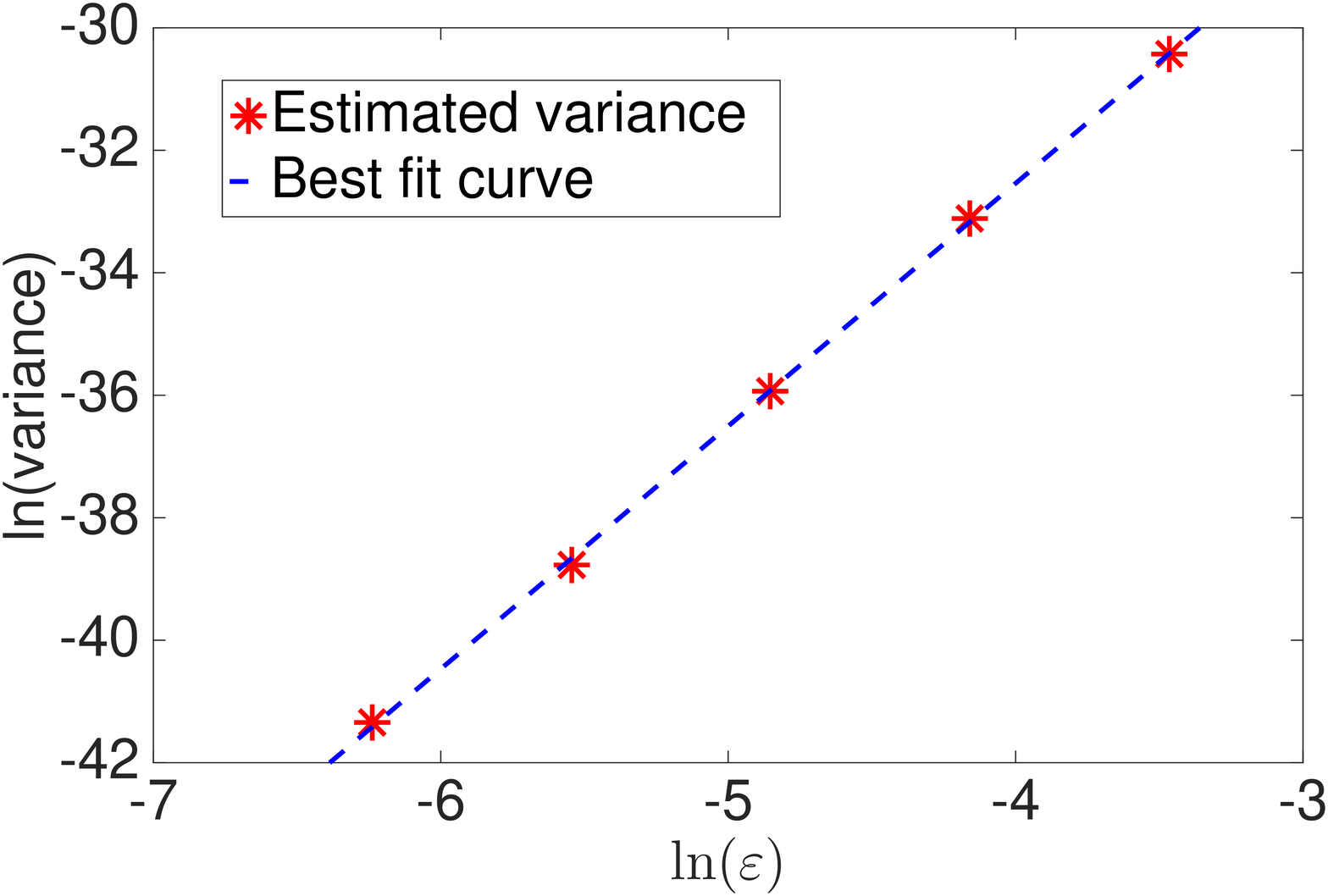}}
\hspace{.2in}
\subfigure[$h_{\ell-1}= 2^{-9}$ fixed while $\varepsilon$ is varied. The best fit curve is $y=2.08x-16.7$.]{\includegraphics[width=0.47\textwidth]{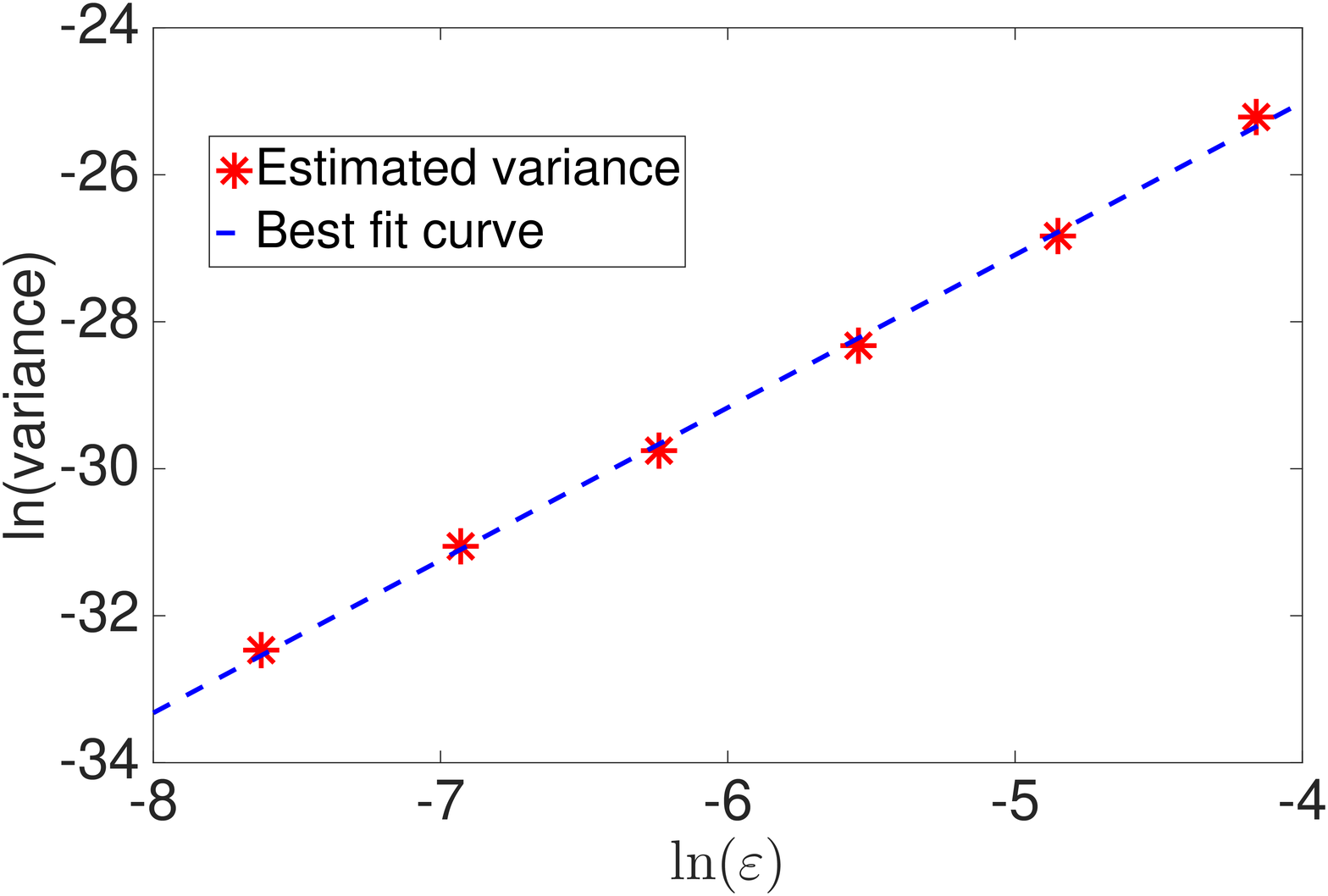}}
	\caption{Log-log plots of $\textsf{Var}(D_{h_{\ell}}^\varepsilon(1) - D_{h_{\ell-1}}^\varepsilon(1))$ with $h_{\ell-1}$ held constant and $\varepsilon$  varied. The best fit curves for all data are overlain in the dashed blue line.  Each data point was generated using 1,000 independent samples.}
	\label{fig:example_simple2}
\end{figure}

We turn to numerically demonstrating our conclusions related to the complexity of Euler based multilevel Monte Carlo and the complexity of Euler based standard Monte Carlo.   We will measure complexity in two ways, by total number of random variables utilized and by required CPU time.   Our implementation of MLMC proceeded as follows.  We chose $h_\ell = 2^{-\ell}$ and for each $\delta>0$ we set $L = \lceil\log(\delta)/\log(2)\rceil$.  
For each level we generated 200 independent sample trajectories in order to estimate $\delta_{\varepsilon, \ell}$, as defined in section \ref{subsec:emmc}.  According to \eqref{eq:nl} and \eqref{eq:09877890} we then selected  
\[
n_{\ell}=\Bigg\lceil\delta^{-2} \sqrt{ \delta_{\varepsilon,\ell} h_{\ell}}\sum_{j = 0}^L\sqrt{ \frac{\delta_{\varepsilon,j}}{h_j}}\Bigg\rceil, \qquad \text{ for } \ell \in\{0,1,2,\dots,L\}.
\]
We implemented Euler's method combined with standard Monte Carlo by selecting the number of paths by
\[
N=\left\lceil\delta^{-2}\textsf{Var}(D_h^\varepsilon(1))\right\rceil
\]
where $h=2^{-L}$ and the parameter $\textsf{Var}(D_h^\varepsilon(1))$ was estimated using 500 independent realizations of the relevant processes.

In  Figures \ref{fig:runtime}(a) and  \ref{fig:random}(a),  we provide log-log plots of runtime (in seconds) and complexity (quantified by the total number of random variables utilized) for our implementation of multilevel and standard Monte Carlo with  $\varepsilon=0.1$ fixed and  
\[
\delta \in \{0.00032, 0.00016, 0.00008, 0.00004, 0.00002\},
\]
which ensures $\delta>\frac{1}{3}e^{-\frac{1}{\varepsilon}}$ (see section \ref{subsec:emmc}). The best fit curves   are consistent with the conclusion that  the computational complexity of the Euler based multilevel Monte Carlo method is $O(\delta^{-2})$ while that of standard Monte Carlo method is $O(\delta^{-3})$ when $\varepsilon$ is fixed. The Monte Carlo estimates which came from these simulations are detailed in Table \ref{table:varydeltaMLMCMasterTable}.
Notice  that $\E [D^\varepsilon(1)] $ can be found explicitly in this case, 
\[
\E [D^\varepsilon(1)]=e^{-1}\approx 0.3678794.
\]
\begin{figure}
\centering
\subfigure[$\varepsilon= 0.1$ held constant and $\delta$  varied.  The best fit lines are  $y=-1.96x-15.65$ for Euler based MLMC
and $y=-2.91x -5.44$ for standard Euler based Monte Carlo.]{\includegraphics[width=0.47\textwidth]{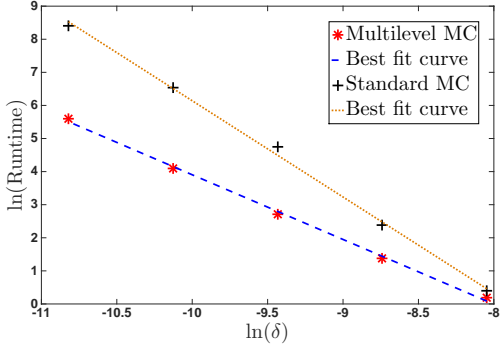}}
\hspace{.2in}
\subfigure[$\delta= 2^{-14}$ held constant and $\varepsilon$  varied.  The best fit lines are $y=2.07x+ 9.18$ for Euler based MLMC
and $y=1.99x+9.58$ for standard Euler based Monte Carlo.]{\includegraphics[width=0.47\textwidth]{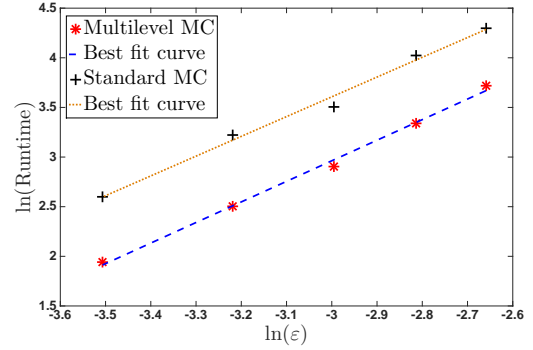}}
	\caption{Log-log plots of runtime (in seconds) for both multilevel and standard Euler based Monte Carlo.}
	\label{fig:runtime}
\end{figure}

\begin{figure}
\centering
\subfigure[$\varepsilon= 0.1$ held constant and $\delta$  varied. The best fit lines are  $y=-2.04x -3.33$ for MLMC
and $y=-2.91x -22.95$ for standard MC.]{\includegraphics[width=0.47\textwidth]{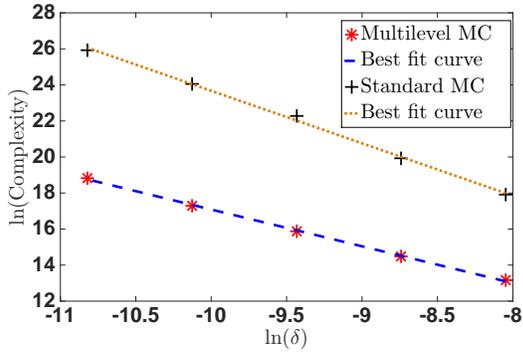}}
\hspace{.2in}
\subfigure[$\delta= 2^{-14}$ held constant and $\varepsilon$  varied.  The best fit lines  are $y=2.06x+  21.13$ for MLMC
and $y=1.99x+27.12$ for standard MC.]{\includegraphics[width=0.47\textwidth]{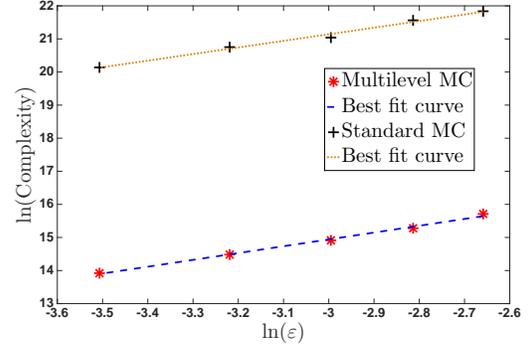}}
	\caption{Log-log plots of computational complexity,  quantified by the number of random variables used.}
\label{fig:random}
\end{figure}

\begin{table}[b]
\centering
  \begin{tabular}{ |c|c|c|c|c|}
    \hline
    $\delta$& Mean--Euler & Mean--MLMC  & SD--Euler & SD--MLMC \\ \hline
    0.00032 & 0.367449 & 0.367944 &$0.000320$  & $ 0.000305$\\ \hline
    0.00016 &0.368028 & 0.367906  & $0.000160$ & $0.000153$ \\ \hline
    0.00008 & 0.367839& 0.367891 &$0.000080$  & $0.000077$\\ \hline
    0.00004 &0.367941 & 0.367863 & $0.000040$& $0.000039$ \\ \hline
    0.00002 &0.367851 & 0.367883 & $0.000020$ & $0.000020$ \\ \hline
  \end{tabular}
  \caption{Result of Euler based multilevel Monte Carlo and Euler based Monte Carlo for fixed  $\varepsilon=0.1$ and varying $\delta$.  The last two columns provide the standard deviations for the two estimators.}
  \label{table:varydeltaMLMCMasterTable}
\end{table}

%
%
%

In  Figure \ref{fig:runtime}(b) and  \ref{fig:random}(b),  we provide similar log-log plots of runtime and computational complexity for Euler based multilevel Monte Carlo and standard Monte Carlo when $\delta=2^{-14}$ is fixed and $\varepsilon$ is varied as 
\[
\varepsilon\in \{0.07, 0.06, 0.05, 0.04, 0.03\},
\]
which ensures $\delta>e^{-\frac{1}{\varepsilon}}$. The best fit curves are again consistent with the conclusion that  the complexity of Euler based multilevel Monte Carlo and standard Monte Carlo Methods  are  both $O(\varepsilon^{2})$ when $\delta$ is fixed. The Monte Carlo estimates which came from these simulations are detailed in Table  \ref{table:varyepsilonMLMCmaster}. \hfill $\square$

\begin{table}
\centering
  \begin{tabular}{ |c|c|c|c|c|}
    \hline
    $\varepsilon$& Mean-Euler & Mean-MLMC & SD-Euler & SD-MLMC \\ \hline
    0.07 & 0.367830&  0.367834 &  $0.000061$  & $0.000059$\\ \hline
    0.06 & 0.367755&  0.367920 & $0.000061$ &  $0.000059$ \\ \hline
    0.05 & 0.367933& 0.367819 &  $0.000061$ & $0.000059$\\ \hline
    0.04 & 0.367809& 0.367856 &  $0.000061$& $0.000059$ \\ \hline
    0.03 & 0.367879& 0.367925 &  $0.000061$ & $0.000059$ \\ \hline
  \end{tabular}
  \caption{Results of Euler based multilevel Monte Carlo and Euler based Monte Carlo  for fixed $\delta=2^{-14}\approx 0.000061$ and varying $\varepsilon$.  The last two columns provide the standard deviations for the two estimators.}
  \label{table:varyepsilonMLMCmaster}
\end{table}

%

\end{example}

\subsection{Comparison with results for continuous time Markov chains}
\label{sec:comparisonCTMC}

Diffusion processes with small noise structures of the form \eqref{eq:2408957248907} often arise as  approximations to  continuous time Markov chains.  Bounds on the variance between Euler approximations to such scaled jump processes can be found in  \cite{AHS2014}.  Since the diffusion approximation is naturally related to the jump process model, it is tempting to believe that the analysis found in \cite{AHS2014} can be utilized to infer the results presented in this paper.  The following example and analysis shows that this intuition is incorrect 

\begin{example}\label{example2}
Consider a family of  continuous time Markov chain models,  parametrized by $N>0$, satisfying
\begin{align}
\begin{split}
	X^N(t) = X^N(0) + &\frac1N Y_1\left( N \int_0^t X_1^N(s)(X_1^N(s)-\tfrac1N) ds\right)\left[ \begin{array}{c} -2\\ 1 \end{array} \right]\\[1ex]
	\hspace{.125in}  + &\frac1N Y_2\left( N \int_0^t X_2^N(s)ds\right)\left[ \begin{array}{c} 2\\ -1 \end{array} \right]
	\end{split}
	\label{eq:9879786}
\end{align} 
with $X^N(0) \in \tfrac1N \Z^2_{\ge 0}$ and $Y_1,Y_2$ independent unit-rate Poisson processes.  The process \eqref{eq:9879786} can model the time evolution of the \textit{reaction network}
\begin{equation}
	2A \overset{1/N}{\underset{1}{\rightleftarrows}} B,
\end{equation}
in which two molecules of species $A$ can combine to form a molecule of species $B$, and vice versa \cite{AndKurtz2011,AK2015}.    The specific choice of scaling in \eqref{eq:9879786} is called the \textit{classical scaling} for biochemical processes \cite{AndKurtz2011,AK2015}.   One representation for the continuous in time Euler-Maruyama approximation of the standard diffusion approximation to the model \eqref{eq:9879786}  is 
\begin{align}
\label{eq:5686875}
\begin{split}
	D_h^{N}(t) &= D_h^N(0) + \int_0^t D_{h,1}^N(\eta_h(s))^2 ds \left[ \begin{array}{c} -2\\ 1 \end{array} \right] + \int_0^t D_{h,2}^N(\eta_h(s)) ds \left[ \begin{array}{c} 2\\ -1 \end{array} \right] \\
	&\hspace{.2in} +\varepsilon_N \int_0^t \sqrt{\max\{D_1^N(\eta_h(s))^2, 0\}} \, dW_1(s) \left[ \begin{array}{c} -2\\ 1 \end{array} \right]\\
	&\hspace{.2in}+ \varepsilon_N \int_0^t \sqrt{\max\{D_2^N(\eta_h(s)), 0\}} \, dW_2(s) \left[ \begin{array}{c} 2\\ -1 \end{array} \right],
	\end{split}
\end{align}
where $\varepsilon_N = \tfrac1{\sqrt{N}}$, $W_1$ and $W_2$ are independent Brownian motions \cite{AndKurtz2011,AK2015}.  
Let $Z_h^N$ be an Euler approximation to \eqref{eq:9879786} and
let $M>0$ be some fixed positive integer.  See \cite{AndersonGangulyKurtz} or \cite{AHS2014} for a stochastic representation of $Z_h^N$ that is similar to \eqref{eq:5686875}, and for the relevant coupling between $Z_{h_\ell}^N$ and $Z_{h_{\ell-1}}^N$.  By Corollary 1 in \cite{AHS2014}, we have that for $h_\ell = M^{-\ell}$ 
\begin{align}\label{eq:7687968769}
	\text{Var}(Z_{h_\ell,1}^N(t) - Z_{h_{\ell-1},1}^N(t)) \le D\cdot  h_{\ell-1} N^{-1} = D\cdot  h_{\ell-1}\varepsilon_N^2,
\end{align}
where the constant $D$  does not depend upon $N, h_\ell$ , or $h_{\ell-1}$.    Conversely, Theorem \ref{thm:var} allows us to conclude that
\begin{align}\label{eq:11111111}
	\text{Var}(D_{h_\ell,1}^N(t) - D_{h_{\ell-1},1}^N(t) ) \le C_1 \cdot h_{\ell-1}^2 \varepsilon_N^2 + C_2\cdot h_{\ell-1}\varepsilon_N^4.
\end{align}
The key feature to note is that for $h_{\ell-1}<1$ and $\varepsilon_N<1$, both of the terms $h_{\ell-1}^2 \varepsilon_N^2$ and $h_{\ell-1}\varepsilon_N^4$ are  dominated by $h_{\ell-1}\varepsilon_N^2$.  In fact,
\[
	\frac{h_{\ell-1}^2 \varepsilon_N^2 + h_{\ell-1}\varepsilon_N^4}{h_{\ell-1}\varepsilon_N^2} = h_{\ell-1} + \varepsilon_N^2,
\]
showing a dramatic reduction in the variance when the coupled diffusion processes are considered as opposed to  the coupled jump processes. 

In order to numerically demonstrate the bounds \eqref{eq:7687968769} and \eqref{eq:11111111}, we follow the numerical analysis performed in Example \ref{example1} by varying $h_\ell$ and $\varepsilon_N = \tfrac{1}{\sqrt{N}}$ in different scaling regimes in order to isolate the different possible exponents.  For each of the numerical experiments performed we fixed a terminal time of $T=0.3$ and took an initial condition of  $(0.2, 0.2)$ for each model.  As we also mentioned in Example \ref{example1}, we emphasize that these experiments use extreme parameter choices solely for the purpose
of testing the sharpness of the delicate asymptotic bounds.

\vspace{.1in}

\noindent \textbf{The exponent of $h_{\ell-1}$ in $h_{\ell-1}\varepsilon_N^4$.} 
We fix $N= 2^{12},$ which corresponds with $\varepsilon_N = 2^{-6}$, and vary     
\[
	h_{\ell-1}\in\{2^{-13},2^{-14},2^{-15},2^{-16}\}
\]
 to ensure $h_{\ell-1}\le \varepsilon_N^2$.  As a result, $h_{\ell-1}\varepsilon_N^4$ is likely to be the dominant  term in \eqref{eq:11111111}. See Figure \ref{fig:example_compare1}(a), where the log-log  plots are consistent with the functional forms
\begin{align*}
			\textsf{Var}(D_{h_{\ell},1}^N(T) - D_{h_{\ell-1},1}^N(T)) = O(h_{\ell-1}), \quad \textsf{Var}(Z_{h_{\ell},1}^N(T) - Z_{h_{\ell-1},1}^N(T)) = O(h_{\ell-1}).
\end{align*}

\begin{figure}
\centering
\subfigure[$\varepsilon_N = 2^{-6}$ fixed while $h_{\ell-1}$ is varied.  The best fit curve for the data associated with the CTMC is $y=1.07x-9.21$, whereas the best fit curve for the data associated with the diffusion  is $y=1.04x-16.92$.]{\includegraphics[width=0.47\textwidth]{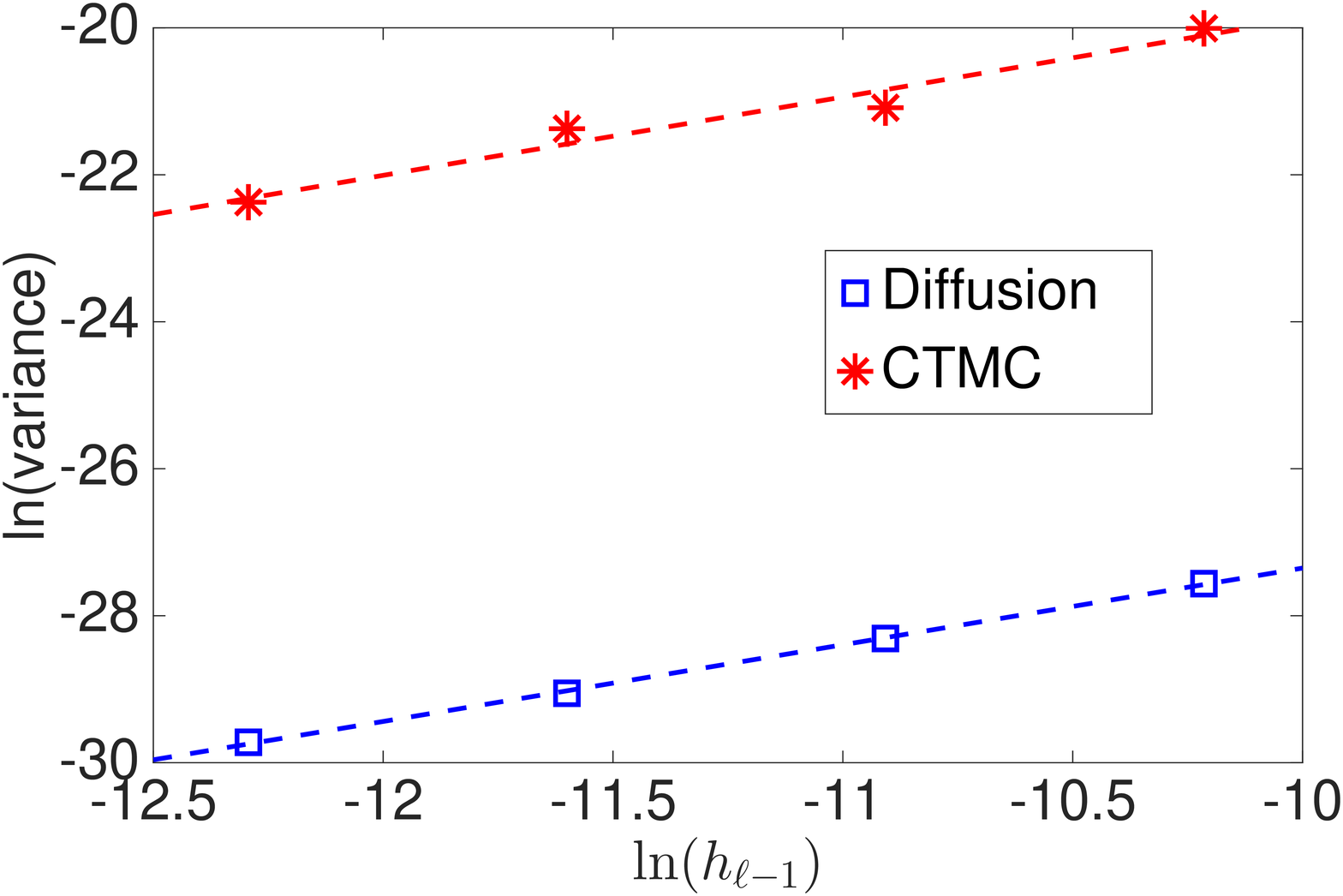}}
\hspace{.2in}
\subfigure[$\varepsilon_N = 2^{-10}$ fixed while $h_{\ell-1}$ is varied.  The best fit curve for the data associated with the CTMC is $y=0.95x-16.82$, whereas the best fit curve for the data associated with the diffusion  is $y=1.98x-15.25$.]{\includegraphics[width=0.47\textwidth]{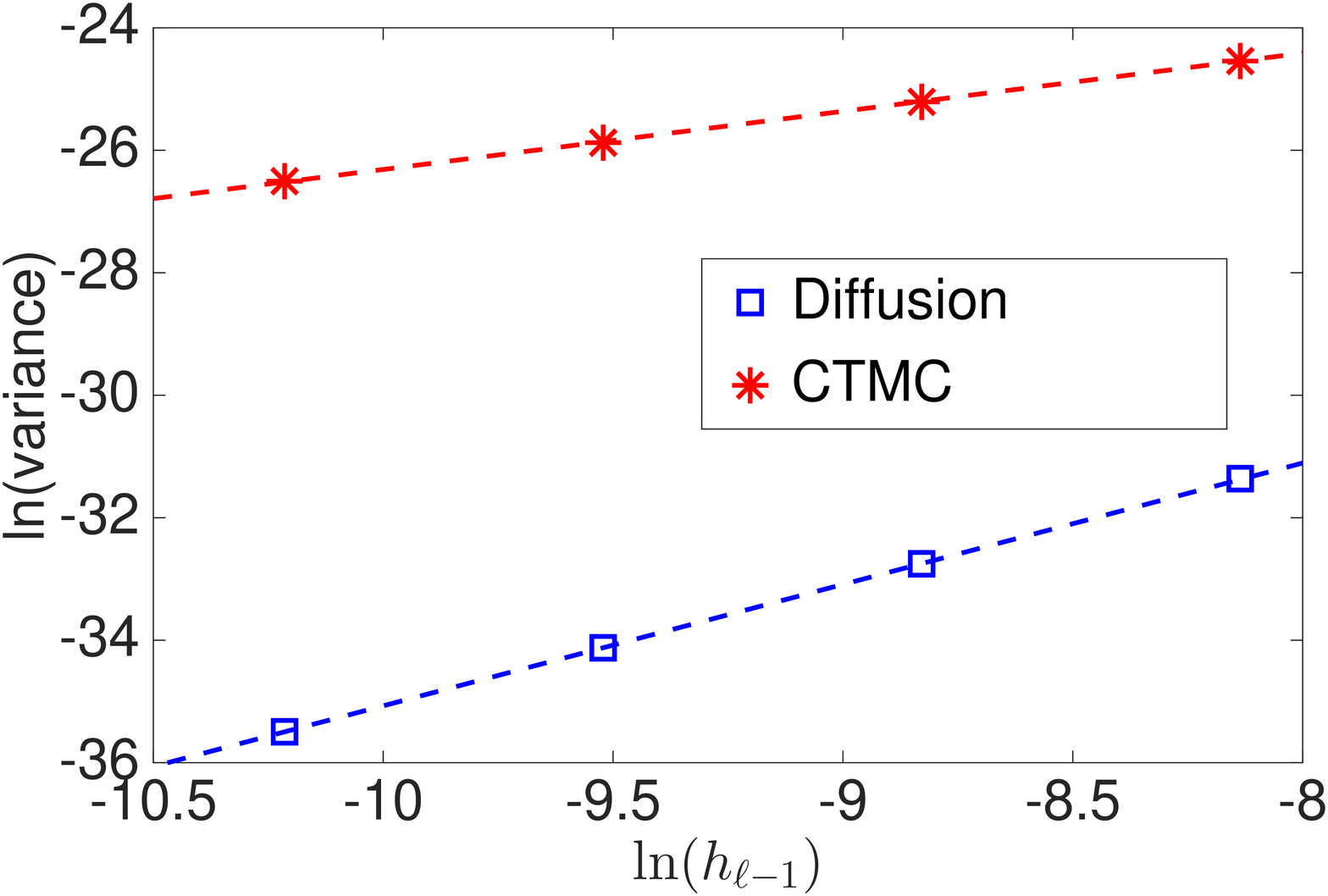}}
	\caption{Log-log plots of $\textsf{Var}(D_{h_{\ell},1}^N(T) - D_{h_{\ell-1},1}^N(T))$ and $\textsf{Var}(Z_{h_{\ell},1}^N(T) - Z_{h_{\ell-1},1}^N(T))$ with $\varepsilon_N$  held constant and $h_{\ell-1}$ varied. The best fit curves for  the data are overlain with  dashed  lines. }
\label{fig:example_compare1}
\end{figure}

\vspace{.1in}

\noindent \textbf{The exponent of $h$ in $h^2\varepsilon^2$.} 
We fix $N=2^{20}$, which corresponds with $\varepsilon_N = 2^{-10}$ and vary   
\[
	h_{\ell-1}\in\{2^{-10},2^{-11},2^{-12},2^{-13}\}
	\]
	 to ensure $h_{\ell-1}\ge \varepsilon_N^2$.  As a result, $h_{\ell-1}^2 \varepsilon_N^2$ is likely to be the dominant  term in \eqref{eq:11111111}. See Figure \ref{fig:example_compare1}(b), where the log-log  plots are consistent with the functional forms
\begin{align*}
			\textsf{Var}(D_{h_{\ell},1}^N(T) - D_{h_{\ell-1},1}^N(T)) = O(h_{\ell-1}^2), \quad \textsf{Var}(Z_{h_{\ell},1}^N(T) - Z_{h_{\ell-1},1}^N(T)) = O(h_{\ell-1}).
\end{align*}

\vspace{.1in}

\noindent \textbf{The exponent of $\varepsilon$ in $h\varepsilon^4$.} 
  We fix $h_{\ell-1}= 2^{-12}$  and vary   
\[
	N^{-1} \in \{2^{-6},2^{-7},2^{-8},2^{-9},2^{-10},2^{-11}\}
	\]
	 to ensure $h_{\ell-1}\le  \varepsilon_N^2=N^{-1}$.  As a result, $h_{\ell-1} \varepsilon_N^4$ is
likely to be  the dominant  term in \eqref{eq:11111111}. See Figure \ref{fig:example_compare2}(a), where the log-log  plot is consistent with the functional form
\begin{align*}
			\textsf{Var}(D_{h_{\ell},1}^N(T) - D_{h_{\ell-1},1}^N(T)) = O(\varepsilon_N^4), \quad \textsf{Var}(Z_{h_{\ell},1}^N(T) - Z_{h_{\ell-1},1}^N(T)) = O(\varepsilon_N^2)
\end{align*}

\vspace{.1in}

\noindent \textbf{The exponent of $\varepsilon$ in $h^2\varepsilon^2$.} 
 We fix $h_{\ell-1}= 2^{-11}$  and vary   
\[
	N \in \{2^{20},2^{21},2^{22},2^{23},2^{24},2^{25}\}
	\]
	 to ensure $h_{\ell-1}\ge \varepsilon_N^2= N^{-1}$.  As a result, $h_{\ell-1}^2 \varepsilon_N^2$ is 
likely to be the dominant  term in \eqref{eq:11111111}. See Figure \ref{fig:example_compare2}(b), where the log-log  plot is consistent with the functional form
\begin{align*}
			\textsf{Var}(D_{h_{\ell},1}^N(T) - D_{h_{\ell-1},1}^N(T)) = O(\varepsilon_N^2),  \quad \textsf{Var}(Z_{h_{\ell},1}^N(T) - Z_{h_{\ell-1},1}^N(T)) = O(\varepsilon_N^2).
\end{align*}

\begin{figure}
\centering
\subfigure[$h_{\ell-1}= 2^{-12}$ fixed while $\varepsilon_N$ is varied. The best fit curve for the data associated with the CTMC is $y=2.14x-11.07$, whereas the best fit curve for the data associated with the diffusion is $y=3.99x-11.02$.]{\includegraphics[width=0.47\textwidth]{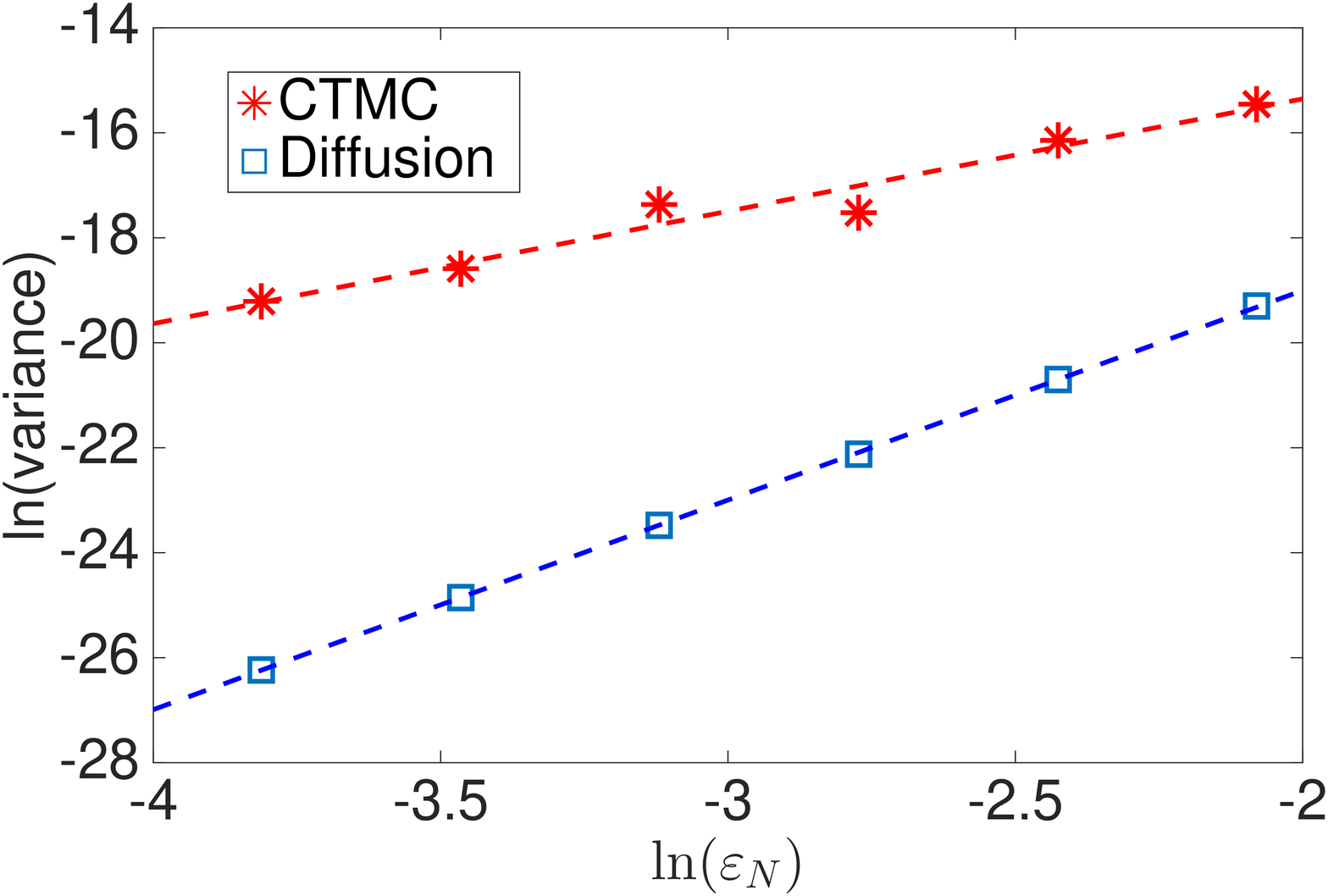}}
\hspace{.2in}
\subfigure[$h_{\ell-1}= 2^{-11}$ fixed while $\varepsilon_N$ is varied. The best fit curve for the data associated with the CTMC is $y=1.99-12.11$, whereas the best fit curve for the data associated with the diffusion is $y=2.00x-20.25$.]{\includegraphics[width=0.47\textwidth]{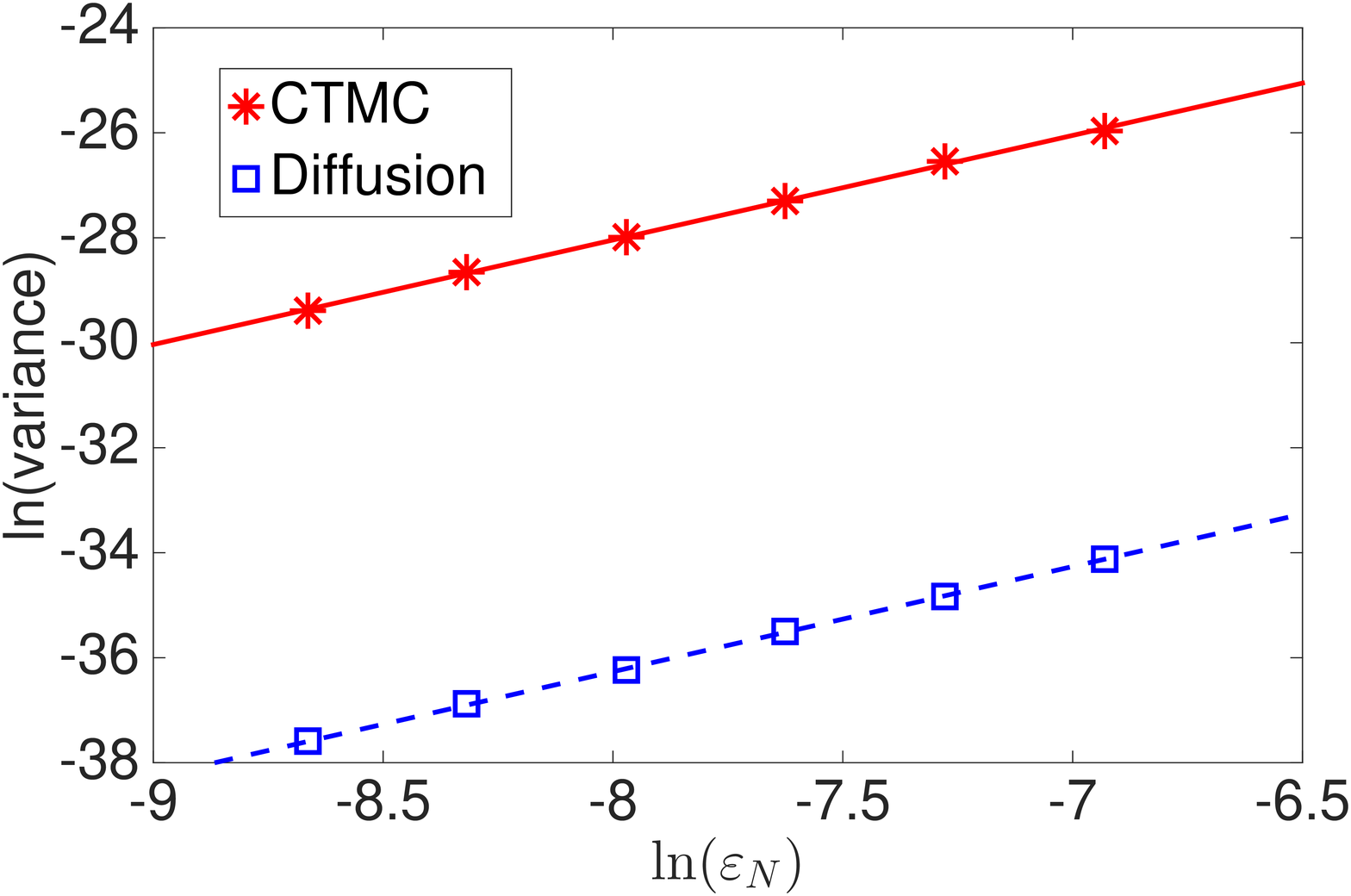}}
	\caption{Log-log plots of $\textsf{Var}(D_{h_{\ell},1}^N(T) - D_{h_{\ell-1},1}^N(T))$ and $\textsf{Var}(Z_{h_{\ell},1}^N(T) - Z_{h_{\ell-1},1}^N(T))$ with $h_{\ell-1}$ held constant and $\varepsilon_N$  varied. The best fit curves for all data are overlain with dashed lines.}
	\label{fig:example_compare2}
\end{figure}

\end{example}

\section{Summary}
\label{sec:summary}

This work focussed on Monte Carlo methods for approximating expectations 
arising from SDEs with small noise.
Our motivation was that for the highly effective 
multilevel approach, the classical strong error measure is less relevant than the variance   
between coupled pairs of paths at different discretization levels.
By analyzing this variance directly,
we showed that when $\delta \le \varepsilon^2$ there is no benefit from using discretization methods that are customized for small noise.  Moreover, so long as we also have $\delta \ge e^{-\frac1\varepsilon}$, a basic Euler--Maruyama discretization 
used in a multilevel setting leads to the same complexity that would arise 
 in the idealized case where we had access
 to exact samples of the
required distribution at a cost of $O(1)$ per sample.  

Interesting future work in this area includes the following.
\begin{enumerate}[(i)]
\item Develop multilevel methods customized to the setting $\delta > \varepsilon^2$.  

\item Investigate whether the recently proposed techniques in \cite{BN14} and \cite{MGY14} can be adapted to the small noise regime.
\end{enumerate}

\appendix
\section{Some Technical Lemmas}
\label{sec:app}

We provide here some technical lemmas which were used in section \ref{sec:res}. 

The following is Lemma 5 in  the appendix of \cite{AHS2014}. 

\begin{lemma}\label{lem:twovar_bound}
 \label{lem:orderv}
Suppose $X_1(s)$ and $X_2(s)$ are  stochastic processes on $\R^d$ and that   $x_1(s)$ and $x_2(s)$ are  deterministic processes on $\R^d$.  Further, suppose that
 \begin{align*}
        \sup_{s \le T} \E\left[ |X_1(s)-x_1(s)|^2\right] \le \widehat{C}_1(T)\varepsilon^2,\quad  \sup_{s \le T} \E \left[ |X_2(s)-x_2(s)|^2\right] \le \widehat{C}_2(T)\varepsilon^2,
 \end{align*}
 for some $\widehat{C}_1,\widehat{C}_2$ depending upon $T$.
   Assume that $u: \R^d \to \R$ is Lipschitz  with Lipschitz constant $C_L$. Then,
		 \[
     \sup_{s \le T}\textsf{Var} \left( \int_0^1  u(X_2(s) + r(X_1(s)-X_2(s)))dr\right)\le C_L^2 \max(\widehat{C}_1,\widehat{C}_2)\varepsilon^2.
		 \]
 \end{lemma}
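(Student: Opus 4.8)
The plan is to control the variance by comparing the random integral against a deterministic surrogate. Writing $Y(s) = \int_0^1 u\big((1-r)X_2(s) + rX_1(s)\big)\,dr$, the first observation is that for \emph{any} deterministic constant $c$ one has $\textsf{Var}(Y(s)) = \E[(Y(s)-\E[Y(s)])^2] \le \E[(Y(s)-c)^2]$, since the mean minimizes the mean-square deviation. I would therefore take $c = c(s) := \int_0^1 u\big((1-r)x_2(s)+rx_1(s)\big)\,dr$, the value produced by feeding the deterministic processes $x_1,x_2$ into the same integral. This reduces the problem to estimating $\E[(Y(s)-c(s))^2]$, in which the randomness enters only through the differences $X_i(s)-x_i(s)$ that the hypotheses control.

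The second step is a pointwise (in $\omega$) estimate of $Y(s)-c(s)$. Applying Jensen's inequality (equivalently Cauchy--Schwarz) to the $r$-integral against the uniform measure on $[0,1]$ gives
\[
(Y(s)-c(s))^2 \le \int_0^1 \big( u((1-r)X_2(s)+rX_1(s)) - u((1-r)x_2(s)+rx_1(s)) \big)^2\,dr.
\]
Invoking the Lipschitz bound on $u$ and then the convexity of $|\cdot|^2$ along the segment parametrized by $r$ yields
\[
(Y(s)-c(s))^2 \le C_L^2 \int_0^1 \big|(1-r)(X_2(s)-x_2(s)) + r(X_1(s)-x_1(s))\big|^2\,dr \le \tfrac{C_L^2}{2}\big(|X_1(s)-x_1(s)|^2 + |X_2(s)-x_2(s)|^2\big),
\]
where the first inequality is the Lipschitz estimate and the second applies $|(1-r)a+rb|^2 \le (1-r)|a|^2 + r|b|^2$ followed by $\int_0^1(1-r)\,dr = \int_0^1 r\,dr = \tfrac12$.

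Taking expectations and using the two hypotheses then gives $\E[(Y(s)-c(s))^2] \le \tfrac{C_L^2}{2}(\widehat{C}_1 + \widehat{C}_2)\varepsilon^2$, and since $\tfrac12(\widehat{C}_1+\widehat{C}_2)\le \max(\widehat{C}_1,\widehat{C}_2)$ this is at most $C_L^2\max(\widehat{C}_1,\widehat{C}_2)\varepsilon^2$; taking the supremum over $s\le T$ finishes the argument. I do not expect a genuine obstacle here: the only real idea is the variance-minimization step that lets one replace the inaccessible mean $\E[Y(s)]$ by the convenient deterministic surrogate $c(s)$, after which Lipschitz continuity and convexity do all the work. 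The one point that requires care is that the convexity inequality is applied with $a = X_2(s)-x_2(s)$ and $b = X_1(s)-x_1(s)$, i.e.\ to the \emph{differences}, rather than to the arguments of $u$ themselves.
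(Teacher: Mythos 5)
Your proof is correct; the paper does not actually reproduce an argument for this lemma (it simply cites Lemma 5 of the appendix of \cite{AHS2014}), and your route --- bounding the variance by the mean-square deviation from the deterministic surrogate $c(s)$, then applying Jensen, the Lipschitz bound, and convexity of $|\cdot|^2$ --- is precisely the standard argument behind that cited result. No gaps to report.
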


The following lemma is only a slight perturbation of Lemma 6 in \cite{AHS2014}.  A proof is
therefore  omitted.
\begin{lemma}\label{lem:Var_bound}
	Suppose that $A^{\varepsilon,h}$ and $B^{\varepsilon,h}$ are families of random variables determined by  scaling parameters $\varepsilon$ and $h$.  Further, suppose that there are $C_1>0, C_2>0$ and $C_3>0$ such that  for all $\varepsilon\in(0,1)$ the following three conditions hold:
	\begin{enumerate}
		\item $\textsf{Var}(A^{\varepsilon,h}) \le C_1\varepsilon^2$ uniformly in $h$.
		\item $|A^{\varepsilon,h}|\le C_2$ uniformly in $h$.
		\item $|\E [B^{\varepsilon,h}]| \le C_3h$.
	\end{enumerate}
	 Then
	\begin{equation*}
		\textsf{Var}(A^{\varepsilon,h} B^{\varepsilon,h}) \le 3C_3^2C_1h^2\varepsilon^2+15C_2^2\textsf{Var}(B^{\varepsilon,h}).
	\end{equation*}
\end{lemma}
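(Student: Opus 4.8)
The plan is to bound the variance of the product by a second moment about a cleverly chosen constant and then split that product into three pieces, each of which is controlled by exactly one of the three hypotheses. Writing $\bar A = \E[A^{\varepsilon,h}]$, $\tilde A = A^{\varepsilon,h} - \bar A$, $\bar B = \E[B^{\varepsilon,h}]$, and $\tilde B = B^{\varepsilon,h} - \bar B$, I would first record that, since the variance of a random variable is the smallest of all second moments taken about a constant, the choice $c = \bar A\bar B$ gives
\[
\textsf{Var}(A^{\varepsilon,h} B^{\varepsilon,h}) \le \E\!\left[(A^{\varepsilon,h} B^{\varepsilon,h} - \bar A\bar B)^2\right].
\]
This is the only nonalgebraic input, and it is precisely what lets the mean $\bar B$ (which is small, of order $h$) enter the estimate.

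Next I would expand $A^{\varepsilon,h} B^{\varepsilon,h} - \bar A\bar B = \tilde A\bar B + \tilde A\tilde B + \bar A\tilde B$ and apply $(x+y+z)^2 \le 3(x^2+y^2+z^2)$ to obtain
\[
\textsf{Var}(A^{\varepsilon,h} B^{\varepsilon,h}) \le 3\,\E[\tilde A^2]\,\bar B^2 + 3\,\E[\tilde A^2\tilde B^2] + 3\,\bar A^2\,\E[\tilde B^2].
\]
The first term is matched to hypotheses (1) and (3): $\E[\tilde A^2] = \textsf{Var}(A^{\varepsilon,h}) \le C_1\varepsilon^2$ and $\bar B^2 \le C_3^2 h^2$, producing the contribution $3C_3^2 C_1 h^2\varepsilon^2$. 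The third term is matched to hypothesis (2): $|A^{\varepsilon,h}| \le C_2$ forces $\bar A^2 = (\E[A^{\varepsilon,h}])^2 \le C_2^2$, while $\E[\tilde B^2] = \textsf{Var}(B^{\varepsilon,h})$, giving $3C_2^2\,\textsf{Var}(B^{\varepsilon,h})$.

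The step that deserves the most care — and the reason the pointwise bound in hypothesis (2), not merely a bound on $\textsf{Var}(A^{\varepsilon,h})$, is indispensable — is the mixed term $\E[\tilde A^2\tilde B^2]$. Since $B^{\varepsilon,h}$ is \emph{not} assumed bounded, I must never let an unpaired factor of $B^2$ survive; the fix is to use $|\tilde A| = |A^{\varepsilon,h} - \bar A| \le 2C_2$ to extract $\tilde A^2 \le 4C_2^2$ from the expectation, leaving $\E[\tilde A^2\tilde B^2] \le 4C_2^2\,\E[\tilde B^2] = 4C_2^2\,\textsf{Var}(B^{\varepsilon,h})$. Adding the two $\textsf{Var}(B^{\varepsilon,h})$ contributions yields $3(4+1)C_2^2 = 15C_2^2$, and combining with the first term gives exactly the claimed inequality. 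The genuine conceptual point, rather than any computational difficulty, is the realization that every occurrence of $B^{\varepsilon,h}$ must be routed through its mean $\bar B$ or its fluctuation $\tilde B$: the fluctuation is always multiplied by a bounded quantity (either $\bar A$ or $\tilde A$) so that it yields $\textsf{Var}(B^{\varepsilon,h})$, while the mean $\bar B$ appears only alongside the small fluctuation $\tilde A$, which yields the $h^2\varepsilon^2$ term.
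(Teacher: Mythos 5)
Your proof is correct: the decomposition $A^{\varepsilon,h}B^{\varepsilon,h}-\bar A\bar B=\tilde A\bar B+\tilde A\tilde B+\bar A\tilde B$, the bound $\textsf{Var}(A^{\varepsilon,h}B^{\varepsilon,h})\le\E[(A^{\varepsilon,h}B^{\varepsilon,h}-\bar A\bar B)^2]$, and the estimates $\tilde A^2\le 4C_2^2$, $\bar A^2\le C_2^2$ reproduce exactly the constants $3C_3^2C_1$ and $3(4+1)C_2^2=15C_2^2$ in the statement. The paper itself omits the proof, deferring to Lemma 6 of \cite{AHS2014}, and your argument is precisely the standard one used there, so there is nothing further to compare.
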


The following lemma is standard, but is included for completeness.
 \begin{lemma}\label{lem:taylor}
 	Let $f:\R^d \to \R$ have continuous first derivative.  Then, for any $x,y\in \R^d$,
	\[
		f(x) = f(y) + \int_0^1 \nabla f(sx + (1-s)y) ds \cdot (x-y).
	\]
 \end{lemma}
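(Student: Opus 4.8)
The plan is to reduce this multivariate identity to the classical one-dimensional fundamental theorem of calculus by restricting $f$ to the line segment joining $y$ and $x$. First I would introduce the auxiliary scalar function $g:[0,1]\to\R$ defined by $g(s) = f(sx + (1-s)y)$, which parametrizes the values of $f$ along the segment and satisfies the endpoint identities $g(0) = f(y)$ and $g(1) = f(x)$. Since $f$ has a continuous first derivative and the map $s \mapsto sx + (1-s)y$ is affine (hence smooth), the composition $g$ is continuously differentiable on $[0,1]$.

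Next I would compute $g'(s)$ via the chain rule. Writing the curve as $\gamma(s) = sx + (1-s)y$, so that $\gamma'(s) = x - y$, the chain rule gives $g'(s) = \nabla f(\gamma(s)) \cdot (x-y)$, where the dot denotes the Euclidean inner product on $\R^d$. The assumed continuity of $\nabla f$ ensures that $g'$ is continuous, so $g$ is genuinely $C^1$ and the one-dimensional fundamental theorem of calculus applies. Integrating $g'$ over $[0,1]$ then yields
\[
	f(x) - f(y) = g(1) - g(0) = \int_0^1 g'(s)\, ds = \int_0^1 \nabla f(sx + (1-s)y)\cdot (x-y)\, ds.
\]

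Finally, because the vector $x-y$ does not depend on the integration variable $s$, it can be factored out of the integral to give $f(x) = f(y) + \left(\int_0^1 \nabla f(sx + (1-s)y)\, ds\right)\cdot (x-y)$, which is precisely the claimed identity. There is no substantive obstacle here, as the result is entirely standard; the only points requiring (minimal) care are verifying that the regularity hypothesis on $f$ legitimizes both the chain rule and the fundamental theorem of calculus, and observing that pulling the constant vector $x-y$ out of the scalar integral is valid exactly because it is independent of $s$.
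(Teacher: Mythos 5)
Your proof is correct: the paper states this lemma without proof (noting only that it is standard), and your argument via the auxiliary function $g(s)=f(sx+(1-s)y)$, the chain rule, and the one-dimensional fundamental theorem of calculus is exactly the canonical derivation the authors have in mind. No gaps.
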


\bibliographystyle{siam}
\bibliography{SmallNoiseMLMC,SmallNoiserefs}

\def\cprime{$'$}
\begin{thebibliography}{10}

\bibitem{AndersonGangulyKurtz}
{\sc D.~F. Anderson, A.~Ganguly, and T.~G. Kurtz}, {\em Error analysis of
  tau-leap simulation methods}, Annals of Applied Probability, 21 (2011),
  pp.~2226--2262.

\bibitem{AHS2014}
{\sc D.~F. Anderson, D.~J. Higham, and Y.~Sun}, {\em Complexity of multilevel
  {M}onte {C}arlo tau-leaping}, SIAM J. Numer. Anal., 52 (2014),
  pp.~3106--3127.

\bibitem{AndKurtz2011}
{\sc D.~F. Anderson and T.~G. Kurtz}, {\em Continuous time {M}arkov chain
  models for chemical reaction networks}, in Design and Analysis of
  Biomolecular Circuits: Engineering Approaches to Systems and Synthetic
  Biology, H.~Koeppl, D.~Densmore, G.~Setti, and M.~di~Bernardo, eds.,
  Springer, 2011, pp.~3--42.

\bibitem{AK2015}
\leavevmode\vrule height 2pt depth -1.6pt width 23pt, {\em Stochastic analysis
  of biochemical systems}, Springer International Publishing, 2015.

\bibitem{BBRT13}
{\sc X.~Bardina, D.~Bascompte, C.~Rovira, and S.~Tindel}, {\em An analysis of a
  stochastic model for bacteriophage systems}, Mathematical Biosciences, 241
  (2013), pp.~99--108.

\bibitem{BN14}
{\sc D.~Belomestny and T.~Nagapetyan}, {\em Variance reduced multilevel path
  simulation: going beyond the complexity $\varepsilon^{-2}$}.
\newblock arXiv:1412.4045, 2014.

\bibitem{CMD15}
{\sc G.~Conforti, S.~D. Marco, and J.-D. Deuschel}, {\em On small-noise
  equations with degenerate limiting system arising from volatility models}, in
  Large Deviations and Asymptotic Methods in Finance, Springer Proceedings in
  Mathematics and Statistics, P.~K. Friz, J.~Gatheral, A.~Gulisashvili,
  A.~Jacquier, and J.~Teichmann, eds., vol.~110 of Proceedings in Mathematics
  and Statistics, Springer, 2015.

\bibitem{DSGVD14}
{\sc S.~Delong, Y.~Sun, B.~E. Griffith, E.~Vanden-Eijnden, and A.~Donev}, {\em
  Multiscale temporal integrators for fluctuating hydrodynamics}, Phys. Rev. E,
  90 (2014), p.~063312.

\bibitem{denk2007modelling}
{\sc G.~Denk and R.~Winkler}, {\em Modelling and simulation of transient noise
  in circuit simulation}, Mathematical and Computer Modelling of Dynamical
  Systems, 13 (2007), pp.~383--394.

\bibitem{Giles2008}
{\sc M.~Giles}, {\em Multilevel {M}onte {C}arlo path simulation}, Operations
  Research, 56 (2008), pp.~607--617.

\bibitem{G91}
{\sc D.~T. Gillespie}, {\em {M}arkov {P}rocesses: {A}n {I}ntroduction for
  {P}hysical {S}cientists}, Academic Press, San Diego, 1991.

\bibitem{G2000}
\leavevmode\vrule height 2pt depth -1.6pt width 23pt, {\em The chemical
  {L}angevin equation}, J. Chem. Phys., 113 (2000), pp.~297--306.

\bibitem{HJK13}
{\sc M.~Hutzenthaler, A.~Jenstzen, and P.~E. Kloeden}, {\em Divergence of the
  multilevel {M}onte {Carlo} method for nonlinear stochastic differential
  equations}, Annals of Applied Probability, 23 (2013), pp.~1913--1966.

\bibitem{hutzenthaler2012strong}
{\sc M.~Hutzenthaler, A.~Jentzen, and P.~E. Kloeden}, {\em Strong convergence
  of an explicit numerical method for {SDE}s with nonglobally {L}ipschitz
  continuous coefficients}, The Annals of Applied Probability, 22 (2012),
  pp.~1611--1641.

\bibitem{IDW98}
{\sc G.~G. Iz\'us, R.~R. Deza, and H.~S. Wio}, {\em Exact nonequilibrium
  potential for the {F}itzhugh-{N}agumo model in the excitable and bistable
  regimes}, Phys. Rev. E, 58 (1998), pp.~93--98.

\bibitem{KloedenPlaten92}
{\sc P.~E. Kloeden and E.~Platen}, {\em Numerical Solution of Stochastic
  Differential Equations}, vol.~23 of Applications of Mathematics (New York),
  Springer-Verlag, Berlin, 1992.

\bibitem{mao1997stochastic}
{\sc X.~Mao}, {\em Stochastic Differential Equations and Their Applications},
  Horwood Publishing Ltd., Chichester, 1997.

\bibitem{MMR02}
{\sc X.~Mao, G.~Marion, and E.~Renshaw}, {\em Environmental noise suppresses
  explosion in population dynamics}, Stochastic Process Appl., 97 (2002),
  pp.~96--110.

\bibitem{MT97}
{\sc G.~N. Milstein and M.~V. Tretyakov}, {\em Mean-square numerical methods
  for stochastic differential equations with small noises}, SIAM J. Sci.
  Comput., 18 (1997), pp.~1067--1087.

\bibitem{milstein1997numerical}
\leavevmode\vrule height 2pt depth -1.6pt width 23pt, {\em Numerical methods in
  the weak sense for stochastic differential equations with small noise}, SIAM
  Journal on Numerical Analysis, 34 (1997), pp.~2142--2167.

\bibitem{MT04}
{\sc G.~N. Milstein and M.~V. Tretyakov}, {\em {S}tochastic {N}umerics for
  {M}athematical {P}hysics}, Springer-Verlag, Berlin, 2004.

\bibitem{MGY14}
{\sc T.~M{\"u}ller-Gronbach and L.~Yaroslavtseva}, {\em Deterministic
  quadrature formulas for {SDE}s based on simplified weak {I}to-{T}aylor
  steps}, Tech. Rep. 167, DFG-Schwerpunktprogramm 1324, 2014.

\bibitem{OTN11}
{\sc J.~Otwinowski, S.~Tanase-Nicola, and I.~Nemenman}, {\em Speeding up
  evolutionary search by small fitness fluctuations}, Journal of Statistical
  Physics, 144 (2011), pp.~367--378.

\bibitem{PS07}
{\sc G.~Pavliotis and A.~Stuart}, {\em Parameter estimation for multiscale
  diffusions}, Journal of Statistical Physics, 127 (2007), pp.~741--781.

\bibitem{PRJ01}
{\sc A.~Pikovsky, M.~Rosenblum, and J.~Kurths}, {\em Synchronization: A
  Universal Concept in Nonlinear Sciences}, Cambridge University Press,
  Cambridge, 2001.

\bibitem{SM13}
{\sc B.~A. Schmerl and M.~D. McDonnell}, {\em Channel-noise-induced stochastic
  facilitation in an auditory brainstem neuron model}, Phys. Rev. E, 88 (2013),
  p.~052722.

\bibitem{1236910}
{\sc A.~Shishkin and D.~Postnov}, {\em Stochastic dynamics of
  {F}itzhugh-{N}agumo model near the canard explosion}, in Physics and Control,
  2003. Proceedings, vol.~2, 2003, pp.~649--653 vol.2.

\bibitem{SMP10}
{\sc M.~Sieber, H.~Malchow, and S.~V. Petrovskii}, {\em Noise-induced
  suppression of periodic travelling waves in oscillatory reaction-diffusion
  systems}, Proc. R. Soc. A,  (2010), pp.~1903--1917.

\bibitem{TRW03}
{\sc H.~C. Tuckwell, R.~Rodriguez, and F.~Y.~M. Wan}, {\em Determination of
  firing times for the stochastic {F}itzhugh-{N}agumo neuronal model}, Neural
  Computation, 15 (2003), pp.~143--159.

\bibitem{zhmysaha05}
{\sc L.~Zhang, P.~A. Mykland, and Y.~A\"{i}t-Sahalia}, {\em A tale of two time
  scales: {D}etermining integrated volatility with noisy high-frequency data},
  Journal of the American Statistical Association, 100 (2005), pp.~1394--1411.

\end{thebibliography}

\end{document}